\documentclass[12pt, a4 paper]{amsart}
\usepackage{amssymb,amsmath,amsthm,amsfonts,amscd}
\usepackage{xcolor}
\usepackage{tikz}
\usepackage{fix-cm}
\usepackage{datetime}
\usepackage{comment}

\usepackage[colorlinks,citecolor=blue]{hyperref}

\newtheorem{theorem}{Theorem}[section]
\newtheorem{proposition}[theorem]{Proposition}
\newtheorem{lemma}[theorem]{Lemma}
\newtheorem{corollary}[theorem]{Corollary}
\theoremstyle{definition}
\newtheorem{definition}[theorem]{Definition}
\newtheorem{example}[theorem]{Example}

\newtheorem{remark}[theorem]{Remark}
\newtheorem{question}[theorem]{Question}

\def \C{\mathbb{C}}
\def \T{\mathbb{T}}

\def \N{\mathbb{N}}
\newcommand{\clb}{\mathcal{B}}

\newcommand{\cle}{\mathcal{E}}

\newcommand{\clk}{\mathcal{K}}

\newcommand{\clf}{\mathcal{F}}

\newcommand{\na}{\mathcal{N}}
\newcommand{\ma}{\mathcal{M}}
\newcommand{\an}{\mathcal{AN}}
\newcommand{\am}{\mathcal{AM}}
\newcommand{\es}{\sigma_{\text{ess}}}
\newcommand{\vp}{\varphi}
\newcommand{\red}{\mathcal{R}}
\newcommand\restr[2]{\ensuremath{\left.#1\right|_{#2}}}

\numberwithin{equation}{section}

\subjclass[2020]{47A10, 47A15, 47A58, 47B07}

\keywords{Reducing subspace, minimum attaining operator, orthogonal projection, partial isometry, Toeplitz operator, von Neumann algebra}

\author{Puspendu Nag}
\address{Department of Mathematics, IIT Hyderabad, 
Kandi, Sangareddy, Telangana, India, 502\,284.}
\email{ma23resch11003@iith.ac.in; puspomath@gmail.com}

\author{Ramesh Golla}
\address{Department of Mathematics, IIT Hyderabad, 
Kandi, Sangareddy, Telangana, India, 502\,284.}
\email{rameshg@math.iith.ac.in}

\date{\currenttime ;  \today}

\begin{document}
\title[Minimum Attaining Operators on Reducing Subspaces]{Minimum Attaining Operators on Reducing Subspaces: Spectral Structure and Density}

\maketitle
\begin{abstract}
In this article, we introduce and investigate a new subclass
$\mathcal{M}_r(H)$ of minimum attaining operators on a separable
Hilbert space $H$. This class contains the absolutely minimum attaining
operators and is properly contained in the class of minimum attaining
operators. We establish several structural and spectral
characterizations of operators in $\mathcal{M}_r(H)$. In particular, we
characterize positive operators in $\mathcal{M}_r(H)$ in terms of their
spectral representations. We prove that $\mathcal{M}_r(H)$ is dense in
$\mathcal{B}(H)$ in the operator norm and, moreover, that the operators
in $\mathcal{M}_r(H)$ having a nontrivial invariant half-space are also
dense in $\mathcal{B}(H)$ in the operator norm. We further obtain a
representation theorem for normal operators in $\mathcal{M}_r(H)$ and
establish additional structural properties of this class.
\end{abstract}
\section{Introduction}

The study of extremal properties of bounded linear operators is a
fundamental theme in functional analysis and operator theory. A
classical example is norm attainment, which concerns whether an
operator attains its norm on the unit sphere. The corresponding
lower-extremal notion is obtained through the minimum modulus. For
$T\in\mathcal B(H_1,H_2)$, the minimum modulus of $T$ is defined by
\begin{equation}
    m(T):=\inf\{\|Tx\|:x\in S_{H_1}\},
\end{equation}
where $S_{H_1}$ denotes the unit sphere of the Hilbert space $H_1$. The minimum modulus
is closely related with bounded below operators, closed range,
invertibility, and the spectral properties of the positive operator $T^*T$. A systematic study of the minimum modulus and its role in spectral theory was carried out by Gindler and Taylor \cite{GindlerTaylor}.

An operator $T\in\mathcal B(H_1,H_2)$ is called \emph{minimum
attaining} if there exists $x_0\in S_{H_1}$ such that $$\|Tx_0\|=m(T).$$

The systematic study of minimum attaining operators was initiated by
Carvajal and Neves \cite{Carvajal 2}, who also introduced the corresponding
absolute version of minimum attainment. More precisely, an operator
$T\in\mathcal B(H_1,H_2)$ is called \emph{absolutely minimum attaining},
or an $\am$-operator, if $T|_M : M\longrightarrow H_2$ is minimum attaining for every nonzero closed subspace $M$ of $H_1$. If $\mathcal M(H)$ denotes the class of minimum attaining operators on
$H$, then $\mathcal{AM}(H)\subset \mathcal M(H)$.
The $\am$ property is substantially stronger than ordinary minimum
attainment, since it requires minimum attainment to persist after
restriction to every nonzero closed subspace. This condition has led
to strong spectral and structural results, particularly for positive
operators; see \cite{AM Cha,NBGRAM}. The absolute minimum attaining property
has also been investigated for operator matrices; see \cite{Block AN 2026}.

The requirement in the definition of an $\am$-operator is imposed on
all nonzero closed subspaces. From the operator theoretic point of
view, however, not every closed subspace is naturally associated with
the operator. A more intrinsic family of subspaces is provided by the
\emph{reducing subspaces}. Recall that a closed subspace $M$ of $H$
reduces $T\in\mathcal B(H)$ if both $M$ and $M^\perp$ are invariant for
$T$. Equivalently, if $P_M$ denotes the orthogonal projection onto $M$, then $P_M$ commutes with $T$. In this case, $$H=M\oplus M^\perp
    \qquad\text{and}\qquad
    T=T|_M\oplus T|_{M^\perp}.$$
Thus reducing subspaces correspond to genuine operator decompositions
and are closely tied to the spectral structure of the operator.

The idea of imposing an attainment condition only on reducing
subspaces has already appeared in the theory of norm attaining
operators. In \cite{betaclass}, a subclass of norm attaining operators, referred to as the $\beta$-class, was introduced by requiring the restriction of an operator to every
reducing subspace to attain its norm. Subsequent investigations of
this class can be found in \cite{Beta AFA,Beta Jot}. These works demonstrate that
an extremal condition imposed on reducing subspaces provides a
natural intermediate framework between ordinary and absolute
attainment.

This observation motivates the following question: can one obtain a
meaningful intermediate class of minimum attaining operators by
requiring minimum attainment only on the reducing subspaces of the
operator? We give an affirmative answer. For $T\in\mathcal B(H)$, let
$\mathcal R_T$ denote the collection of all reducing subspaces for $T$.
We define
$$
\mathcal M_r(H):=
\left\{
T\in\mathcal B(H):
T|_M\in\mathcal M(M)
\text{ for every nonzero }M\in\mathcal R_T
\right\}.
$$
We refer to the operators in $\mathcal M_r(H)$ as
\emph{minimum attaining on reducing subspaces}. By definition,
$$
\mathcal{AM}(H)\subseteq\mathcal M_r(H)\subseteq\mathcal M(H).
$$
Both inclusions are proper, and hence $\mathcal M_r(H)$ forms a
genuinely new intermediate class between the absolutely minimum attaining and minimum attaining operators.

The use of reducing subspaces is essential in this definition. On
the one hand, the condition defining $\mathcal M_r(H)$ is weaker than
absolute minimum attainment because no requirement is imposed on
arbitrary closed subspaces. On the other hand, it is stronger than
ordinary minimum attainment, since every nonzero reducing component
must itself attain its minimum modulus. In particular, every
irreducible minimum attaining operator belongs to $\mathcal M_r(H)$.
The class also contains several familiar classes of operators,
including idempotents, orthogonal projections, partial isometries, and
certain Toeplitz operators. Thus $\mathcal M_r(H)$ is not merely a
formal interpolation between two existing classes, but a class that
arises naturally from the intrinsic decomposition of an operator.

The purpose of this paper is to develop the basic spectral and
structural theory of $\mathcal M_r(H)$. 
Our results are developed in parallel with several known results for
$\beta$-class operators. While the formulations are analogous, the
arguments presented here are based on the defining property of
$\ma_r(H)$, namely minimum attainment on every nonzero reducing subspace, and on
the characterizations established in this paper. We give independent proofs of all our results.

We first establish its fundamental properties and study its behavior under unitary equivalence, adjoints, and the modulus. We then obtain
spectral characterizations, beginning with self-adjoint and positive
operators. A central feature of the positive case is the appearance
of bounded countable well-ordered subsets of $[0,\infty)$. More
precisely, a positive operator $T$ belongs to $\mathcal M_r(H)$
when it admits a spectral representation $ T=\sum_{\lambda\in\Lambda}\lambda P_\lambda$,
where $\Lambda\subseteq[0,\|T\|]$ is bounded countable well-ordered set, and $(P_\lambda)_{\lambda\in\Lambda}$ is a family of
mutually orthogonal nonzero projections whose sum is $I$ in the strong operator topology.

The occurrence of the well-ordering condition is a direct consequence
of the requirement that minimum attainment persist on reducing
subspaces. Indeed, if a positive operator has eigenvalues
$$\lambda_1>\lambda_2>\lambda_3>\cdots\longrightarrow\lambda$$
with $\lambda$ not among the eigenvalues, then the closed linear span of the
corresponding eigenspaces is a reducing subspace on which the minimum
modulus is $\lambda$, but this minimum is not attained. Thus such an
operator cannot belong to $\mathcal M_r(H)$. This phenomenon underlies
the spectral characterization obtained in the positive case.

The spectral description also leads to an approximation result. We
show that self-adjoint operators having finite spectrum belong to
$\mathcal M_r(H)$ and use this fact, together with the polar
decomposition, to prove that $$\overline{\mathcal M_r(H)}^{\,\|\cdot\|}
    =\mathcal B(H).$$
In addition to this, we prove that the operators in $\mathcal{M}_r(H)$ having a nontrivial invariant half-space are also
norm dense in $\mathcal{B}(H)$.

We further study the behavior of $\mathcal M_r(H)$ under natural
operator theoretic constructions, including finite rank and compact
perturbations and direct sums. The interaction with the modulus is
particularly useful for quasinormal and normal operators. For a
quasinormal operator $T$, membership in $\mathcal M_r(H)$ is determined by the corresponding property of the modulus. In the normal case,
we obtain a characterization together with a
structural representation by orthogonal direct sums of scalar
multiples of unitary operators indexed by a bounded countable
well-ordered set. As a concrete application, we characterize
multiplication operators on $L^2(\mathbb T)$ whose symbols give rise
to operators in $\mathcal M_r(L^2(\mathbb T))$.

Finally, we consider general operators satisfying the structural
condition $\mathcal R_T=\mathcal R_{|T|}$.
Under this hypothesis, we establish a characterization of the operators in $\mathcal M_r(H)$ with the
representation involving a countable well-ordered family of
positive scalars, mutually orthogonal projections, and partial
isometries. We also examine the condition
$\mathcal R_T=\mathcal R_{|T|}$ through the von Neumann algebra
generated by $|T|$.

\section{Preliminaries}
Throughout this article, we deal with infinite dimensional separable Hilbert spaces over $\C$, and denote them by $H,H_1,H_2$ etc. For a Hilbert space $H$, we denote its unit sphere by $S_H:=\{x\in H:\|x\|=1\}$. The class of all bounded linear operators from $H_1$ into $H_2$ is denoted by $\clb(H_1,H_2)$. For $T\in \clb(H_1,H_2)$, the \emph{minimum modulus} of $T$ is defined by $m(T):=\inf\{\|Tx\| : x\in S_{H_1}\}$.

We say $T\in\clb(H_1,H_2)$ is \emph{norm attaining}, if $\|Tx_0\|=\|T\|$ for some $x_0\in S_{H_1}$. We denote this class by $\na(H_1,H_2)$, in particular $\na(H):=\na(H,H)$. The operator $T$ is said to be \emph{minimum attaining} if there exists $y_0\in S_{H_1}$ such that $\|Ty_0\|=m(T)$. The class of all minimum attaining operators in $\clb(H_1,H_2)$ is denoted by $\ma(H_1,H_2)$, in particular $\ma(H):=\ma(H,H)$.

For every nonzero closed subspace $M$ of $H_1$, (i) if $T|_M\in \na(M,H_2)$, then $T$ is called \textit{absolutely norm attaining} ($\an$) operator; (ii) if $T|_M\in\ma(M,H_2)$, then $T$ is called \textit{absolutely minimum attaining} ($\am$) operator. We refer to \cite{Carvajal 1,Carvajal 2,AM Cha,NBGRAM} for details about these classes.

We denote the class of finite rank operators and the class of compact operators by $\clf(H_1,H_2)$ and $\clk(H_1,H_2)$, respectively. For $H=H_1=H_2$, we use the notation $\clf(H):=\clf(H,H)$ and $\clk(H):=\clk(H,H)$. We write $\clb(H)_{sa}$ and $\clb(H)_{+}$ for the sets of all self-adjoint and positive operators on $H$, respectively.

For $T\in \clb(H)$, the \textit{spectrum} of $T$ is defined by $$\sigma(T):=\{\lambda\in \C : T-\lambda I \text{ is not invertible}\},$$ and the \textit{point spectrum} of $T$ is defined by $$\sigma_p(T):=\{\lambda\in\C : T-\lambda I \text{ is not one-one}\}.$$
An operator $T\in\clb(H)$ is called \textit{Fredholm} if $R(T)$ is closed and $N(T)$, $N(T^*)$ are finite dimensional. The \textit{essential spectrum} of $T$ is defined by $$\es(T):=\{\mu \in \C : T-\mu I \text{ is not Fredholm}\}.$$
For basic results concerning the spectrum and spectral theory of
bounded linear operators, we refer to \cite{Gohberg}.

A closed subspace $M$ of $H$ is said to be \textit{invariant} for $T\in \clb(H)$ if $T(M)\subseteq M$. Further, $M$ is called \textit{reducing} for $T$ if it is invariant for $T$ as well as $T^*$. We denote the set of all invariant and reducing subspaces for $T$ by $Lat_T$ and $\red_T$, respectively.
Let $P_j$ denote the orthogonal projection of $H$ onto $H_j$ for $j=1,2$. For any $T\in \clb (H)$, its block matrix representation with respect to the decomposition $H= H_1\oplus H_2$ is given by 
\begin{equation}
T=\begin{bmatrix}
    A_{11} & A_{12} \\
    A_{21} & A_{22}
\end{bmatrix},
\end{equation}
where $A_{ij}=P_iTP_j|_{H_j}\in \clb(H_j, H_i)$ for $i,j \in\{1,2\}$. If $H_1,H_2 \in Lat_T$, then $A_{12}=0$ and $A_{21}=0$, and we write $T=A_1\oplus A_2$. For basic properties of invariant and reducing subspaces, we refer to \cite{Conway,Inv1973}.

\smallskip
Let $H=\bigoplus_{n=1}^\infty H_n$ be direct sum of Hilbert spaces $H_n$, defined by $$H=\{(x_n) : x_n \in H_n, n\in\N \text{ with } (\|x_n\|)\in \ell^2(\N)\}.$$ 
Then $H$ is a Hilbert space with respect to the inner product $$\langle (x_n), (y_n) \rangle_{H}=\sum_{n\in \N} \langle x_n,y_n \rangle_{H_n},\quad (x_n),(y_n)\in H.$$
For $T_n\in \clb(H_n)$, the operator $T=\bigoplus_{n\in \N} T_n : H\longrightarrow H$ is defined by 
\begin{equation}\label{direct sum operators}
    T(x_1,x_2,\cdots)=(T_1x_1,T_2x_2,\cdots),\quad x_n\in H_n, n\in \N.
\end{equation}
Note that $T\in \clb(H)$ if and only if $\sup_{n\in\N} \{\|T_n\|\}<\infty$, and the norm of $T$ is given by 
\begin{equation}
\|T\|=\sup_{n\in\N} \|T_n\|
\end{equation}
(see \cite[Exercise 12, Page 30]{Conway}).

\section{Minimum Attainment on Reducing Subspaces}
In this section, we introduce a class of operators obtained by requiring
minimum attainment on the reducing subspaces naturally associated with
the operator. For $T\in\clb(H)$, let $\red_T$ denote the family of all
reducing subspaces of $T$. We define the class of operators that are
minimum attaining on every nonzero reducing subspace as follows.
\begin{definition}
\begin{equation*}
\ma_r(H):=
\left\{
T\in\clb(H):
\restr{T}{M}\in\ma(M)
\text{ for every nonzero }M\in\red_T
\right\}.
\end{equation*}
\end{definition}
It is immediate from the definition that
$$
\am(H)\subseteq \ma_r(H)\subseteq \ma(H),
$$
and we will subsequently present concrete examples showing that both inclusions are strict.

We next investigate the basic properties of $\ma_r(H)$. In particular, we study its relationship with the adjoint and the modulus, its behavior under unitary equivalence. We also identify some important classes of operators contained in $\ma_r(H)$, such as idempotents, partial isometries, and Toeplitz operators etc.

For $T\in\clb(H)$, we often use the notation $T_M :=\restr{T}{M}: M\longrightarrow M$, where $M$ is a reducing subspace for $T$. We denote the set of all positive operators in $\ma_r(H)$ by $\ma_r(H)_+$.

 \begin{definition}
     Let $T\in\clb(H)$. Then $T$ is called \emph{reducible} if $T$ has a nonzero proper reducing subspace. Otherwise $T$ is called an \emph{irreducible} operator.
 \end{definition}
We refer to \cite{Halmos} for details on irreducible operators.

 \begin{remark}
    Let $T\in \clb(H)$ be an irreducible operator, that is, $\red_T=\{\{0\},H\}$. If $T\in\ma(H)$, then $T\in\ma_r(H)$.
\end{remark}

 \begin{proposition}\cite[Proposition 3.1]{AM Cha}\label{eigenvalue}
   Let $T\in \clb(H)_{sa}$. Then $T\in \ma(H)$ if and only if $m(T)$ or $-m(T)$ is an eigenvalue of $T$.  
 \end{proposition}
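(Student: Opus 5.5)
The plan is to reduce the statement to a fact about the positive operator $T^2$ and then read that fact off the spectral theorem. For $T\in\clb(H)_{sa}$ and a unit vector $x$ we have $\|Tx\|^2=\langle T^2x,x\rangle$. Hence
\[
m(T)^2=\inf_{x\in S_H}\langle T^2x,x\rangle=\min\sigma(T^2),
\]
and $m(T)^2\in\sigma(T^2)$. By the spectral mapping theorem, $\sigma(T^2)=\{\lambda^2:\lambda\in\sigma(T)\}$, so $m(T)=\min\{|\lambda|:\lambda\in\sigma(T)\}$. In particular at least one of $m(T)$ and $-m(T)$ lies in $\sigma(T)$.

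For the reverse implication, suppose $Tx_0=\pm m(T)x_0$ for some $x_0\in S_H$. Then $\|Tx_0\|=m(T)$, so $T\in\ma(H)$. This direction is immediate.

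For the forward implication, suppose $\|Tx_0\|=m(T)$ with $x_0\in S_H$, and set $\mu=m(T)^2$.
\begin{enumerate}
\item Since $\langle (T^2-\mu I)x_0,x_0\rangle=0$ and $T^2-\mu I\geq 0$, the Cauchy--Schwarz inequality for the positive semidefinite form $(x,y)\mapsto\langle (T^2-\mu I)x,y\rangle$ gives $(T^2-\mu I)x_0=0$.
\item Factor $T^2-\mu I=(T-m(T)I)(T+m(T)I)$.
\item If $m(T)=0$, then $T^2x_0=0$ forces $\|Tx_0\|^2=\langle T^2x_0,x_0\rangle=0$, so $0$ is an eigenvalue of $T$.
\item If $m(T)>0$, put $y=(T+m(T)I)x_0$. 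If $y\neq 0$, then $(T-m(T)I)y=0$, so $m(T)$ is an eigenvalue with eigenvector $y$. If $y=0$, then $Tx_0=-m(T)x_0$, so $-m(T)$ is an eigenvalue.
\end{enumerate}
The two signs are handled separately in the last step for a reason. Since $x_0$ may mix the $\pm m(T)$ eigenspaces, one cannot expect $x_0$ itself to be an eigenvector, but one of the two factors always produces one.

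The only step requiring care is the first one, namely that a minimizer of the quadratic form of the positive operator $T^2-\mu I$ lies in its kernel. This is the standard Cauchy--Schwarz argument: $|\langle Sx_0,y\rangle|^2\le\langle Sx_0,x_0\rangle\langle Sy,y\rangle$ for $S\geq 0$. Taking $y=Sx_0$ yields $Sx_0=0$.
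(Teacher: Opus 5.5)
Your proof is correct and is essentially the standard argument: the paper does not prove this result but cites it from \cite{AM Cha}, and your step (1) is exactly the positivity argument of Lemma \ref{ma set} (the set where $\|Tx\|=m(T)\|x\|$ is $N(T^*T-m(T)^2I)$), after which the factorization $T^2-m(T)^2I=(T-m(T)I)(T+m(T)I)$ finishes. Your case split on $y=(T+m(T)I)x_0$ correctly replaces the invertibility of $|T|+m(T)I$ used in that lemma, since $T+m(T)I$ need not be invertible; the opening spectral-mapping paragraph is never used, and the separate case $m(T)=0$ is already covered by the $y$-argument, so both can be dropped.
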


Next, we illustrate with the following example that $\ma_r(H)$ is strictly contained in $\ma(H)$.
 
\begin{example}
    Let $T: \ell^2(\mathbb{N})\longrightarrow \ell^2(\mathbb{N})$ be defined by 
$$
T(x_1,x_2,x_3,\cdots)=\left(0,(\frac{1}{2}+\frac{1}{2})x_2, (\frac{1}{2}+\frac{1}{3})x_3,(\frac{1}{2}+\frac{1}{4})x_4,\cdots\right),
$$ for all $(x_n)\in \ell^2(\N)$.
Here $m(T)=0=\|Te_1\|$. Thus $T\in \ma(\ell^2(\mathbb{N}))$. Consider the subspace $M=\bigvee\{e_n : n\geq 2\}$, where $\bigvee$ denotes the closed linear span. Then $M\in \red_T$. Now $m(T_M)=\frac{1}{2}$ but $\frac{1}{2}\notin \sigma_p(T_M)$, and so by Proposition \ref{eigenvalue}, $T\notin \ma_r(H)$. Therefore, $\ma_r(H)\subsetneq \ma(H)$.
\end{example} 

It is to be noted that $\ma(H)$ is dense in $\clb(H)$ with respect to the operator norm of $\clb(H)$ \cite{SHK:RG2018}. Although $\ma_r(H)$ is a proper subclass of $\ma(H)$, we later show that $\overline{\ma_r(H)}^{\|\cdot\|_{op}}=\clb(H)$. This improves the density result.

\begin{lemma}\label{m(T) for direct sum operators}
    Let $T_n\in \clb(H_n)$ for $n\in\N$, and consider the operator $T=\bigoplus_{n\in\N}T_n\in\clb\left(\bigoplus_{n\in\N}H_n\right)$. Then the minimum modulus of $T$ is given by $m(T)=\inf_{n\in\N}m(T_n)$.
\end{lemma}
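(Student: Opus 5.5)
We prove the two inequalities $m(T)\le \inf_{n}m(T_n)$ and $m(T)\ge \inf_{n}m(T_n)$ separately. Write $\mu:=\inf_{n\in\N}m(T_n)$, and note that $\mu\le \sup_n\|T_n\|<\infty$, so $\mu$ is a finite nonnegative number.

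For $m(T)\le\mu$, we embed each summand. Fix $n\in\N$ and a unit vector $x\in S_{H_n}$. Let $\tilde x\in H$ be the sequence with $x$ in the $n$-th coordinate and $0$ elsewhere. Then $\tilde x\in S_H$ and, by \eqref{direct sum operators}, $\|T\tilde x\|=\|T_nx\|$. Hence $m(T)\le\|T_nx\|$. Taking the infimum over $x\in S_{H_n}$ gives $m(T)\le m(T_n)$, and taking the infimum over $n$ gives $m(T)\le\mu$.

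For $m(T)\ge\mu$, take $x=(x_n)\in S_H$, so that $\sum_n\|x_n\|^2=1$. For each $n$, the definition of minimum modulus (applied to $x_n/\|x_n\|$ when $x_n\neq0$, and trivially when $x_n=0$) gives $\|T_nx_n\|\ge m(T_n)\|x_n\|\ge\mu\|x_n\|$. Therefore
\begin{equation*}
\|Tx\|^2=\sum_{n\in\N}\|T_nx_n\|^2\ge\mu^2\sum_{n\in\N}\|x_n\|^2=\mu^2 .
\end{equation*}
Taking the infimum over $x\in S_H$ yields $m(T)\ge\mu$.

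There is no real obstacle. The only points to check are that the embedded vector $\tilde x$ lies in $H$ with norm one, and that the case $x_n=0$ is handled by the homogeneous inequality $\|T_nx_n\|\ge m(T_n)\|x_n\|$. Both follow directly from the definitions.
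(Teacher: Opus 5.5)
Your proposal is correct and follows essentially the same route as the paper. You get the lower bound from $\|Tx\|^2=\sum_n\|T_nx_n\|^2\ge\mu^2\sum_n\|x_n\|^2$, and the upper bound by embedding unit vectors of a single summand $H_n$ into $H$; your explicit treatment of the case $x_n=0$ and of the finiteness of $\mu$ just makes the same argument slightly more careful.
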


\begin{proof}
    Let $x=(x_n)\in \bigoplus_{n\in\N}H_n$ be any unit vector, where $x_n\in H_n$ for $n\in\N$. Then by the definition \eqref{direct sum operators}, we have $$\|Tx\|^2=\sum_{n\in\N}\|T_nx_n\|^2 \geq \sum_{n\in\N}m(T_n)^2\|x_n\|^2 \geq \inf_{n\in\N}m(T_n)^2,$$ which further implies $m(T)\geq \inf_{n\in\N} m(T_n)$.

    On the other hand, observe that $$m(T)\leq \inf_{x_n\in H_n,\,\|x_n\|=1}\|T(0,\cdots,x_n,0,\cdots)\|=\inf_{x_n\in S_{H_n}}\|T_n x_n\|=m(T_n)$$ for all $n\in\N$.
    Therefore, $m(T)\leq \inf_{n\in\N} m(T_n)$. Combining this with the previous inequality, we obtain the desired equality.
\end{proof}
\begin{lemma}\label{ma set}
Let $T \in \clb(H)$ and $G=\{ x \in H : \|Tx\| = m(T)\|x\| \}$. Then $G=N(T^*T- m(T)^2 I)$.
Moreover,
$$
G =
\begin{cases}
N(T), & \text{if } m(T)=0, \\
N(|T| - m(T)I), & \text{if } m(T)>0.
\end{cases}
$$
\end{lemma}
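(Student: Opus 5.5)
The plan is to pass from the norm identity to a statement about the positive operator $A:=T^*T-m(T)^2I$. First I will check that $A\geq 0$. For every $x\in H$ we have $\|Tx\|\ge m(T)\|x\|$, hence
$$\langle Ax,x\rangle=\|Tx\|^2-m(T)^2\|x\|^2\ge 0.$$
In particular $\langle Ax,x\rangle=0$ exactly when $x\in G$.

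Next I will show $G=N(A)$. The inclusion $N(A)\subseteq G$ is immediate: if $Ax=0$, then $\|Tx\|^2-m(T)^2\|x\|^2=\langle Ax,x\rangle=0$. For the reverse inclusion I will use the positive square root of $A$. Since $\langle Ax,x\rangle=\|A^{1/2}x\|^2$, the condition $x\in G$ forces $A^{1/2}x=0$, and then $Ax=A^{1/2}A^{1/2}x=0$. (Alternatively, the Cauchy--Schwarz inequality for the positive semidefinite form $(x,y)\mapsto\langle Ax,y\rangle$ gives $|\langle Ax,y\rangle|^2\le\langle Ax,x\rangle\langle Ay,y\rangle=0$ for all $y$.) This proves $G=N(T^*T-m(T)^2I)$. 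The one point needing care is the positivity of $A$, and that was settled in the first step; the rest is routine.

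For the case split I will use $T^*T=|T|^2$.
\begin{itemize}
\item If $m(T)=0$, then $G=N(T^*T)$. Since $\langle T^*Tx,x\rangle=\|Tx\|^2$, we get $N(T^*T)=N(T)$.
\item If $m(T)>0$, factor
$$T^*T-m(T)^2I=(|T|+m(T)I)(|T|-m(T)I),$$
where the two factors commute. The operator $|T|+m(T)I$ is bounded below by $m(T)>0$, because $|T|\ge0$ gives $\langle(|T|+m(T)I)x,x\rangle\ge m(T)\|x\|^2$. So it is injective, and therefore $N(A)=N(|T|-m(T)I)$.
\end{itemize}
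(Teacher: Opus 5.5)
Your proposal is correct and follows essentially the same route as the paper: positivity of $T^*T-m(T)^2I$ turns $\langle Ax,x\rangle=0$ into $Ax=0$, and for $m(T)>0$ the factorization $(|T|+m(T)I)(|T|-m(T)I)$, whose first factor is injective, identifies the kernel. You supply slightly more detail than the paper: you justify $A\ge 0$ from $\|Tx\|\ge m(T)\|x\|$, you pass from $\langle Ax,x\rangle=0$ to $Ax=0$ via $A^{1/2}$ or Cauchy--Schwarz, and you use only injectivity of $|T|+m(T)I$ where the paper uses invertibility, which is all that is needed.
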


\begin{proof}
    Let $x\in G$. Then $\|Tx\|^2=m(T)^2\|x\|^2$, that is, $\langle T^*Tx,x\rangle = m(T)^2 \langle x,x\rangle$,
    which is equivalent to
    $$\langle (T^*T - m(T)^2 I)x, x\rangle = 0.$$
    Since $T^*T - m(T)^2 I \ge 0$, it follows that $(T^*T - m(T)^2 I)x = 0.$ Thus $x\in N(T^*T-m(T)^2I)$.

    Conversely, let $x\in N(T^*T-m(T)^2I)$. Then $T^*Tx=m(T)^2x$, and hence $\|Tx\|^2=m(T)^2\|x\|^2.$ Thus $x\in G$. Therefore $G=N(T^*T-m(T)^2I)$.
    
\smallskip
    If $m(T)=0$, then $G=N(T^*T) = N(T)$. Next, assume that $m(T)>0$. Then the operator $|T|+m(T)I$ is invertible. Since $T^*T - m(T)^2 I=(|T| + m(T)I)(|T| - m(T)I)$, we get $N(T^*T - m(T)^2)=N(|T| - m(T)I)$. In this case, $G=N(|T| - m(T)I)$.
\end{proof}

\begin{remark}
   Combining above two cases, we can say $G=N(|T|-m(T)I)$. For $T\in \clb(H)$, 
   \begin{enumerate}
       \item $T\in\ma(H)$ if and only if $G\neq \{0\}$.

    \item If $T$ is normal, then both $T$ and $T^*$ commute with $|T|$. Consequently, $G$ reduces $T$.
    \end{enumerate}  
\end{remark}
\medskip

Let $T\in\clb(H)$ and $M\in \red_T$. Then there exists an orthogonal projection $P$ of $H$ onto $M$ such that $PT=TP$ (see \cite[Proposition 3.7, Page 39]{Conway}), and equivalently $T^*P=PT^*$. It is easy to observe that $PT^*T=T^*TP$. Thus $M$ reduces $T^*T$. Therefore, we have $\red_T\subseteq \red_{T^*T}$. With the help of continuous functional calculus, it can be shown that $\red_{|T|}=\red_{T^*T}$.

\begin{proposition}\label{equivalence}
    Let $T\in\clb(H)$. Then we have the following:
    \begin{enumerate}
        \item $|T|\in \ma_r(H)_+$ if and only if $T^*T\in \ma_r(H)_+$
        
        \item If $|T|\in\ma_r(H)_+$, then $T\in \ma_r(H)$.

        \item If $T\in\clb(H)_{+}$, then $T\in\ma_r(H)$ if and only if $T^2\in\ma_r(H)$.
    \end{enumerate}
\end{proposition}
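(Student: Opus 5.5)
The three parts all rest on one mechanism: reducing subspaces pass from $T$ to $T^*T$ and to $|T|$, and on a reducing subspace the minimum modulus and its attainment are controlled by $|T|$ through Lemma \ref{ma set}.

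\emph{Part (1).} We have $\red_{|T|}=\red_{T^*T}$, as noted before the statement. Fix a nonzero $M$ in this common family and put $A=|T|_M$. Then $(T^*T)_M=A^2$, and $A\geq 0$. For $x\in M$ we have $\|A^2x\|\geq m(A)^2\|x\|$, so $m(A^2)=m(A)^2$; this can be seen, for instance, from $m(S)=\min\sigma(S)$ for positive $S$ together with spectral mapping. Moreover $\|A^2x\|=m(A)^2\|x\|$ if and only if $x\in N(A-m(A)I)$. To see this, apply Lemma \ref{ma set} to $A$ and to $A^2$: the set of minimizing vectors for $A^2$ is $N(A^4-m(A)^4I)$. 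This equals $N(A^2-m(A)^2I)=N(A-m(A)I)$, because $A^2+m(A)^2I$ is injective when $m(A)>0$, and $N(A^4)=N(A)$ when $m(A)=0$. Hence $A\in\ma(M)$ if and only if $A^2\in\ma(M)$, which proves (1).

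\emph{Part (3).} This is the same computation applied to a positive $T$. The only extra point is that $\red_T=\red_{T^2}$ when $T\geq 0$. One inclusion is clear. For the other, a projection commuting with $T^2$ commutes with $(T^2)^{1/2}=T$, because the square root is a norm limit of polynomials in $T^2$ by the continuous functional calculus. With the reducing families equal, the argument of (1) gives the result verbatim.

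\emph{Part (2).} Let $M\in\red_T$ be nonzero, with projection $P$. Then $P$ commutes with $T$ and $T^*$, so with $T^*T$ and hence with $|T|$. Thus $M\in\red_{|T|}$, and $(T_M)^*T_M=(T^*T)_M$, which gives $|T_M|=|T|_M$ by uniqueness of the positive square root. For $x\in M$ we have $\|T_Mx\|=\||T|_Mx\|$, so $m(T_M)=m(|T|_M)$, and the two are attained at the same vectors. Since $|T|\in\ma_r(H)$, the restriction $|T|_M$ attains its minimum, so $T_M$ does as well. Therefore $T\in\ma_r(H)$.

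The only step needing care is the identification of reducing subspaces: $\red_T\subseteq\red_{|T|}$, used in (2), and $\red_T=\red_{T^2}$ for positive $T$, used in (3). Both follow from the functional-calculus fact that anything commuting with a positive operator $S$ commutes with $S^{1/2}$. The reverse inclusion fails in general, which is why (2) is only an implication.
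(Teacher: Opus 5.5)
Your proof is correct and follows essentially the same route as the paper. For (1) you compare $|T|_M$ with $(T^*T)_M=(|T|_M)^2$ on each nonzero common reducing subspace; for (2) you use $\red_T\subseteq\red_{|T|}$ together with $\|Tx\|=\||T|x\|$; and (3) is (1) with $|T|=T$, which is what your identification $\red_T=\red_{T^2}$ amounts to. Your use of Lemma \ref{ma set} to get $N(A^2-m(A)^2I)=N(A-m(A)I)$ makes explicit a step the paper covers with the single word ``equivalently''.
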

\begin{proof}
    \textbf{Proof of (i)}: For a nonzero $M\in\red_{|T|}=\red_{T^*T}$, observe that 
    \begin{align*}
        m(|T|_M)^2=\inf_{x\in S_M} \||T|x\|^2=\inf_{x\in S_M} \langle T^*Tx,x\rangle = m(T^*T|_M),
    \end{align*}
    where the last equality follows from \cite[Proposition 2.2]{Carvajal 2}. Now $|T|_M\in\ma(M)$ if and only if $m(|T|_M)\in\sigma_p(|T|_M)$, that is, there exists a nonzero $x\in M$ such that $|T|_M x=m(|T|_M)x$. Equivalently, $T^*T|_M x= m(|T|_M)^2 x=m(T^*T|_M)x$, and hence $T^*T|_M\in\ma(M)$. Since $M$ is arbitrary, the statement follows.
    
    \smallskip
    \textbf{Proof of (ii)}: Let $|T|\in \ma_r(H)$. Then $|T|_M\in \ma(M)$ for every nonzero $M\in \red_{|T|}$. Since $\red_T\subseteq \red_{|T|}$, we get $|T|_N\in \ma(N)$ for every nonzero $N\in\red_T$. But $\|Tx\|=\||T|x\|$ for all $x\in N$. Thus $T_N\in\ma(N)$ for every nonzero $N\in \red_T$, and so $T\in\ma_r(H)$.

    \smallskip
    \textbf{Proof of (iii):} This is a direct consequence of (i).  
\end{proof}

\begin{remark}
     In general, $T\in\ma_r(H)$ neither implies $T^*\in\ma_r(H)$ nor $|T|\in\ma_r(H)_+$, and does not guarantee that $R(T)$ is closed. The following example illustrates these phenomena. However, we shall show later that these properties hold when $T$ is normal.
\end{remark}

\begin{example}
Consider the operator $T: \ell^2(\N)\longrightarrow \ell^2(\N)$ such that $T(e_1)=0$ and $T(e_n)=\frac{1}{2^n}e_{n-1}$ for $n\geq 2$. Here $T$ is irreducible, that is, $\ell^2(\N)$ is the only nonzero reducing subspace for $T$. Also $\|T(e_1)\|=0=m(T)$, and so $T\in\ma_r(\ell^2(\N))$. 

It can be shown that $T^*(e_n)=\frac{1}{2^{n+1}}e_{n+1}$ for all $n\in\N$, and $m(T^*)=0$. Since $T^*$ is injective, there does not exist any $x\in S_{\ell^2(\N)}$ such that $\|T^*x\|=0$. Therefore, $T^*\notin \ma_r(\ell^2(\N))$.

Observe that $T^*T(e_1)=0$ and $T^*T(e_n)=\frac{1}{4^n}e_n$ for all $n\geq 2$. Consider the subspace $M=\bigvee\{e_n:n\geq 5\}$, which reduces $T^*T$. But $m(T^*T|_M)=0\notin \sigma_p(T^*T|_M)$, and hence $T^*T\notin\ma_r(\ell^2(\N))$. By Proposition \ref{equivalence}, it follows that $|T|\notin\ma_r(\ell^2(\N))$.

Note that $N(T)^{\perp}=\bigvee\{e_n : n\geq 2\}$. Since $m(T|_{N(T)^{\perp}})=0$ and $T|_{N(T)^\perp}$ is injective, $R(T)$ is not closed.

We remark that the operator $\widetilde{T}=T\oplus 2I$ on $\ell^2(\N)\oplus \ell^2(\N)$ being reducible, serves a nontrivial example of the above phenomena. In this case, $\red_{\widetilde{T}}=\bigg\{\{0\}\oplus N, \ell^2(\N)\oplus N : N \text{ is any closed subspace of } \ell^2(\N)\bigg\}$.
\end{example}

An operator $T\in \clb(H)$ is said to be $WEP$ (Weak Equal Projection), if $R(T)=R(T^*)$. This condition implies that $N(T)=N(T^*)$. In addition, if $R(T)$ is closed, then we say $T$ to be an $EP$ operator. The class 
of weak EP operators includes normal operators and invertible operators; see \cite{EP2024}.

\begin{proposition}\label{WEP T and T* eqv}
Let $T$ be a $WEP$-operator on $H$. Then $T\in\ma_r(H)$ if and only if
$T^*\in\ma_r(H)$.
\end{proposition}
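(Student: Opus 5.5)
The plan is to reduce the statement to a fact about a single WEP operator, applied to each reducing subspace separately. Since $M\in\red_T$ if and only if $M\in\red_{T^*}$, the two conditions ``$T\in\ma_r(H)$'' and ``$T^*\in\ma_r(H)$'' quantify over the same family of subspaces. Moreover $(T_M)^*=(T^*)_M$ for every $M\in\red_T$. By symmetry it therefore suffices to prove two things: that $T_M$ is again WEP on $M$, and that a WEP operator $S$ is minimum attaining if and only if $S^*$ is.

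For the first point, let $P$ be the orthogonal projection onto $M$, which commutes with $T$ and $T^*$. Then
$$R(T_M)=TPH=PTH=P\,R(T)\qquad\text{and}\qquad R((T_M)^*)=P\,R(T^*).$$
Since $R(T)=R(T^*)$, these two ranges coincide, so $T_M$ is WEP on $M$. In particular $N(T_M)=N((T_M)^*)$.

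For the second point, let $S$ be WEP on a Hilbert space $K$; by the symmetry $S\leftrightarrow S^*$ it is enough to show that if $S\in\ma(K)$ then $S^*\in\ma(K)$. I split into cases according to the kernel.

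\emph{Case $N(S)\neq\{0\}$.} Then $N(S^*)=N(S)\neq\{0\}$, so $m(S^*)=0$ and this value is attained on a unit vector of $N(S^*)$.

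\emph{Case $N(S)=\{0\}$.} By Lemma \ref{ma set}, minimum attainment of $S$ forces $m(S)>0$: if $m(S)=0$, the attaining set would be $N(S)=\{0\}$. Hence $S$ is bounded below, and $R(S)$ is closed. As $N(S^*)=N(S)=\{0\}$, the range $R(S)$ is also dense, so $R(S)=K$ and $S$ is invertible; then $S^*$ is invertible as well. Minimum attainment now transfers through inverses. For invertible $S$ one has $m(S)=1/\|S^{-1}\|$, and $S$ attains its minimum modulus at $x$ if and only if $S^{-1}$ attains its norm at $Sx/\|Sx\|$. Thus $S\in\ma(K)$ if and only if $S^{-1}\in\na(K)$. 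Similarly, $S^*\in\ma(K)$ if and only if $(S^*)^{-1}=(S^{-1})^*\in\na(K)$.

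The argument then closes with the standard fact that an operator attains its norm if and only if its adjoint does. Indeed, if $\|Ax_0\|=\|A\|$ with $\|x_0\|=1$, then $A^*Ax_0=\|A\|^2x_0$, and so $A^*$ attains its norm at $Ax_0/\|Ax_0\|$ (when $A\neq0$). Applying this to $A=S^{-1}$ gives $S^*\in\ma(K)$.

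The main obstacle is the case $N(S)=\{0\}$. One must rule out an injective WEP operator attaining its minimum while its adjoint fails to, and the key is that the WEP hypothesis upgrades ``bounded below'' to ``invertible''. Once invertibility is in hand, the norm-attainment duality for $S^{-1}$ finishes the argument.
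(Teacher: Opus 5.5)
Your proof is correct. Its outer structure matches the paper's: both pass to an arbitrary nonzero $M\in\red_T=\red_{T^*}$, use $(T_M)^*=(T^*)_M$, and reduce to a single-operator statement. That statement is that an operator $S$ with $N(S)=N(S^*)$ is minimum attaining if and only if $S^*$ is.

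The difference is in how that statement is handled. The paper reads off $N(T_M)=M\cap N(T)=M\cap N(T^*)=N(T^*_M)$ directly and then invokes Corollary~3.15 of \cite{NBGRAM}. You instead prove the single-operator fact yourself:
\begin{enumerate}
\item You split on whether the common kernel is trivial.
\item In the injective case, minimum attainment forces $m(S)>0$. The dense range coming from $N(S^*)=\{0\}$ then upgrades $S$ to an invertible operator.
\item You transfer attainment through $m(S)=1/\|S^{-1}\|$ and the duality $A\in\na(K)\Leftrightarrow A^*\in\na(K)$.
\end{enumerate}

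Your version is self-contained. It also makes explicit that only the equality of kernels is used, not the full WEP condition. So your range computation $R(T_M)=P\,R(T)=P\,R(T^*)=R((T_M)^*)$ is slightly more than needed; the kernel identity suffices. The paper's version is shorter but rests entirely on the external corollary.
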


\begin{proof}
Let $M$ be a nonzero reducing subspace for $T$. Then it also reduces $T^*$, and
$N(T_M)=M\cap N(T)$, $N(T^*_M)=M\cap N(T^*)$. For a $WEP$ operator these
kernels coincide. By \cite[Corollary 3.15]{NBGRAM}, we get that $T_M\in\ma(M)$ if and only if $T^*_M\in\ma(M)$. Since $M$ is arbitrary, the desired equivalence follows.
\end{proof}

We now show that every idempotent operator belongs to $\ma_r(H)$.

\begin{proposition}\label{Idempotent}
    Let $T$ be a nonzero idempotent operator on $H$. Then $T\in \ma_r(H)$.
\end{proposition}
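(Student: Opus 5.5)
Fix a nonzero $M\in\red_T$ and write $T_M$ for the restriction. Since $P_M$ commutes with $T$, we have $T_M^2=(T^2)|_M=T|_M=T_M$, so $T_M$ is again an idempotent on $M$ (possibly zero). It therefore suffices to prove that every idempotent $E$ on a nonzero Hilbert space $K$ belongs to $\ma(K)$; applying this with $E=T_M$ and $K=M$ gives $T\in\ma_r(H)$. The nonzero hypothesis on $T$ is not really needed for this, since the zero operator is trivially minimum attaining.

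For an idempotent $E$ on $K$, split into cases according to $N(E)$.

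\textbf{Case $N(E)\neq\{0\}$.} Then $m(E)=0$, and any unit vector in $N(E)$ attains the minimum, so $E\in\ma(K)$.

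\textbf{Case $N(E)=\{0\}$.} From $E(E-I)=0$ and injectivity we get $E=I$. Then $m(E)=1$ is attained at every unit vector, so again $E\in\ma(K)$.

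Alternatively, one can use the closed-range structure. Since $R(E)=N(I-E)$ is closed and $K=R(E)\dotplus N(E)$, if $E$ is not injective then $0=m(E)$ is attained, and if it is injective then $E=I$. Either way the argument is elementary, so there is no real obstacle. The only point needing care is that the restriction to a reducing subspace stays idempotent and that $M$ may lie entirely inside $N(T)$ or inside $R(T)$; both situations are covered by the two cases above. Lemma \ref{ma set} could also be invoked, via $G=N(|E|-m(E)I)\neq\{0\}$, but it is unnecessary.
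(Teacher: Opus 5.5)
Your proof is correct and follows essentially the same route as the paper: show every idempotent is minimum attaining by splitting on whether the kernel is trivial, using $E(E-I)=0$ to force $E=I$ in the injective case, then note that the restriction to any reducing subspace is again idempotent. Your remark that the nonzero hypothesis is unnecessary is accurate. It makes explicit a point the paper uses implicitly, since $T_M$ may itself be zero, and the kernel case covers this.
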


\begin{proof}
First, we show that $T$ is minimum attaining.
\begin{enumerate}
    \item  If $N(T) \neq \{0\}$, choose a unit vector $x \in N(T)$. Then $\|Tx\| = 0 = m(T)$, which implies $T \in \ma(H)$.
    
    \item If $N(T) = \{0\}$, then $T$ is injective. Thus $T^2=T$ gives $T(T - I) = 0$, and so $T-I=0$. Therefore, $T=I$, and hence $T\in\ma(H)$.
    \end{enumerate}

    Let $M\in\red_T$. Then $T_M$ is also an idempotent operator on $M$. Since $T_M\in \ma(M)$ for all nonzero $M\in\red_T$, we have $T\in \ma_r(H)$.
\end{proof}

\begin{remark}
  Let $T$ be an idempotent operator such that both $R(T)$ and $N(T)$ are infinite dimensional. Then by Proposition \ref{Idempotent}, $T\in\ma_r(H)$. On the other hand, by \cite[Theorem 6.8]{Block AN 2026}, $T$ is not $\am$. Therefore, $\am(H)\subsetneq\ma_r(H)$.
\end{remark}

\begin{corollary}\label{Ortho proj}
    Let $P$ be an orthogonal projection on $H$. Then $P\in \ma_r(H)$.
\end{corollary}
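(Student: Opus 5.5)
The plan is to deduce this corollary directly from Proposition \ref{Idempotent}, treating the zero projection separately.

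\emph{Case $P\neq 0$.} An orthogonal projection satisfies $P^2=P$, so it is a nonzero idempotent. Proposition \ref{Idempotent} then gives $P\in\ma_r(H)$ at once.

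\emph{Case $P=0$.} Proposition \ref{Idempotent} assumes $T\neq 0$, so this case needs its own argument, which is trivial. Every closed subspace $M$ reduces $0$, and the restriction $0_M$ is the zero operator on $M$. For it we have $m(0_M)=0=\|0\cdot x\|$ for every unit vector $x\in M$. Hence $0_M\in\ma(M)$ for every nonzero $M\in\red_0$, and so $0\in\ma_r(H)$.

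As a cross-check independent of Proposition \ref{Idempotent}, one can argue directly. If $M$ reduces $P$, then $P_M$ is again an orthogonal projection on $M$. If $P_M\neq I_M$, any unit vector in $N(P_M)$ attains $m(P_M)=0$. If $P_M=I_M$, every unit vector attains $m(P_M)=1$.

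There is no real obstacle here. The only point needing care is the zero projection, which Proposition \ref{Idempotent} excludes by hypothesis.
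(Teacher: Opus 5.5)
Your proposal is correct and follows the paper, which states this corollary without proof as an immediate consequence of Proposition \ref{Idempotent}, since an orthogonal projection is idempotent. Your separate treatment of $P=0$ is harmless extra care: the paper's proof of that proposition already covers the zero operator through its case $N(T)\neq\{0\}$.
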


\begin{corollary}\label{partial isometry}
    Let $V$ be a partial isometry on $H$. Then $V\in \ma_r(H)$.
\end{corollary}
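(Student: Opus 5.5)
The plan is to reduce the statement to Corollary \ref{Ortho proj} by means of Proposition \ref{equivalence}(ii). Let $V$ be a partial isometry on $H$. Then $V^*V$ is the orthogonal projection onto $N(V)^\perp$. Since $V^*V$ is a projection, it is positive and equal to its own square. Hence
$$|V|=(V^*V)^{1/2}=V^*V,$$
so $|V|$ is itself an orthogonal projection.

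By Corollary \ref{Ortho proj}, every orthogonal projection lies in $\ma_r(H)$. Applied to $|V|$, this gives $|V|\in\ma_r(H)_+$. Proposition \ref{equivalence}(ii) then yields $V\in\ma_r(H)$.

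As a cross-check, one can argue directly on reducing subspaces. Take a nonzero $M\in\red_V$. Then $M$ reduces $V^*$, so $V_M$ is again a partial isometry on $M$. We want $V_M\in\ma(M)$, and there are two cases.
\begin{enumerate}
\item If $M\cap N(V)\neq\{0\}$, any unit vector in this intersection gives $\|V_Mx\|=0=m(V_M)$.
\item If $M\cap N(V)=\{0\}$, then $N(V_M)=\{0\}$. Since $V_M$ is a partial isometry, it is isometric on $N(V_M)^\perp=M$. So $\|V_Mx\|=1$ for every unit vector $x\in M$, and the minimum $m(V_M)=1$ is attained.
\end{enumerate}

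The only point needing care is the identity $|V|=V^*V$ in the first argument. It follows from uniqueness of the positive square root, since $(V^*V)^2=V^*V$ and $V^*V\ge 0$. I do not expect any real obstacle beyond this.
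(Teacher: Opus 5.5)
Your proof is correct and is essentially the paper's argument. The paper applies Corollary~\ref{Ortho proj} to the projection $V^*V$ and then invokes Proposition~\ref{equivalence}; your observation $|V|=V^*V$ just lets you go directly to part (ii) without passing through part (i), and your case split on $M\cap N(V)$ is a valid independent check.
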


\begin{proof}
    As $V$ is a partial isometry, $V^*V$ is the orthogonal projection onto $R(V^*)$. By Corollary \ref{Ortho proj}, we get $V^*V\in \ma_r(H)$. Consequently, $V\in \ma_r(H)$, by Proposition \ref{equivalence}.
\end{proof}

Let $\T=\{z\in \C : |z|=1\}$ be the unit circle in the complex plane $\C$, and let $L^2(\T)$ denote the space of square-integrable functions on $\T$ with respect to the normalized Lebesgue measure $\mu$. We denote the Hardy Hilbert space by $H^2$, which is defined as $$H^2 :=\{f\in L^2(\T) : \widehat{f}(n)=0 \quad \forall n < 0 \},$$ where $\widehat{f}(n)=\langle f, \chi_{n} \rangle$ is the $n^{th}$ Fourier coefficient of $f$ and $\chi_{n} (z)=z^n$ for $z\in \T$, $n\in \mathbb{Z}$. Let $L^\infty (\T)$ denote the Banach space of essentially bounded measurable functions on $\T$. The Hardy space $H^\infty$ is defined as
$$H^\infty := H^2\cap L^\infty(\T).$$
A function $\vp\in H^\infty$ is called \emph{inner} if $|\vp(z)|=1$ for a.e. $z\in\T$.

For $\vp\in L^\infty(\T)$, the Toeplitz operator $T_\vp$ and the Hankel operator $H_\vp$ on $H^2$ are defined, respectively, by
$$
T_\vp f=P(\vp f), \qquad f\in H^2,
$$
and
$$
H_\vp f=J(I-P)(\vp f), \qquad f\in H^2,
$$
where $P:L^2(\T)\to L^2(\T)$ denotes the orthogonal projection onto $H^2$,
and $J:L^2(\T)\to L^2(\T)$ is the unitary operator defined by
$$
J(z^n)=z^{-n-1},\qquad n\in\mathbb{Z}.
$$
 For basic properties of Hardy spaces, Toeplitz operators and Hankel operators, we refer to \cite{Douglas,Rosenthal}.
\begin{example}
    For an inner function $\vp\in H^\infty$, the Toeplitz operator
$T_\vp$ is a partial isometry by \cite{PIT}. Since every partial isometry on $H^2$ belongs to $\ma_r(H^2)$, it follows that $T_\vp\in\ma_r(H^2)$.
\end{example}

\begin{example}
    Let $\overline{\vp}\in H^\infty$ be an inner function. Then by \cite[Exercise 4.19, Page 159]{Rosenthal}, the Hankel operator $H_{z\vp}$ is a partial isometry on $H^2$, and thus $H_{z\vp}\in\ma_r(H^2)$.
\end{example}

\begin{proposition}
    Let $T,S\in \clb(H)$. If there exists a unitary operator $U$ on $H$ such that $T=U^*SU$. Then $T\in \ma_r(H)$ if and only if $S\in \ma_r(H)$.
\end{proposition}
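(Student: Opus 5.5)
The plan is to exhibit an explicit bijection between the nonzero reducing subspaces of $T$ and those of $S$, and to check that it preserves both the minimum modulus and its attainment. Since $T=U^*SU$ is equivalent to $S=UTU^*=(U^*)^*T U^*$, the roles of $T$ and $S$ are symmetric (replace $U$ by $U^*$). It therefore suffices to prove one direction: if $S\in\ma_r(H)$, then $T\in\ma_r(H)$.

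\textbf{Step 1 (transporting reducing subspaces).} Let $M$ be a nonzero reducing subspace for $T$, and set $N:=U(M)$. Because $U$ is unitary, $N$ is a nonzero closed subspace, and its projection is $P_N=UP_MU^*$. Using $S=UTU^*$ and $P_MT=TP_M$, we get
$$P_NS=UP_MU^*UTU^*=UP_MTU^*=UTP_MU^*=SP_N.$$
Hence $N\in\red_S$.

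\textbf{Step 2 (comparing the restrictions).} For $x\in M$ we have $Ux\in N$ and $S(Ux)=UTx$, so $\|S Ux\|=\|Tx\|$. Since $U$ maps $S_M$ bijectively onto $S_N$, this gives
$$m(S_N)=\inf_{y\in S_N}\|Sy\|=\inf_{x\in S_M}\|Tx\|=m(T_M).$$

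\textbf{Step 3 (transporting attainment).} Because $S\in\ma_r(H)$, there is $y_0\in S_N$ with $\|Sy_0\|=m(S_N)$. Then $x_0:=U^*y_0\in S_M$ satisfies
$$\|Tx_0\|=\|SUx_0\|=\|Sy_0\|=m(S_N)=m(T_M),$$
so $T_M\in\ma(M)$. As $M$ was an arbitrary nonzero reducing subspace of $T$, it follows that $T\in\ma_r(H)$. The converse direction follows by the symmetry noted above.

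No step is a genuine obstacle. The only point needing care is Step 1: one must verify that $U$ carries reducing subspaces to reducing subspaces, which is done through the projection identity $P_{U(M)}=UP_MU^*$, and must not assume this without checking it.
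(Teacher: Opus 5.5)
Your proof is correct and follows essentially the same route as the paper: send a nonzero $M\in\red_T$ to $U(M)\in\red_S$, pull a minimizing unit vector back through $U^*$, and obtain the converse by symmetry with $U^*$ in place of $U$. The differences are cosmetic. You verify that $U(M)$ reduces $S$ via the projection identity $P_{U(M)}=UP_MU^*$ and compute $m(S_{U(M)})=m(T_M)$ explicitly, whereas the paper checks invariance of $U(M)$ under $S$ and $S^*$ directly and leaves the equality of the minimum moduli implicit.
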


\begin{proof}
    Let $S\in \ma_r(H)$ and $M\in \red_T$.  Then $T(M)\subseteq M$ and $T^*(M)\subseteq M$, which further implies $SU(M)\subseteq U(M)$ and $S^*U(M)\subseteq U(M)$. Thus $U(M)\in \red_{S}$. Since $S\in \ma_r(H)$, $S_{U(M)}$ is minimum attaining. Consequently, there exists a unit vector $y\in U(M)$ such that $\|S_{U(M)}y\|=m(S_{U(M)})$. Write $y=Ux$ with $x\in M$. Since $U$ is an isometry, $\|x\|=1$. Further, since $U^*$ is an isometry, $$\|U^*SUx\|=\|SUx\|=\|S_{U(M)}y\|=m(S_{U(M)})=m\bigl(U^*SU|_M\bigr).$$ 
    Hence $\|T_M x\|=m(T_M)$, and therefore $T\in \ma_r(H)$.
    
\smallskip
    Conversely, assume that $T\in \ma_r(H)$. Then by the above argument, $S=(U^*)^*\,TU^*\in \ma_r(H)$.
\end{proof}

\section{Spectral Structure of Operators in $\ma_r(H)$}

In this section, we first show that every self-adjoint operator in $\ma_r(H)$ is diagonalizable, and the absolute values of its eigenvalues form a countable well-ordered set. This leads to a spectral representation in terms of mutually orthogonal projections. We then specialize to positive operators and obtain a complete characterization of those belonging to $\ma_r(H)$ in terms of their spectral representations. We also establish a connection between $\ma_r(H)$ and the class $\beta(H)$ through an affine transformation of positive operators.

\begin{theorem}\label{diagonalizable}
    Let $T\in \ma_r(H)$ be self-adjoint. Then $H$ admits an orthonormal basis consisting of eigenvectors of $T$.
\end{theorem}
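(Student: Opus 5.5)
We argue by a maximality argument on the closed span of eigenvectors. Let $E$ denote the closed linear span of all eigenvectors of $T$, that is, $E=\bigvee_{\lambda\in\sigma_p(T)} N(T-\lambda I)$. Since $T$ is self-adjoint, eigenspaces for distinct eigenvalues are mutually orthogonal. Each eigenspace reduces $T$, so $E$ is invariant under $T=T^*$ and hence $E\in\red_T$. Consequently $E^\perp\in\red_T$ as well. We choose an orthonormal basis in each eigenspace $N(T-\lambda I)$; since $H$ is separable, this uses only countably many eigenvalues and countably many vectors. The union of these bases is an orthonormal basis of $E$ consisting of eigenvectors. The theorem therefore reduces to showing $E^\perp=\{0\}$.

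Suppose instead that $E^\perp\neq\{0\}$. Since $E^\perp$ is a nonzero reducing subspace and $T\in\ma_r(H)$, the restriction $T_{E^\perp}$ is minimum attaining on $E^\perp$. It is also self-adjoint, because $E^\perp$ reduces $T$. Proposition \ref{eigenvalue} applied on the Hilbert space $E^\perp$ then says that $m(T_{E^\perp})$ or $-m(T_{E^\perp})$ is an eigenvalue of $T_{E^\perp}$. (Alternatively, Lemma \ref{ma set} gives a nonzero $x\in E^\perp$ with $T^2x=m^2x$, and then one of $(T\pm mI)x$ is either zero or an eigenvector.) Either way we obtain a nonzero $x\in E^\perp$ and a scalar $\lambda$ with $Tx=\lambda x$. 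This $x$ is an eigenvector of $T$ on $H$, so $x\in E$. Hence $x\in E\cap E^\perp=\{0\}$, a contradiction.

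The only delicate points are the following.
\begin{enumerate}
\item Proposition \ref{eigenvalue} is stated for $H$, but it applies verbatim to any nonzero Hilbert space, including $E^\perp$, which may be finite dimensional.
\item The argument uses $E^\perp\in\red_T$, and this relies on self-adjointness of $T$: invariance of $E$ under $T$ alone gives invariance of $E^\perp$ under $T^*=T$.
\end{enumerate}
Neither point presents a real obstacle, so the main step is simply recognizing that $E^\perp$ is the right reducing subspace to which the $\ma_r$ hypothesis should be applied.
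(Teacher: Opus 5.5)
Your proof is correct and is essentially the paper's argument: the orthogonal complement of the closed span of eigenvectors reduces $T$ because $T$ is self-adjoint, and applying the $\ma_r$ hypothesis together with Proposition~\ref{eigenvalue} on that complement produces an eigenvector inside it, which is a contradiction. The only difference is that you take the closed span of \emph{all} eigenvectors at once and assemble the basis eigenspace by eigenspace, whereas the paper first extracts a maximal orthonormal set of eigenvectors via Zorn's lemma; your version is a harmless simplification that avoids Zorn.
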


\begin{proof}
    Let $\mathcal{L}=\{E : E \text{ is an orthonormal set of eigenvectors of } T\}$. Since $T\in\ma_r(H)$ is self-adjoint and $H\in \red_T$, by Proposition \ref{eigenvalue}, either $m(T)$ or $-m(T)$ is an eigenvalue of $T$. Hence we have $\mathcal{L}\neq \emptyset$. 

    Let $\mathcal{C}\subseteq \mathcal{L}$ be a chain with respect to the set inclusion. Define, 
    $E_0 := \bigcup_{E \in \mathcal{C}} E$.
    We show that $E_0\in \mathcal{L}$. Clearly, every element of $E_0$ is an eigenvector of $T$. Let $x,y \in E_0$. Then there exist $E_1, E_2 \in \mathcal{C}$ such that $x \in E_1$ and $y \in E_2$. Since $\mathcal{C}$ is totally ordered by inclusion, either $E_1 \subseteq E_2$ or $E_2 \subseteq E_1$. Without loss of generality, assume that $E_1 \subseteq E_2$. Then $x,y \in E_2$, and since $E_2$ is orthonormal, we obtain
$$
\langle x,y \rangle = 0 \quad \text{whenever } x \neq y, \text{ and } \|x\|=\|y\|=1.
$$
Clearly, $E_0$ is orthonormal, and hence $E_0 \in \mathcal{L}$. Therefore, every chain in $\mathcal{L}$ admits an upper bound in $\mathcal{L}$. By Zorn's lemma, $\mathcal{L}$ has a maximal element, say $\cle$.

\smallskip
Let $\widetilde{\cle}:=\bigvee \cle$, the closed linear span of $\cle$.   
We show that $H=\widetilde{\cle}$, equivalently $\widetilde{\cle}^\perp=\{0\}$. Since every
$e\in\cle$ is an eigenvector of $T$, we have $Te\in span\{e\}\subseteq span(\cle)$. By the linearity and continuity of $T$, it follows that $T(\widetilde{\cle})\subseteq \widetilde{\cle}$. Thus $\widetilde{\cle}$ is invariant for $T$. Since $T$ is self-adjoint, $\widetilde{\cle}^\perp$ is also invariant for $T$. Consequently, $\widetilde{\cle}^\perp\in\red_T$. 

Assume that $\cle^\perp\neq \{0\}$. Then we have
$T_{\widetilde{\cle}^\perp}\in \ma(\widetilde{\cle}^\perp)$, and so by Proposition \ref{eigenvalue}, either $m(T_{\widetilde{\cle}^\perp})$ or $- m(T_{\widetilde{\cle}^\perp})$ is an eigenvalue of $T_{\widetilde{\cle}^\perp}$. In either case, there exists a unit eigenvector of $T$ in $\widetilde{\cle}^\perp$, which contradicts the maximality of $\cle$. Hence $\widetilde{\cle}^\perp=\{0\}$. Therefore, $\cle$ forms an orthonormal basis of $H$. 
\end{proof}

\begin{corollary}\label{positive diagonalizable}
    Let $T\in \ma_r(H)$ be positive. Then $H$ admits an orthonormal basis consisting of eigenvectors of $T$.
\end{corollary}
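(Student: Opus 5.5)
This corollary is immediate from Theorem \ref{diagonalizable}. The plan is to check that a positive operator meets that theorem's hypotheses and then to apply it directly.

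First, every positive operator is self-adjoint. Indeed, $\langle Tx,x\rangle\ge 0$ is real for all $x\in H$, and over $\C$ the polarization identity then gives $T=T^*$. So $T\in\ma_r(H)$ with $T\ge 0$ is a self-adjoint member of $\ma_r(H)$, and Theorem \ref{diagonalizable} applies verbatim. It produces an orthonormal basis of $H$ consisting of eigenvectors of $T$.

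One could also point out a simplification in the positive case, although it is not needed. For a positive operator restricted to a nonzero reducing subspace $M$, we have $m(T_M)=\inf\sigma(T_M)\ge 0$. Proposition \ref{eigenvalue} then forces $m(T_M)$ itself to be an eigenvalue, because $-m(T_M)\le 0$ can be an eigenvalue only when $m(T_M)=0$.

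I expect no genuine obstacle. The only point to verify is the implication ``positive $\Rightarrow$ self-adjoint'', which holds because $H$ is a complex Hilbert space.
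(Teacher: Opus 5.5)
Your proposal is correct and matches the paper. The paper states this corollary without a separate proof, as an immediate consequence of Theorem \ref{diagonalizable}, because a positive operator on a complex Hilbert space is self-adjoint. Your side remark that $m(T_M)$ itself must be the eigenvalue in the positive case is also accurate, though it is not needed.
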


\begin{proposition}\label{non decreasing}
Let $T\in \clb(H)$ be a positive operator.
Assume that $T$ has a strictly decreasing sequence of eigenvalues. Then $T\notin \ma_r(H)$.
\end{proposition}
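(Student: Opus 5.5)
Let $\lambda_1>\lambda_2>\lambda_3>\cdots$ be a strictly decreasing sequence of eigenvalues of the positive operator $T$. Since these are real numbers in $[0,\|T\|]$, they converge to some $\lambda\ge 0$ with $\lambda<\lambda_n$ for every $n$. For each $n$ I pick a unit eigenvector $e_n$ with $Te_n=\lambda_n e_n$. Eigenvectors of a self-adjoint operator for distinct eigenvalues are orthogonal, so $\{e_n\}$ is an orthonormal set. Let $M:=\bigvee\{e_n:n\in\N\}$. I will show that $M$ is a nonzero reducing subspace on which $T$ fails to attain its minimum modulus. This gives $T\notin\ma_r(H)$ directly from the definition.

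\emph{Step 1: $M\in\red_T$.} Each $e_n$ is an eigenvector, so $T(\operatorname{span}\{e_n\})\subseteq\operatorname{span}\{e_n\}$. By continuity, $T(M)\subseteq M$. Since $T=T^*$, the subspace $M$ is also invariant for $T^*$, so $M$ reduces $T$, exactly as in the proof of Theorem \ref{diagonalizable}. With respect to the orthonormal basis $\{e_n\}$ of $M$, the restriction $T_M$ is the diagonal operator $\operatorname{diag}(\lambda_1,\lambda_2,\dots)$, which is positive.

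\emph{Step 2: $m(T_M)=\lambda$.} For $x=\sum c_ne_n\in S_M$ we have $\|T_Mx\|^2=\sum\lambda_n^2|c_n|^2\ge\lambda^2$, so $m(T_M)\ge\lambda$. On the other hand $\|T_Me_n\|=\lambda_n\to\lambda$, so $m(T_M)\le\lambda$. Hence $m(T_M)=\lambda$.

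\emph{Step 3: the minimum is not attained.} By Proposition \ref{eigenvalue} applied to the positive operator $T_M$ on $M$, it suffices to show that neither $\lambda$ nor $-\lambda$ is an eigenvalue of $T_M$. Suppose $T_Mx=\pm\lambda x$ with $x=\sum c_ne_n\ne0$. Comparing coefficients gives $\lambda_nc_n=\pm\lambda c_n$ for every $n$. Since $\lambda_n>\lambda\ge0$, we have $\lambda_n\ne\pm\lambda$ in every case, including $\lambda=0$. So all $c_n=0$, a contradiction.

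Alternatively, one can bypass Proposition \ref{eigenvalue} using the strict inequality $\sum\lambda_n^2|c_n|^2>\lambda^2\sum|c_n|^2$ for $x\ne0$. This shows that $\|T_Mx\|>m(T_M)$ for every unit vector $x\in M$.

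The only point needing care is that the limit $\lambda$ must not be an eigenvalue of $T_M$, as opposed to $T$. This is handled by restricting to $M$, the span of the chosen eigenvectors only. With that choice, the argument is routine.
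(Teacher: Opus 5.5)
Your proof is correct and follows essentially the same route as the paper: restrict $T$ to a reducing subspace built from eigenvectors for the $\lambda_n$, note that $m(T_M)=\lim_n\lambda_n$, and use the strict inequality $\sum\lambda_n^2|c_n|^2>\lambda^2\sum|c_n|^2$ to conclude that the minimum is not attained. The differences are cosmetic. You take one unit eigenvector per eigenvalue instead of the full eigenspaces $N(T-\lambda_nI)$, and you compute $m(T_M)$ directly rather than citing Lemma \ref{m(T) for direct sum operators}; your coefficient comparison also makes explicit why $\lambda\notin\sigma_p(T_M)$, which the paper handles more loosely.
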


\begin{proof}
Let $(\lambda_n)$ be a strictly decreasing sequence of eigenvalues of
$T$. Since $(\lambda_n)$ is bounded below by $0$, there exists
$\lambda\geq 0$ such that $\lambda_n\searrow \lambda$. Let $M_n = N(T-\lambda_n I)$ for $n\in\N$. Since $T$ is positive, the eigenspaces $(M_n)_{n\in\mathbb N}$
corresponding to distinct eigenvalues are mutually orthogonal.

\smallskip
Define $M:=\bigoplus_{n=1}^{\infty} M_n$. Since $T$ is positive and each $M_n\in \red_T$, we get $M\in \red_T$.
Now by Lemma \ref{m(T) for direct sum operators}, $m(T_M)=\inf_{n\in \mathbb{N}} \lambda_n$. Since $\sigma_p(T_M)\subseteq \sigma_p(T)$, if $m(T_M)\in\sigma_p(T_M)$, then $m(T_M)=\lambda_k$ for some $k\in\N$. But $(\lambda_n)$ being strictly decreasing implies $\lambda_{k+1}<\lambda_k$, which contradicts that $m(T_M)=\inf_{n\in\N}\lambda_n$. Therefore, $m(T_M)\notin \sigma_p(T_M)$.
  
Now for any $(x_n)\in S_M$,
$$
\|T_M(x_n)\|^2=\sum_{n=1}^\infty \lambda_n^2\|x_n\|^2 > m(T_M)^2 \sum_{n=1}^\infty \|x_n\|^2 = m(T_M)^2.
$$
Hence $T_M\notin \ma(M)$, and so $T\notin \ma_r(H)$.
\end{proof}

\begin{theorem}\label{self-adjoint}
Let $T\in\ma_r(H)$ be self-adjoint, and let
$$
\Gamma:=\{|\lambda|:\lambda\in\sigma_p(T)\}\subseteq[0,\|T\|].
$$
Then the following hold:
\begin{enumerate}
\item $T$ is diagonalizable;

\item $\Gamma$ is a countable well-ordered subset of $[0,\|T\|]$;

\item If $P_\lambda$ denotes the orthogonal projection onto
      $N(T-\lambda I)$ for each $\lambda\in\sigma_p(T)$, then
      \begin{equation}
T=\sum_{\lambda\in\sigma_p(T)}\lambda P_\lambda\quad\text{and}\quad
|T|=\sum_{\lambda\in\sigma_p(T)}|\lambda|P_\lambda,
\end{equation}
where both series converge in the strong operator topology.
\end{enumerate}
\end{theorem}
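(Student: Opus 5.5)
Part (i) is exactly Theorem \ref{diagonalizable}. By Zorn's lemma it gives an orthonormal basis $\cle$ of eigenvectors of $T$. Grouping the basis vectors by eigenvalue shows that $H=\bigoplus_{\lambda\in\sigma_p(T)}N(T-\lambda I)$, where these eigenspaces are mutually orthogonal because $T$ is self-adjoint. Since $H$ is separable, $\cle$ is countable, so $\sigma_p(T)$ is countable and hence $\Gamma$ is countable. We have $\Gamma\subseteq[0,\|T\|]$ because $|\lambda|\le\|T\|$ for every eigenvalue $\lambda$.

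For (ii), the plan is to reduce to the positive case and use Proposition \ref{non decreasing}. A totally ordered set is well-ordered exactly when it has no strictly decreasing sequence, so it suffices to rule out a strictly decreasing sequence in $\Gamma$.

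Suppose $\gamma_1>\gamma_2>\cdots$ lie in $\Gamma$, and pick $\lambda_n\in\sigma_p(T)$ with $|\lambda_n|=\gamma_n$. Let $M_n=N(T-\lambda_nI)$ and $M=\bigoplus_n M_n$. Each $M_n$ reduces $T$, since $T$ is self-adjoint and $M_n$ is an eigenspace. The $M_n$ are mutually orthogonal, so $M$ is a closed reducing subspace and $T_M=\bigoplus_n \lambda_n I_{M_n}$.

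By Lemma \ref{m(T) for direct sum operators}, $m(T_M)=\inf_n\gamma_n=:\gamma$. Because $T_M$ is self-adjoint and in $\ma(M)$ (as $T\in\ma_r(H)$), Proposition \ref{eigenvalue} says $\pm\gamma$ is an eigenvalue of $T_M$. However, $\sigma_p(T_M)=\{\lambda_n\}$; this holds because $T_M$ is diagonal with respect to the orthogonal decomposition $\bigoplus_n M_n$. So $\gamma=|\lambda_k|=\gamma_k$ for some $k$, contradicting $\gamma_{k+1}<\gamma_k$.

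The same conclusion can be reached directly, as in the proof of Proposition \ref{non decreasing}: for a unit vector $x=(x_n)\in M$ we have $\|T_Mx\|^2=\sum\gamma_n^2\|x_n\|^2>\gamma^2$, since no $\gamma_n$ equals $\gamma$. I expect this step to be the only real obstacle. The subtle point is that $\Gamma$ uses absolute values, so one must work with $|T|_M$, or equivalently with $\|T_Mx\|$, rather than quote Proposition \ref{non decreasing} for $T$ itself, which need not be positive.

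For (iii), let $P_\lambda$ be the orthogonal projection onto $N(T-\lambda I)$. These projections are mutually orthogonal, and by (i) their ranges span $H$, so $\sum_\lambda P_\lambda=I$ in the strong operator topology, the sum being countable.

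For $x\in H$ write $x=\sum_\lambda P_\lambda x$. By continuity of $T$ and the relation $TP_\lambda=\lambda P_\lambda$, we get $Tx=\sum_\lambda\lambda P_\lambda x$. The convergence is unconditional because $\|\sum_{\lambda\in F}\lambda P_\lambda x\|^2\le\|T\|^2\sum_{\lambda\in F}\|P_\lambda x\|^2$ for every finite set $F$.

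Let $S=\sum_\lambda|\lambda|P_\lambda$ (strong sum). The same estimate shows $S$ is bounded, and it is clearly positive. Since the $P_\lambda$ are orthogonal, $S^2=\sum\lambda^2P_\lambda=T^2=T^*T$. By uniqueness of the positive square root, $S=|T|$, which completes the proof.
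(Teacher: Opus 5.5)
Your proof is correct and is essentially the paper's argument. Both use the closed span of the relevant eigenspaces as a reducing subspace and force the infimum to be attained there. The paper checks well-ordering directly, showing $\inf A\in A$ for every nonempty $A\subseteq\Gamma$ via $M_A=\bigoplus_{|\lambda|\in A}N(T-\lambda I)$, whereas you rule out strictly decreasing sequences, mirroring Proposition~\ref{non decreasing} but correctly working with $\|T_Mx\|$ since $T$ need not be positive. In part (iii), your identification of $\sum_\lambda|\lambda|P_\lambda$ with $|T|$ via uniqueness of the positive square root supplies a justification that the paper only asserts.
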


\begin{proof}
\textbf{Proof of (i):} By Theorem \ref{diagonalizable}, it follows that $T$ is diagonalizable. 

\smallskip
\textbf{Proof of (ii):} For each $\lambda\in\sigma_p(T)$, choose a unit vector $x_\lambda\in M_\lambda:=N(T-\lambda I)$. Then $\{x_\lambda:\lambda\in\sigma_p(T)\}$
is an orthonormal set in $H$. Since $H$ is separable, every orthonormal set is countable. Consequently, $\sigma_p(T)$ is countable, and hence $\Gamma$ is countable.

\smallskip
It remains to prove that $\Gamma$ is a well-ordered set. Let $A$ be a nonempty subset of $\Gamma$, and define
$$
M_A:=\bigoplus_{|\lambda|\in A} M_\lambda.
$$
Since $M_A$ is an orthogonal sum of eigenspaces of the self-adjoint
operator $T$, it is a nonzero reducing subspace for $T$. Thus, by the assumption $T\in\ma_r(H)$, the restriction $T_{M_A}$ is minimum attaining. 

For any $x\in M_A$, write $x=\sum_{|\lambda|\in A}x_\lambda$, where
$x_\lambda\in M_\lambda$. Then we have
$$
\|Tx\|^2
=\sum_{|\lambda|\in A}|\lambda|^2\|x_\lambda\|^2
\ge
(\inf A)^2\|x\|^2.
$$
Thus $m(T_{M_A})\ge\inf A$. On the other hand, for every $\varepsilon>0$, there exists $a\in A$ such that $a<\inf A+\varepsilon$. Choose $\lambda_0\in\sigma_p(T)$ with $|\lambda_0|=a$, and let
$u\in M_{\lambda_0}$ be a unit vector. Then $u\in M_A$ and
$$
m(T_{M_A})\le\|Tu\|=|\lambda_0|=a
<\inf A+\varepsilon.
$$
Since $\varepsilon>0$ is arbitrary, it follows that $m(T_{M_A})\leq\inf A$. Therefore, we obtain $m(T_{M_A})=\inf A$.

Since $T_{M_A}$ is minimum attaining, there exists $x_0\in M_A$ with $\|x_0\|=1$ such that
    $$
        \|Tx_0\|=m(T_{M_A})=\inf A.
    $$
Writing $x_0=\sum_{|\lambda|\in A}x^{'}_\lambda$, we have
$$
0=\|Tx_0\|^2-(\inf A)^2\|x_0\|^2
=\sum_{|\lambda|\in A}
\bigl(|\lambda|^2-(\inf A)^2\bigr)\|x^{'}_\lambda\|^2.
$$
Since each summand is nonnegative and $x_0\neq0$, there exists
$\lambda\in\sigma_p(T)$ with $x^{'}_\lambda\neq0$. For such a $\lambda$ we must have $|\lambda|^2=(\inf A)^2$. As $A\subseteq[0,\|T\|]$, it follows that $|\lambda|=\inf A$, and hence $\inf A\in A$.

Therefore, every nonempty subset of $\Gamma$ has a least element.
Consequently, $\Gamma$ is well-ordered.
\medskip

\textbf{Proof of (iii):} Since $T$ is diagonalizable, we have
$$
H=\bigoplus_{\lambda\in\sigma_p(T)} M_\lambda,\text{ where } M_\lambda:=N(T-\lambda I).
$$
Since $T$ is self-adjoint, eigenspaces corresponding to distinct eigenvalues are mutually orthogonal. Therefore, if $P_\lambda$ is the orthogonal projection onto $M_\lambda$, then $\sum_{\lambda\in\sigma_p(T)}P_\lambda=I$ in the strong operator topology. Thus for every $x\in H$, we obtain $$Tx=\sum_{\lambda\in\sigma_p(T)}\lambda P_\lambda x \text{ and } |T|x=\sum_{\lambda\in\sigma_p(T)}|\lambda|P_\lambda x,$$ where both series converge in the strong operator topology. 
\end{proof}

\begin{theorem}\label{Red Min Necessary}
    Let $T\in\ma_r(H)$ be positive. Then the following hold: 
    \begin{enumerate}
        \item The set $\Lambda:=\sigma_p(T)$ is a countable well-ordered subset of $[0,\|T\|]$;

        \item There exist mutually orthogonal projections $(P_\lambda)_{\lambda\in\Lambda}$ on $H$ with $\sum_{\lambda\in\Lambda}P_\lambda=I$  such that 
        \begin{equation}T=\sum_{\lambda\in\Lambda}\lambda P_\lambda,
        \end{equation}
        where the series converges in the strong operator topology.
    \end{enumerate}
\end{theorem}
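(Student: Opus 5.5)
This is a specialization of Theorem \ref{self-adjoint} to the positive case, so the plan is to invoke that theorem and translate its conclusions using positivity. A positive $T$ is self-adjoint, so Theorem \ref{self-adjoint} applies directly to $T\in\ma_r(H)$.

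\textbf{Step 1 (identifying $\Lambda$ with $\Gamma$).} Since $T\ge 0$, every eigenvalue $\lambda$ satisfies $\lambda=\langle Tx,x\rangle\ge 0$ for a unit eigenvector $x$. Moreover $\lambda\le\|T\|$. Hence $\sigma_p(T)\subseteq[0,\|T\|]$ and $|\lambda|=\lambda$ for every $\lambda\in\sigma_p(T)$. The set $\Gamma=\{|\lambda|:\lambda\in\sigma_p(T)\}$ of Theorem \ref{self-adjoint} therefore coincides with $\Lambda=\sigma_p(T)$. Part (ii) of that theorem then gives that $\Lambda$ is a countable well-ordered subset of $[0,\|T\|]$, which is assertion (i). Alternatively, well-ordering can be seen directly from Proposition \ref{non decreasing}. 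For a totally ordered subset of $\mathbb R$, being well-ordered is equivalent to the absence of a strictly decreasing sequence, and such a sequence of eigenvalues is excluded for $T\in\ma_r(H)$.

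\textbf{Step 2 (spectral representation).} For $\lambda\in\Lambda$, let $P_\lambda$ be the orthogonal projection onto $N(T-\lambda I)$. These projections are nonzero. They are mutually orthogonal because eigenspaces of a self-adjoint operator for distinct eigenvalues are orthogonal. By Corollary \ref{positive diagonalizable} (equivalently, Theorem \ref{self-adjoint}(i)), $H$ has an orthonormal basis of eigenvectors. Consequently $H=\bigoplus_{\lambda\in\Lambda}N(T-\lambda I)$, so $\sum_{\lambda\in\Lambda}P_\lambda=I$ in the strong operator topology. Part (iii) of Theorem \ref{self-adjoint} then yields $T=\sum_{\lambda\in\Lambda}\lambda P_\lambda$ strongly. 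This is also checkable by hand: for $x\in H$, the partial sums $\sum_{\lambda\in F}\lambda P_\lambda x$ over finite $F\subseteq\Lambda$ form a Cauchy net, since $\|\sum_{\lambda\in F}\lambda P_\lambda x\|^2\le\|T\|^2\sum_{\lambda\in F}\|P_\lambda x\|^2$. The limit equals $Tx$ because $T$ is continuous and $TP_\lambda=\lambda P_\lambda$.

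\textbf{Main difficulty.} There is essentially no obstacle, since the substantive work was done in Theorems \ref{diagonalizable} and \ref{self-adjoint}. The only point needing care is the identification $\Gamma=\Lambda$. The map $\lambda\mapsto|\lambda|$ collapses $\pm\lambda$ in general, so well-ordering of $\Gamma$ does not transfer to $\sigma_p(T)$ for arbitrary self-adjoint $T$. Positivity makes this map the identity on $\sigma_p(T)$, so the transfer is immediate here.
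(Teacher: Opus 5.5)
Your proposal is correct and follows essentially the same route as the paper: positivity gives $\sigma_p(T)\subseteq[0,\|T\|]$, so $\Gamma=\Lambda$, and both assertions then follow from Theorem~\ref{self-adjoint}. Your extra remarks are valid but not needed for the result: the alternative well-ordering argument via Proposition~\ref{non decreasing}, and the direct check of strong convergence.
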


\begin{proof}
    Since $T$ is positive, we have $\sigma_p(T)\subseteq [0,\|T\|]$. Hence
    $$\Gamma:=\{|\lambda|:\lambda\in\sigma_p(T)\}
=\sigma_p(T)=\Lambda.$$
The conclusions follow immediately from Theorem \ref{self-adjoint}.
\end{proof}

\begin{theorem}\label{Red min sufficient}
Let $\Lambda\subseteq[0,\infty)$ be a bounded countable well-ordered set, and
let $(P_\lambda)_{\lambda\in\Lambda}$ be mutually orthogonal projections with $\sum_{\lambda\in\Lambda}P_\lambda=I$
in the strong operator topology. Then the operator
$$
T:=\sum_{\lambda\in\Lambda}\lambda P_\lambda\in\ma_r(H)_+.
$$
\end{theorem}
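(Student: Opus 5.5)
First, $T$ is a well-defined positive operator. Since $\Lambda$ is bounded and the $P_\lambda$ are mutually orthogonal with $\sum P_\lambda=I$ strongly, the series $\sum_{\lambda}\lambda P_\lambda x$ converges for every $x$. It satisfies $\|Tx\|^2=\sum_\lambda \lambda^2\|P_\lambda x\|^2\le(\sup\Lambda)^2\|x\|^2$, and $\langle Tx,x\rangle=\sum_\lambda\lambda\|P_\lambda x\|^2\ge0$. So $T\in\clb(H)_+$.

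The main task is to show that $T_M$ is minimum attaining for a fixed nonzero $M\in\red_T$. Let $Q$ be the orthogonal projection onto $M$, so $QT=TQ$. The key step is to show that $Q$ commutes with every $P_\lambda$. For this I would use that $P_\lambda$ is a strong limit of polynomials in $T$: on each eigenspace $R(P_\mu)$ the operator $T$ acts as $\mu I$, and $R(P_\lambda)=N(T-\lambda I)$ because the $P_\mu$ are orthogonal with sum $I$. Then $M\in\red_T$ gives $Q(N(T-\lambda I))\subseteq N(T-\lambda I)$, since $T Qx=QTx=\lambda Qx$ for $x\in N(T-\lambda I)$. Hence $QP_\lambda=P_\lambda QP_\lambda$, and taking adjoints yields $QP_\lambda=P_\lambda Q$. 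Consequently $M=\bigoplus_{\lambda\in\Lambda} QP_\lambda H$, where each $M_\lambda:=P_\lambda QH=M\cap R(P_\lambda)$ is an eigenspace of $T_M$ (possibly zero).

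Next I would let $A:=\{\lambda\in\Lambda: M_\lambda\neq\{0\}\}$. This set is nonempty because $M\neq\{0\}$ and $\sum_\lambda P_\lambda x=x$ for $x\in M$. Since $\Lambda$ is well-ordered, $A$ has a least element $\lambda_0=\min A$. For $x\in S_M$, writing $x=\sum_{\lambda\in A}P_\lambda x$ with $P_\lambda x\in M_\lambda$, we get
\[
\|Tx\|^2=\sum_{\lambda\in A}\lambda^2\|P_\lambda x\|^2\ge\lambda_0^2 .
\]
Any unit vector $u\in M_{\lambda_0}$ gives $\|Tu\|=\lambda_0$. Hence $m(T_M)=\lambda_0$ is attained, and $T_M\in\ma(M)$. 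This is the same computation as in Lemma \ref{m(T) for direct sum operators} and Theorem \ref{self-adjoint}, and here well-ordering is exactly what guarantees that $\inf A\in A$.

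The only step needing care is the commutation $QP_\lambda=P_\lambda Q$, and in particular the identification $R(P_\lambda)=N(T-\lambda I)$. I would verify the latter directly: if $Tx=\lambda x$, then $\sum_\mu(\mu-\lambda)P_\mu x=0$, so $(\mu-\lambda)\|P_\mu x\|^2=0$ for each $\mu$. Thus $P_\mu x=0$ for $\mu\ne\lambda$, and $x=P_\lambda x$. With this in hand, the proof avoids any functional calculus.
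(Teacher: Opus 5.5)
Your proof is correct and follows the paper's argument. You commute the projection onto $M$ with each $P_\lambda$, use well-ordering to take the least $\lambda_0$ with $P_{\lambda_0}M\neq\{0\}$, and show that $m(T_M)=\lambda_0$ is attained on $P_{\lambda_0}M$. The only difference is that you verify $QP_\lambda=P_\lambda Q$ directly via $R(P_\lambda)=N(T-\lambda I)$ instead of citing the spectral theorem, which makes the argument self-contained and makes your passing remark about strong limits of polynomials in $T$ unnecessary.
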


\begin{proof}
Clearly, $T\ge0$. Let $M\ne\{0\}$ reduce $T$, and let $P_M$ be the
orthogonal projection onto $M$. Since $P_MT=TP_M$, the spectral theorem gives
$P_MP_\lambda=P_\lambda P_M$ for every $\lambda\in\Lambda$. Thus
$P_\lambda M\subseteq M$.

Consider the set
$\Lambda_M:=\{\lambda\in\Lambda:P_\lambda M\ne\{0\}\}$.
This set is nonempty, and because $\Lambda$ is well-ordered, it has a least
element $\lambda_0$. Choose a unit vector $x_0\in P_{\lambda_0}M$. Then
$Tx_0=\lambda_0x_0$. Also for every unit vector $x\in M$, we have
$$
\|Tx\|^2=\sum_{\lambda\in\Lambda_M}\lambda^2\|P_\lambda x\|^2
\ge\lambda_0^2.
$$
Hence $m(T_M)=\lambda_0=\|Tx_0\|$. Therefore, $T_M$ is minimum attaining.
Since $M\in \red_T$ is arbitrary, $T\in\ma_r(H)_+$.
\end{proof}

\begin{theorem}\label{+M_r class equiv}
Let $T\in\clb(H)$ be positive. Then $T\in\ma_r(H)_+$ if and only if there exist a countable well-ordered set $\Lambda\subseteq[0,\|T\|]$ and a family of mutually
orthogonal projections $(P_\lambda)_{\lambda\in\Lambda}$ such that
$$
\sum_{\lambda\in\Lambda}P_\lambda=I\, \text{ and }\,
T=\sum_{\lambda\in\Lambda}\lambda P_\lambda,
$$
where the above two series converge in the strong operator topology. Furthermore,
$\sigma(T)=\overline{\Lambda}$.
\end{theorem}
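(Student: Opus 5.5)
The equivalence will be assembled from Theorem \ref{Red Min Necessary} and Theorem \ref{Red min sufficient}; the only genuinely new content is the identity $\sigma(T)=\overline{\Lambda}$.

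For the forward direction, take $T\in\ma_r(H)_+$. Theorem \ref{Red Min Necessary} gives that $\Lambda=\sigma_p(T)$ is a countable well-ordered subset of $[0,\|T\|]$. It also gives the projections $P_\lambda$ onto $N(T-\lambda I)$, which are mutually orthogonal, sum to $I$ strongly, and satisfy $T=\sum_{\lambda}\lambda P_\lambda$. For the converse, suppose such a $\Lambda\subseteq[0,\|T\|]$ and family $(P_\lambda)$ are given. Then $\Lambda$ is bounded, so Theorem \ref{Red min sufficient} applies directly and yields $T\in\ma_r(H)_+$.

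\emph{Nonzero projections.} I will interpret the statement with the $P_\lambda$ nonzero, as in the introduction, discarding any $\lambda$ with $P_\lambda=0$. This keeps $\Lambda$ countable and well-ordered and leaves both series unchanged. With all $P_\lambda\neq0$, each $\lambda\in\Lambda$ is an eigenvalue, since any nonzero $x\in R(P_\lambda)$ satisfies $Tx=\lambda x$. Hence $\Lambda\subseteq\sigma_p(T)\subseteq\sigma(T)$. Because $\sigma(T)$ is closed, this gives $\overline{\Lambda}\subseteq\sigma(T)$.

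\emph{Reverse inclusion.} Fix $\mu\notin\overline{\Lambda}$ and set $d:=\operatorname{dist}(\mu,\Lambda)>0$. I will show $T-\mu I$ is invertible by exhibiting the inverse $S:=\sum_{\lambda\in\Lambda}(\lambda-\mu)^{-1}P_\lambda$.
\begin{itemize}
\item For each $x$, the vectors $P_\lambda x$ are mutually orthogonal with $\sum_\lambda\|P_\lambda x\|^2=\|x\|^2$.
\item Hence the series defining $Sx$ converges, and $\|Sx\|\le d^{-1}\|x\|$, so $S$ is bounded.
\item A termwise computation using $P_\lambda P_{\lambda'}=\delta_{\lambda\lambda'}P_\lambda$ gives $S(T-\mu I)=(T-\mu I)S=\sum_\lambda P_\lambda=I$ strongly.
\end{itemize}
Thus $\mu\notin\sigma(T)$, which proves $\sigma(T)\subseteq\overline{\Lambda}$.

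The main point needing care is the convergence and interchange of these SOT series. Since everything is diagonal with respect to the orthogonal decomposition $H=\bigoplus_\lambda R(P_\lambda)$, I expect it to be routine. One could alternatively cite the fact that for a normal diagonalizable operator the spectrum is the closure of the set of eigenvalues.
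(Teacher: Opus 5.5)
Your proposal is correct and follows the paper's proof: necessity comes from Theorem \ref{Red Min Necessary}, sufficiency from Theorem \ref{Red min sufficient}, and $\sigma(T)=\overline{\Lambda}$ from the diagonal representation, which you write out explicitly through the inverse $\sum_{\lambda}(\lambda-\mu)^{-1}P_\lambda$ of $T-\mu I$. Your requirement that the $P_\lambda$ be nonzero is a genuine point that the paper's statement leaves implicit, since without it the spectral identity fails; for example, $T=I$ with $\Lambda=\{0,1\}$, $P_0=0$, $P_1=I$ gives $\sigma(T)=\{1\}\neq\overline{\Lambda}$.
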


\begin{proof}
The necessity is Theorem \ref{Red Min Necessary}, and the sufficiency is
Theorem \ref{Red min sufficient}. The spectral identity follows immediately
from the diagonal representation.
\end{proof}

\begin{corollary}\label{spectrum positive alpha}
Let $T\in\ma_r(H)_+$. Then $\sigma_p(T)$ is countable and well-ordered. Moreover,
$\sigma(T)=\overline{\sigma_p(T)}$.
\end{corollary}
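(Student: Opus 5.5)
The corollary follows almost directly from Theorem \ref{+M_r class equiv} and Theorem \ref{Red Min Necessary}; what remains is to identify $\Lambda$ with $\sigma_p(T)$ and prove the spectral identity carefully. First, Theorem \ref{Red Min Necessary}(i) gives that $\Lambda=\sigma_p(T)$ is a countable well-ordered subset of $[0,\|T\|]$. Part (ii) of the same theorem gives $T=\sum_{\lambda\in\Lambda}\lambda P_\lambda$ with $\sum P_\lambda=I$ strongly, where $P_\lambda$ is the projection onto $N(T-\lambda I)$ and so is nonzero for each $\lambda\in\Lambda$. This settles the first assertion.

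For $\sigma(T)=\overline{\sigma_p(T)}$, the inclusion $\overline{\sigma_p(T)}\subseteq\sigma(T)$ holds because $\sigma_p(T)\subseteq\sigma(T)$ and $\sigma(T)$ is closed. For the reverse inclusion, take $\mu\in\C\setminus\overline{\Lambda}$ and set $d:=\operatorname{dist}(\mu,\Lambda)>0$. Define
$$R:=\sum_{\lambda\in\Lambda}(\lambda-\mu)^{-1}P_\lambda,$$
which converges strongly since the $P_\lambda$ are mutually orthogonal with sum $I$ and the coefficients are bounded by $d^{-1}$. Thus $\|R\|\le d^{-1}$.

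It remains to check $R(T-\mu I)=(T-\mu I)R=I$. Apply both products to $x=\sum_\lambda P_\lambda x$ and use $TP_\lambda=\lambda P_\lambda$. Each product then acts on $P_\lambda x$ as multiplication by $(\lambda-\mu)^{-1}(\lambda-\mu)=1$, so it is the identity. Strong continuity and boundedness let us pass to the infinite sum. Hence $\mu\notin\sigma(T)$, and therefore $\sigma(T)\subseteq\overline{\Lambda}=\overline{\sigma_p(T)}$.

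The only step needing care is this last one: justifying the interchange of the strongly convergent series with the bounded operators. It is handled by orthogonality, since $\|\sum_{\lambda\in F} c_\lambda P_\lambda x\|^2=\sum_{\lambda\in F}|c_\lambda|^2\|P_\lambda x\|^2$ for finite $F$, so Cauchy partial sums give convergence. Note that the well-ordering is not needed for the spectral identity; it only enters through Theorem \ref{Red Min Necessary}.
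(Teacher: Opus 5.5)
Your proof is correct and follows the paper's route. The paper deduces the corollary directly from Theorem \ref{+M_r class equiv}, which rests on Theorem \ref{Red Min Necessary}, and asserts that $\sigma(T)=\overline{\sigma_p(T)}$ ``follows immediately from the diagonal representation''; your resolvent $R=\sum_{\lambda\in\Lambda}(\lambda-\mu)^{-1}P_\lambda$ fills in that omitted detail, and taking the $P_\lambda$ to be the nonzero eigenprojections avoids the fact that $\sigma(T)=\overline{\Lambda}$ would fail if zero projections were allowed.
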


\begin{proof}
This is immediate from Theorem \ref{+M_r class equiv}.
\end{proof}

\begin{remark}
Here, the multiplicities of eigenvalues are not taken into account. Note that an eigenvalue may have infinite multiplicity in some cases. For example, the
identity operator on an infinite dimensional Hilbert space has the eigenvalue
$1$ with infinite multiplicity.
\end{remark}

Recall that an operator $T\in\clb(H)$ belong to $\beta(H)$ if for every nonzero $M\in\red_T$, the restriction $T_M$ is norm attaining. We next establish a relation between the classes $\ma_r(H)$ and $\beta(H)$.

\begin{corollary}\label{alpha and beta connection}
Let $T\in\clb(H)_+$ and $\|T\|\le\delta$. Then
$$
T\in\ma_r(H)\quad\Longleftrightarrow\quad \delta I-T\in\beta(H).
$$
\end{corollary}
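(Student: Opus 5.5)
The plan is to show directly that the reducing subspaces of $T$ and of $S:=\delta I-T$ coincide, and that on each such subspace minimum attainment of $T$ corresponds exactly to norm attainment of $S$. Since $S$ differs from $T$ by a scalar multiple of $I$, a projection commutes with $T$ if and only if it commutes with $S$. Hence $\red_T=\red_S$, and for every $M\in\red_T$ we have $S_M=\delta I_M-T_M$.

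Next, fix a nonzero $M\in\red_T$. The operator $T_M$ is positive with $\|T_M\|\le\delta$, so $S_M=\delta I_M-T_M$ is positive, and both quadratic forms take values in $[0,\delta]$. For positive operators the minimum modulus and the norm are given by the quadratic form:
\[
m(T_M)=\inf_{x\in S_M}\langle T x,x\rangle,\qquad \|S_M\|=\sup_{x\in S_M}\langle Sx,x\rangle .
\]
The first identity is \cite[Proposition 2.2]{Carvajal 2}, as already used in Proposition \ref{equivalence}, and the second is standard. Consequently
\[
\|S_M\|=\delta-m(T_M).
\]

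It remains to match the attainment conditions. For positive operators, $T_M$ is minimum attaining if and only if $m(T_M)$ is an eigenvalue of $T_M$ (Proposition \ref{eigenvalue}). Likewise, a positive operator is norm attaining if and only if its norm is an eigenvalue; this follows from the equality case of Cauchy--Schwarz, or by applying Lemma \ref{ma set} in the form $\|Sx\|=\|S\|$ with $\|S\|^2I-S^2\ge 0$. Moreover, $T_Mx=m(T_M)x$ holds exactly when $S_Mx=(\delta-m(T_M))x=\|S_M\|x$. So $T_M\in\ma(M)$ if and only if $S_M\in\na(M)$. Letting $M$ range over $\red_T=\red_S$ gives the equivalence $T\in\ma_r(H)\iff \delta I-T\in\beta(H)$.

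The only delicate point is that equality between attainment of a quadratic form and attainment of $\|\cdot\|$ or $m(\cdot)$ must be justified through eigenvectors, not merely through values. For positive $A$, the quantity $\langle Ax,x\rangle$ attains $\|A\|$ exactly when $Ax=\|A\|x$, because $\|A\|I-A\ge0$ and a positive operator with vanishing quadratic form at $x$ annihilates $x$. The same argument applies to $A-m(A)I\ge 0$. With this in hand, the rest of the argument is routine.
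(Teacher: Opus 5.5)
Your proposal is correct and follows the paper's proof essentially step for step. Both arguments identify $\red_T=\red_{\delta I-T}$, use $\|(\delta I-T)_M\|=\delta-m(T_M)$ for $0\le T_M\le\delta I_M$, and match attainment through the eigenvector equation $T_Mx=m(T_M)x \iff (\delta I-T)_Mx=\|(\delta I-T)_M\|x$. You make explicit two justifications that the paper leaves implicit: the quadratic-form formulas for $m(\cdot)$ and $\|\cdot\|$ of positive operators, and the fact that a positive operator attains its norm exactly at eigenvectors for its norm. One minor point: you use $S_M$ both for the unit sphere of $M$ and for the restriction of $S$ to $M$, which should be disambiguated.
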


\begin{proof}
Let $S:=\delta I-T$. Since $S$ is an affine function of $T$, a closed subspace
reduces $S$ if and only if it reduces $T$. Let $M\ne\{0\}$ be such a reducing
subspace. Since $0\le T_M\le\delta I_M$, we have
$$
\|S_M\|=\delta-m(T_M).
$$
If $T_M$ attains its minimum at a unit vector $x\in M$, then $T_M x=m(T_M)x$, that is,
$\|Sx\|=\delta-m(T_M)=\|S_M\|$. Therefore, $S_M$ attains its norm.

\smallskip
Conversely, if $S_M$ attains its norm at a unit vector $x\in M$, positivity of $S_M$ implies that
$$
Sx=\|S_M\|x=(\delta-m(T_M))x,
$$
and hence $Tx=m(T_M)x$. Thus $T_M$ is minimum attaining if and only if
$S_M$ is norm attaining. The conclusion follows for every reducing $M$.
\end{proof}

\begin{corollary}\label{self mod T}
Let $T\in\ma_r(H)$ be self-adjoint. Then $|T|\in\ma_r(H)_+$.
\end{corollary}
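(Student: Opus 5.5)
The proof will combine Theorem \ref{self-adjoint} with the sufficiency part of the positive characterization, Theorem \ref{Red min sufficient}. Since $T\in\ma_r(H)$ is self-adjoint, Theorem \ref{self-adjoint} gives two things. First, $H=\bigoplus_{\lambda\in\sigma_p(T)}N(T-\lambda I)$. Second, $\Gamma=\{|\lambda|:\lambda\in\sigma_p(T)\}$ is a countable well-ordered subset of $[0,\|T\|]$. We also have the representation $|T|=\sum_{\lambda\in\sigma_p(T)}|\lambda|P_\lambda$ in the strong operator topology. The plan is to regroup this series according to the value of $|\lambda|$, so that it has the form required in Theorem \ref{Red min sufficient}.

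Concretely, for each $\gamma\in\Gamma$ I define $Q_\gamma:=P_\gamma+P_{-\gamma}$ when both $\pm\gamma\in\sigma_p(T)$ and $\gamma>0$, and $Q_\gamma:=P_{\lambda}$ for the unique $\lambda\in\sigma_p(T)$ with $|\lambda|=\gamma$ otherwise. Since eigenspaces of the self-adjoint operator $T$ for distinct eigenvalues are mutually orthogonal, $Q_\gamma$ is the orthogonal projection onto $N(T-\gamma I)\oplus N(T+\gamma I)$. The family $(Q_\gamma)_{\gamma\in\Gamma}$ consists of mutually orthogonal nonzero projections. Because each $Q_\gamma$ is a finite sum of the $P_\lambda$ and $\sum_\lambda P_\lambda=I$ strongly, we get $\sum_{\gamma\in\Gamma}Q_\gamma=I$ in the strong operator topology. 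Rearranging the unconditionally convergent orthogonal series then gives $|T|x=\sum_{\gamma\in\Gamma}\gamma Q_\gamma x$ for every $x\in H$.

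With $\Lambda:=\Gamma$, which is bounded, countable and well-ordered, Theorem \ref{Red min sufficient} applies directly and yields $|T|\in\ma_r(H)_+$. The only point needing care is that regrouping the strongly convergent series is legitimate. This holds because, for fixed $x$, the vectors $P_\lambda x$ are mutually orthogonal, so the convergence is unconditional in norm and any partition into finite blocks preserves the sum. I do not expect a substantive obstacle beyond this bookkeeping. The essential content, the well-ordering of $\Gamma$, is already supplied by Theorem \ref{self-adjoint}(ii).
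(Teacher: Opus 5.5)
Your proposal is correct and follows the paper's proof: Theorem \ref{self-adjoint} supplies diagonalizability and the well-ordering of $\Gamma$, and Theorem \ref{Red min sufficient} then gives $|T|\in\ma_r(H)_+$. The paper only asserts that the distinct eigenvalues of $|T|$ form such a set. Your regrouping $Q_\gamma=P_\gamma+P_{-\gamma}$ carries out the bookkeeping that this assertion leaves implicit.
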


\begin{proof}
By Theorem \ref{self-adjoint}, we have $|T|$ is diagonalizable and its distinct
eigenvalues form a bounded countable well-ordered subset of $[0,\infty)$. Hence by
Theorem \ref{Red min sufficient}, we obtain $|T|\in\ma_r(H)_+$.
\end{proof}

\begin{remark}\label{self-adj iff}
For $T\in\clb(H)_{sa}$, $T\in\ma_r(H)$ if and only if
$|T|\in\ma_r(H)_+$. The reverse implication follows from
Proposition \ref{equivalence}(ii).
\end{remark}

\section{Stability and Density of $\ma_r(H)$}

In this section, we establish stability results for the class $\ma_r(H)$ under finite rank and compact perturbations. We further prove that every operator in $\clb(H)$ can be approximated in operator norm by operators in $\ma_r(H)$.

\begin{proposition}\label{finite rank perturbation}
Let $\Lambda\subseteq [0,\infty)$ be a bounded countable well-ordered set, and let
$(P_\lambda)_{\lambda\in\Lambda}$ be mutually orthogonal projections with $\sum_{\lambda\in\Lambda}P_\lambda=I$ in the strong operator topology. Suppose that
$F\in\mathcal F(H)_{sa}$ satisfies
$FP_\lambda=P_\lambda F$ for $\lambda\in\Lambda.$
Then
\begin{equation}
T=F+\sum_{\lambda\in\Lambda}\lambda P_\lambda
\end{equation}
belongs to $\mathcal M_r(H)$.
\end{proposition}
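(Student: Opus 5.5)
The plan is to show that $T$ is a self-adjoint diagonal operator whose eigenvalue moduli form a bounded countable well-ordered set. Then $|T|$ falls under Theorem~\ref{Red min sufficient}, and Proposition~\ref{equivalence}(ii) gives $T\in\ma_r(H)$. Note that $T$ is self-adjoint, since $F$ is self-adjoint and $\sum_{\lambda}\lambda P_\lambda$ is positive.

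\textbf{Step 1 (only finitely many blocks of $F$ are nonzero).} Since $FP_\lambda=P_\lambda F$, the finite dimensional space $R(F)$ is invariant under every $P_\lambda$. Hence $\{P_\lambda|_{R(F)}\}_{\lambda\in\Lambda}$ is a family of mutually orthogonal projections on a finite dimensional space, so only finitely many are nonzero; call the corresponding indices $\lambda_1,\dots,\lambda_k$. For any other $\lambda$ we have $P_\lambda F=P_\lambda P_{R(F)}F=0$, hence also $FP_\lambda=0$. Writing $H_\lambda=R(P_\lambda)$, it follows that $F=\bigoplus_{\lambda} F_\lambda$ with $F_\lambda:=F|_{H_\lambda}$ self-adjoint and finite rank on $H_\lambda$, and $F_\lambda=0$ unless $\lambda\in\{\lambda_1,\dots,\lambda_k\}$. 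This step is the main obstacle: the essential point is that the commutation hypothesis forces all but finitely many blocks of $F$ to vanish.

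\textbf{Step 2 (diagonalize).} Each $H_\lambda$ reduces $T$, and $T|_{H_\lambda}=\lambda I+F_\lambda$. For $\lambda\notin\{\lambda_1,\dots,\lambda_k\}$ this block is $\lambda I$. For $\lambda=\lambda_j$, the finite rank self-adjoint operator $F_{\lambda_j}$ is diagonalizable by the spectral theorem on $R(F_{\lambda_j})$ together with the zero eigenvalue on $N(F_{\lambda_j})$. So $T|_{H_{\lambda_j}}$ is diagonal with eigenvalues in $\{\lambda_j\}\cup\{\lambda_j+\mu:\mu\in\sigma_p(F_{\lambda_j})\setminus\{0\}\}$, which contributes only finitely many new values. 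Therefore $H$ has an orthonormal basis of eigenvectors of $T$, and
\[
\sigma_p(T)\subseteq \Lambda\cup S
\]
with $S$ a finite set of reals.

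\textbf{Step 3 (conclude).} We have $|T|=\sum_{t\in\Gamma}tQ_t$, where $\Gamma=\{|\gamma|:\gamma\in\sigma_p(T)\}$ and $Q_t$ is the sum of the eigenprojections of $T$ for $\gamma=\pm t$. These $Q_t$ are mutually orthogonal and sum to $I$ strongly. Moreover, $\Gamma\subseteq\Lambda\cup\{|s|:s\in S\}$, which is the union of a well-ordered set with a finite set. A finite union of well-ordered subsets of $\mathbb R$ is well-ordered, since any nonempty subset has a least element in each piece and one takes the minimum of these. Subsets of well-ordered sets are well-ordered, so $\Gamma$ is a bounded countable well-ordered subset of $[0,\infty)$. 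Theorem~\ref{Red min sufficient} then gives $|T|\in\ma_r(H)_+$, and Proposition~\ref{equivalence}(ii) yields $T\in\ma_r(H)$.
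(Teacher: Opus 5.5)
Your proof is correct and takes the same route as the paper. You decompose $T=\bigoplus_{\lambda}(\lambda I_{H_\lambda}+F_\lambda)$ and observe that only finitely many blocks $F_\lambda$ are nonzero. You then diagonalize and note that the eigenvalue moduli form a bounded countable well-ordered set, so Theorem~\ref{Red min sufficient} applies to $|T|$ and Proposition~\ref{equivalence}(ii) finishes. Two points that the paper passes over quickly are made explicit in your version: Step~1 shows why only finitely many blocks are nonzero, using the invariance of $R(F)$ under each $P_\lambda$, and Step~3 merges the eigenprojections for $\pm t$ into a single $Q_t$ before invoking Theorem~\ref{Red min sufficient}.
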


\begin{proof}
    Let $H_\lambda=R(P_\lambda)$ and $F_\lambda=F|_{H_\lambda}$ for $\lambda\in\Lambda$. Since $FP_\lambda=P_\lambda F$ for every $\lambda\in\Lambda$, each
$H_\lambda$ reduces $F$. Hence by the definition of $T$, each
$H_\lambda$ also reduces $T$. Thus we can write $T=\bigoplus_{\lambda\in\Lambda}
(\lambda I_{H_\lambda}+F_\lambda)$. Since $F$ has finite rank, $J:=\{\lambda\in\Lambda:F_\lambda\neq0\}$ is a finite set. Clearly, if $\lambda\notin J$, then $\lambda I_{H_\lambda}+F_\lambda=\lambda I_{H_\lambda}$.

Now let $\lambda\in J$. Since $F_\lambda$ is self-adjoint, there exists
an orthonormal basis of $H_\lambda$ consisting of eigenvectors of
$F_\lambda$. Therefore, the same basis consists of eigenvectors of
$\lambda I_{H_\lambda}+F_\lambda$. Hence $\lambda I_{H_\lambda}+F_\lambda$ is diagonalizable. It follows that every summand in the above orthogonal direct sum is
diagonalizable, and hence $T$ is diagonalizable. Moreover,
$$
\sigma_p(T)
=
(\Lambda\setminus J)
\cup
\bigcup_{\lambda\in J}
\sigma_p(\lambda I_{H_\lambda}+F_\lambda).
$$
Since $J$ is finite and each
$\sigma_p(\lambda I_{H_\lambda}+F_\lambda)$ is finite, $\sigma_p(T)$ is obtained from the well-ordered set $\Lambda$ by removing finitely many points and adjoining finitely many points. Hence $\sigma_p(T)$ is well-ordered countable set. Consequently, $\Gamma:=\{|\mu| : \mu\in\sigma_p(T)\}\subseteq[0,\|T\|]$ is a well-ordered countable set.

Let $Q_\mu$ denote the orthogonal projection onto
$N(T-\mu I)$ for each $\mu\in\sigma_p(T)$. Since $T$ is self-adjoint, the sequence $(Q_\mu)_{\mu\in\sigma_p(T)}$ is mutually orthogonal and $\sum_{\mu\in \sigma_p(T)}Q_\mu=I$. Then we obtain
$$
T=\sum_{\mu\in\sigma_p(T)}\mu Q_\mu \text{ and } |T|=\sum_{\mu\in\sigma_p(T)}|\mu| Q_\mu,
$$
where the series converges in the strong operator topology. By Theorem \ref{Red min sufficient}, we get $|T|\in\ma_r(H)_+$. The conclusion follows from Remark \ref{self-adj iff}.
\end{proof}

\begin{proposition}\label{compact perturbation lambda}
Let $\Lambda\subseteq [0,\infty)$ be a bounded countable well-ordered set and $(P_\lambda)_{\lambda\in\Lambda}$ be mutually orthogonal projections with $\sum_{\lambda\in\Lambda}P_\lambda=I$
in the strong operator topology. Suppose
$K\in\mathcal K(H)_+$ satisfies $KP_\lambda=P_\lambda K$ and $K|_{R(P_\lambda)}
\leq \lambda I_{R(P_\lambda)}$
for all $\lambda\in\Lambda$. Then the operator
\begin{equation}
T=\sum_{\lambda\in\Lambda}\lambda P_\lambda-K
\in\mathcal M_r(H)_+.
\end{equation}
\end{proposition}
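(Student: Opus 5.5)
The plan is to reduce everything to Theorem \ref{Red min sufficient}. I will show that $T$ is positive and diagonalizable, and that its set of eigenvalues $\Sigma:=\sigma_p(T)$ is a bounded countable well-ordered subset of $[0,\infty)$. Once this is done, writing $Q_s$ for the orthogonal projection onto $N(T-sI)$, we get $T=\sum_{s\in\Sigma}sQ_s$ with $\sum_{s\in\Sigma} Q_s=I$ strongly, and the theorem gives $T\in\ma_r(H)_+$.

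\textbf{Block decomposition.} Put $H_\lambda=R(P_\lambda)$ and $K_\lambda=K|_{H_\lambda}$. Since $KP_\lambda=P_\lambda K$, each $H_\lambda$ reduces $K$, hence reduces $T$, and
$$T=\bigoplus_{\lambda\in\Lambda}\bigl(\lambda I_{H_\lambda}-K_\lambda\bigr).$$
Each $K_\lambda$ is compact and positive with $K_\lambda\le\lambda I_{H_\lambda}$. By the spectral theorem for compact self-adjoint operators, $H_\lambda$ has an orthonormal basis of eigenvectors of $K_\lambda$. Its eigenvalues $\mu$ lie in $[0,\lambda]$, and $\mu=0$ is allowed. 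Hence each block $\lambda I-K_\lambda$ is diagonalizable with eigenvalues $\lambda-\mu\in[0,\lambda]$. Taking the union of these bases over $\lambda\in\Lambda$, $T$ is diagonalizable, $T\ge0$, and
$$\Sigma=\{\lambda-\mu:\lambda\in\Lambda,\ \mu\in\sigma_p(K_\lambda)\}.$$
This set is bounded and countable, because $H$ is separable and $\Lambda$ is countable.

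\textbf{Well-ordering (main step).} Suppose $s_n=\lambda_n-\mu_n$ is a strictly decreasing sequence in $\Sigma$, with $\mu_n\in\sigma_p(K_{\lambda_n})$. Pass to a subsequence with $\mu_n\to\mu^*$. There are two cases.
\begin{enumerate}
\item If $\mu^*>0$, then eventually $\mu_n\ge\mu^*/2$. Since $K$ is compact and the $H_\lambda$ are mutually orthogonal, only finitely many pairs $(\lambda,\mu)$ with $\mu\in\sigma_p(K_\lambda)$ and $\mu\ge\mu^*/2$ exist. So some pair repeats, which contradicts strict monotonicity of $(s_n)$.
\item If $\mu^*=0$, let $s_n\searrow s$. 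Then $\lambda_n=s_n+\mu_n\to s$. Because $\Lambda$ is well-ordered, $(\lambda_n)$ has no strictly decreasing subsequence. By Ramsey's theorem (every sequence of reals has a monotone subsequence), it therefore has a nondecreasing subsequence. That subsequence converges to $s$, so $\lambda_n\le s$ along it. This gives $s<s_n\le\lambda_n\le s$, a contradiction.
\end{enumerate}
Hence $\Sigma$ contains no strictly decreasing sequence, so it is well-ordered.

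I expect this well-ordering step to be the main obstacle. It is exactly where compactness of $K$ (the finiteness of large eigenvalues across all blocks) and the bound $K_\lambda\le\lambda I_{H_\lambda}$ are needed. Perturbed eigenvalues $\lambda-\mu$ approach $\lambda$ only from below, which is harmless for a well-order. The remaining point is to group coinciding values $\lambda-\mu$ coming from different blocks into a single projection $Q_s$, after which Theorem \ref{Red min sufficient} applies directly.
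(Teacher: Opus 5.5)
Your proof is correct. Its overall architecture matches the paper's: the block decomposition $T=\bigoplus_{\lambda}(\lambda I_{H_\lambda}-K_\lambda)$, diagonalizability, well-ordering of $\sigma_p(T)$, then Theorem~\ref{Red min sufficient}. The difference is in how you exclude a strictly decreasing sequence $s_n=\lambda_n-\mu_n$ in $\sigma_p(T)$; in your notation, $\mu_n\in\sigma_p(K_{\lambda_n})$.

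\textbf{The paper's route.} It applies the well-ordering of $\Lambda$ first, passing to a subsequence along which the block indices $(\lambda_n)$ are constant or strictly increasing. Then the $K$-eigenvalues $\mu_n=\lambda_n-s_n$ are strictly increasing, which compactness forbids. Within a single block, a compact $K_\lambda$ cannot have a strictly increasing sequence of eigenvalues. Across distinct blocks, the eigenvectors are orthonormal, so $\|Kx_n\|\to0$.

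\textbf{Your route.} You apply Bolzano--Weierstrass to the $\mu_n$ first and split on their limit $\mu^*$. The case $\mu^*>0$ is killed by a pigeonhole argument, using that $K$ has only finitely many orthonormal eigenvectors with eigenvalues above a positive threshold; this handles all blocks at once. The case $\mu^*=0$ is killed by a purely order-theoretic squeeze: a nondecreasing subsequence of $(\lambda_n)$ tends to $s$ from below, while $s<s_n\le\lambda_n$.

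\textbf{Comparison.} The paper's argument is a bit shorter. Once $(\lambda_n)$ is nondecreasing, your two cases collapse into one: $(\mu_n)$ is a strictly increasing sequence of eigenvalues of $K$ with orthonormal eigenvectors, bounded below by $\mu_2>0$. Yours has the merit of isolating exactly where compactness is used (only when $\mu^*>0$) and where the well-ordering of $\Lambda$ is used (only when $\mu^*=0$).

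One correction to your closing remark: the bound $K_\lambda\le\lambda I_{H_\lambda}$ plays no role in the well-ordering step, where you use only $\mu_n\ge0$. It is needed solely to ensure $T\ge0$, that is, $\sigma_p(T)\subseteq[0,\infty)$, so that Theorem~\ref{Red min sufficient} applies.
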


\begin{proof}
Let $H_\lambda=R(P_\lambda)$ and 
$K_\lambda=K|_{H_\lambda}$ for $\lambda\in\Lambda$.
Since $KP_\lambda=P_\lambda K$, each $H_\lambda$ reduces $K$, and hence $T=
\bigoplus_{\lambda\in\Lambda}
(\lambda I_{H_\lambda}-K_\lambda)$.
Moreover, $0\leq K_\lambda\leq \lambda I_{H_\lambda}$ for each $\lambda\in\Lambda$ further implies that $T\geq0$.

Since $K_\lambda$ is positive and compact, $H_\lambda$ admits an
orthonormal basis consisting of eigenvectors of $K_\lambda$. Hence
$\lambda I_{H_\lambda}-K_\lambda$ is diagonalizable, and therefore $T$ is diagonalizable.

\smallskip
Next, we claim that $\Gamma:=\sigma_p(T)$
is well-ordered. Since $H$ is separable, $\Gamma$ is countable. For contradiction, assume that $\Gamma$ is not well
ordered. Then $\Gamma$ contains a strictly decreasing
sequence $(\mu_j)$. Let $T_\lambda:=\lambda I_{H_\lambda}-K_\lambda$. Since
$\sigma_p(T)=
\bigcup_{\lambda\in\Lambda}\sigma_p(T_\lambda)$,
for each $j$ choose $\lambda_j\in\Lambda$ such that $\mu_j\in\sigma_p(T_{\lambda_j})$.
Now choose a unit vector $x_j\in H_{\lambda_j}$ satisfying
$$
T_{\lambda_j}x_j=\mu_jx_j, \text{ and so } K_{\lambda_j}x_j=(\lambda_j-\mu_j)x_j.
$$
Since the sequence $(\lambda_j)$ is in the well-ordered set
$\Lambda$, it has either a constant subsequence or a strictly
increasing subsequence.

Suppose first that, after passing to a subsequence, $\lambda_j=\lambda$ for $j\in\N$.
Then we can write $K_\lambda x_j=(\lambda-\mu_j)x_j$. Since $(\mu_j)$ is strictly decreasing and $K_\lambda\geq 0$,
$$
0\leq\lambda-\mu_1
<
\lambda-\mu_2
<
\cdots.
$$
Thus $K_\lambda$ has infinitely many distinct nonzero eigenvalues forming
a strictly increasing sequence, which is impossible for a compact
operator.

Hence after passing to a subsequence, we may assume
$$
\lambda_1<\lambda_2<\cdots.
$$
Since $x_j\in H_{\lambda_j}$
and the subspaces $(H_{\lambda_j})_{\lambda_j\in\Lambda}$ are mutually
orthogonal, $(x_j)$ is an orthonormal sequence. Therefore, by compactness
of $K$, it follows that
$$
\|Kx_j\|\longrightarrow0, \text{ that is, } \lambda_j-\mu_j \longrightarrow0
$$

On the other hand, for $j\geq2$,
$$
\lambda_j-\mu_j>\lambda_1-\mu_1\geq 0,
$$
which contradicts the fact $\lambda_j-\mu_j\longrightarrow 0$.

Therefore, $\Gamma$ is well-ordered. By Theorem \ref{Red min sufficient}, we conclude that
$T\in\mathcal M_r(H)_+$.
\end{proof}

\begin{corollary}\label{corollary compact perturbation}
Let $\Lambda\subseteq[0,\infty)$ be a bounded countable well-ordered set, and
let $(P_\lambda)_{\lambda\in\Lambda}$ be mutually orthogonal projections
with $\sum_{\lambda\in\Lambda}P_\lambda=I$ in the strong operator topology. Suppose $K\in\clk(H)_+$ satisfies $KP_\lambda=P_\lambda K$ for $\lambda\in\Lambda$
and $\|K\|\leq \inf\Lambda$. Then the operator
\begin{equation}
\sum_{\lambda\in\Lambda}\lambda P_\lambda-K
\in\mathcal M_r(H)_+.
\end{equation}
\end{corollary}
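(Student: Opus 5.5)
The corollary follows from Proposition \ref{compact perturbation lambda}: all of its hypotheses are given here except the pointwise bound $K|_{R(P_\lambda)}\le \lambda I_{R(P_\lambda)}$ for every $\lambda\in\Lambda$, so the plan is to derive that bound from $\|K\|\le\inf\Lambda$ and then apply the proposition.

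First, the restriction $K_\lambda:=K|_{R(P_\lambda)}$ is a well-defined operator on $R(P_\lambda)$. Since $KP_\lambda=P_\lambda K$, the subspace $H_\lambda=R(P_\lambda)$ reduces $K$, so $K_\lambda$ maps $H_\lambda$ into itself. Because $K\ge 0$, $K_\lambda$ is positive on $H_\lambda$.

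Next, bound $K_\lambda$ above. For each unit vector $x\in H_\lambda$,
$$\langle K_\lambda x,x\rangle=\langle Kx,x\rangle\le\|K\|\le\inf\Lambda\le\lambda,$$
so $0\le K_\lambda\le\lambda I_{H_\lambda}$ for every $\lambda\in\Lambda$. The infimum of $\Lambda$ is taken in $[0,\infty)$ and is finite, since $\Lambda$ is bounded below by $0$. It is in fact attained because $\Lambda$ is well-ordered, though this is not needed.

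With $K\in\clk(H)_+$, $KP_\lambda=P_\lambda K$, and $K_\lambda\le\lambda I_{H_\lambda}$ for all $\lambda\in\Lambda$, every hypothesis of Proposition \ref{compact perturbation lambda} holds. That proposition then gives $\sum_{\lambda\in\Lambda}\lambda P_\lambda-K\in\ma_r(H)_+$.

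The only point needing any care is the quadratic-form bound, and it is immediate. The argument has no real obstacle.
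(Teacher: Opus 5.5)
Your proposal is correct and is essentially the paper's own proof. The paper also observes that $0\le K|_{R(P_\lambda)}\le \|K\|\,I_{R(P_\lambda)}\le (\inf\Lambda)\,I_{R(P_\lambda)}\le \lambda\, I_{R(P_\lambda)}$ for every $\lambda\in\Lambda$, and then applies Proposition~\ref{compact perturbation lambda} directly.
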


\begin{proof}
For every $\lambda\in\Lambda$, observe that
$$
0\leq K|_{R(P_\lambda)}
\leq \|K\|I_{R(P_\lambda)}
\leq (\inf\Lambda)I_{R(P_\lambda)}
\leq \lambda I_{R(P_\lambda)}.
$$
Hence all the hypotheses of Proposition \ref{compact perturbation lambda}
are satisfied. Therefore,
$$
\sum_{\lambda\in\Lambda}\lambda P_\lambda-K
\in\mathcal M_r(H)_+.
$$
\end{proof}

\begin{theorem}\label{finite and compact perturbation suff}
Let $F\in\clf(H)_{sa}$, $K\in\mathcal K(H)_+$, and let
$(P_\lambda)_{\lambda\in\Lambda}$ be mutually orthogonal projections with $\sum_{\lambda\in\Lambda}P_\lambda=I$, $FP_\lambda=P_\lambda F$ and $KP_\lambda=P_\lambda K$ for all $\lambda\in\Lambda$,
where $\Lambda\subseteq[0,\infty)$ is a bounded countable well-ordered
set. Assume that $K|_{R(P_\lambda)}
\leq \lambda I_{R(P_\lambda)}$ for all $\lambda\in\Lambda$. If
$T:=\sum_{\lambda\in\Lambda}\lambda P_\lambda + F -K$ is positive, then $T\in\ma_r(H)_+$.
\end{theorem}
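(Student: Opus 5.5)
The plan is to combine the arguments of Proposition \ref{finite rank perturbation} and Proposition \ref{compact perturbation lambda}. Put $H_\lambda=R(P_\lambda)$, $F_\lambda=F|_{H_\lambda}$ and $K_\lambda=K|_{H_\lambda}$. Both $F$ and $K$ commute with every $P_\lambda$, so each $H_\lambda$ reduces $T$, and
$$T=\bigoplus_{\lambda\in\Lambda}T_\lambda,\qquad T_\lambda:=\lambda I_{H_\lambda}+F_\lambda-K_\lambda .$$
Let $J=\{\lambda\in\Lambda:F_\lambda\neq 0\}$. Since $F$ has finite rank and the $H_\lambda$ are mutually orthogonal, $J$ is finite. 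Each $T_\lambda$ is self-adjoint and is a compact perturbation of $\lambda I_{H_\lambda}$, because $F_\lambda-K_\lambda$ is compact. It is therefore diagonalizable, so $T$ is diagonalizable. Since $T\ge 0$ by hypothesis, Theorem \ref{Red min sufficient} reduces the problem to one claim: the countable set $\Gamma=\sigma_p(T)=\bigcup_{\lambda\in\Lambda}\sigma_p(T_\lambda)$ is well-ordered.

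To prove the claim, split $\Gamma$ as $\Gamma_1\cup\Gamma_2$, where
$$\Gamma_1=\bigcup_{\lambda\notin J}\sigma_p(\lambda I-K_\lambda),\qquad \Gamma_2=\bigcup_{\lambda\in J}\sigma_p(T_\lambda).$$
A finite union of well-ordered subsets of $\mathbb{R}$ is well-ordered: any strictly decreasing sequence in the union has infinitely many terms in one piece. So it suffices to treat $\Gamma_1$ and each $\sigma_p(T_\lambda)$ with $\lambda\in J$ separately.

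For $\Gamma_1$, the compressions $K_\lambda$ with $\lambda\in\Lambda\setminus J$ satisfy the hypotheses of Proposition \ref{compact perturbation lambda}. I would rerun that contradiction argument with the index set restricted to $\Lambda\setminus J$, which is still well-ordered. The argument runs as follows. Take a strictly decreasing sequence $\mu_j$ in $\Gamma_1$, attached to indices $\lambda_j$. After passing to a subsequence, either $\lambda_j$ is constant or $\lambda_j$ is strictly increasing. In the constant case $K_\lambda$ would have a strictly increasing sequence of eigenvalues $\lambda-\mu_j$, which is impossible for a positive compact operator. In the increasing case $\lambda_j-\mu_j\to 0$ by compactness of $K$ on an orthonormal sequence, yet $\lambda_j-\mu_j>\lambda_1-\mu_1\ge 0$, a contradiction.

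The main obstacle is a single $\lambda\in J$. Here $T_\lambda=\lambda I+(F_\lambda-K_\lambda)$ with $F_\lambda-K_\lambda$ compact self-adjoint but not necessarily $\le 0$, so the eigenvalues of $F_\lambda-K_\lambda$ can lie on both sides of $0$. Eigenvalues of $T_\lambda$ below $\lambda$ could then accumulate downward to $\lambda$, and a decreasing sequence would appear. I would show this cannot happen by a min-max comparison: $F_\lambda-K_\lambda\ge -K_\lambda$ and $F_\lambda$ has finite rank $r$, so the positive spectrum of $F_\lambda-K_\lambda$ has at most $r$ eigenvalues counted with multiplicity. The eigenvalues of $T_\lambda$ exceeding $\lambda$ are therefore finitely many. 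The eigenvalues of $T_\lambda$ that are $\le\lambda$ come from the nonpositive eigenvalues of the compact operator $F_\lambda-K_\lambda$, and these can accumulate only at $0$ from below. Hence they form a sequence increasing to $\lambda$, possibly together with $\lambda$ itself, which is well-ordered. Adding finitely many points keeps it well-ordered. Combining the pieces, $\Gamma$ is well-ordered, and Theorem \ref{Red min sufficient} gives $T\in\ma_r(H)_+$.
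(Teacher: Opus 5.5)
Your proposal is correct and follows the paper's proof essentially step for step. The paper uses the same decomposition $T=\bigoplus_{\lambda}T_\lambda$ and the same finite set $J$; it simply invokes Proposition~\ref{compact perturbation lambda} on $H_0=\bigoplus_{\lambda\notin J}H_\lambda$ where you rerun that argument, and it cites a lemma of Paulsen for the finiteness of the positive eigenvalues of $F_\lambda-K_\lambda$ where you use min-max.

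There is one slip in your min-max step. The inequality $F_\lambda-K_\lambda\ge -K_\lambda$ holds only if $F_\lambda\ge 0$, which is not assumed, and a lower bound would not limit the number of positive eigenvalues anyway. What you need is that $F_\lambda-K_\lambda$ is a rank-$r$ perturbation of $-K_\lambda\le 0$. Then every $(r+1)$-dimensional subspace meets $N(F_\lambda)$, and there the quadratic form of $F_\lambda-K_\lambda$ equals $-\langle K_\lambda x,x\rangle\le 0$, so $F_\lambda-K_\lambda$ has at most $r$ positive eigenvalues counted with multiplicity.
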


\begin{proof}
For $\lambda\in\Lambda$, write $H_\lambda:=R(P_\lambda)$ and $F_\lambda:=F|_{H_\lambda}$, $K_\lambda:=K|_{H_\lambda}$. Since $FP_\lambda=P_\lambda F$ and $KP_\lambda=P_\lambda K$, each
$H_\lambda$ reduces $F$, $K$, and hence $T$. Therefore, we can write
$$
T=\bigoplus_{\lambda\in\Lambda}T_\lambda,
\text{ where }
T_\lambda:=\lambda I_{H_\lambda}+F_\lambda-K_\lambda.
$$
Also it is clear that $0\leq K_\lambda\leq \lambda I_{H_\lambda}$.
Since $F$ has finite rank, there exists a finite set
$J\subseteq\Lambda$ such that $F_\lambda=0$ for $\lambda\in\Lambda\setminus J$. Let $\Lambda_0:=\Lambda\setminus J$ and $H_0:=\bigoplus_{\lambda\in\Lambda_0}H_\lambda$. Then we can write $$T_0:=T|_{H_0}
=\bigoplus_{\lambda\in\Lambda_0}
\left(\lambda I_{H_\lambda}-K_\lambda\right).$$
Clearly, the set $\Lambda_0$ is a bounded countable well-ordered subset of
$[0,\infty)$, and the projections $(P_\lambda)_{\lambda\in\Lambda_0}$
are mutually orthogonal with
$\sum_{\lambda\in\Lambda_0}P_\lambda=I_{H_0}$. Since
$K|_{H_0}\in\mathcal K(H_0)_+$ with
$K|_{H_0}P_\lambda=P_\lambda K|_{H_0}$ and $K|_{H_\lambda}\leq\lambda I_{H_\lambda}$ for $\lambda\in\Lambda_0$, by Proposition \ref{compact perturbation lambda}, we get $T_0\in\ma_r(H_0)_+$. In particular, by Theorem \ref{+M_r class equiv}, $T_0$ is diagonalizable and
$\sigma_p(T_0)$ is a countable well-ordered subset of $[0,\|T_0\|]$.

\smallskip
Now consider $\lambda\in J$. In this case we have
$$
T_\lambda
=\lambda I_{H_\lambda}+ F_\lambda-K_\lambda,
$$
where $F_\lambda-K_\lambda$ is compact and self-adjoint. Hence
$T_\lambda$ is diagonalizable. Since $F_\lambda$ is self-adjoint finite rank operator and $K_\lambda$ is positive compact operator, by \cite[Lemma 4.8]{Paulsen}, $C_\lambda:=F_\lambda-K_\lambda$ has at most finitely
many positive eigenvalues. If its negative eigenvalues are infinite,
they can be arranged as 
$$
\mu_1<\mu_2<\cdots<0,
\qquad \mu_j\longrightarrow0.
$$
Since each $T_\lambda$ is positive, we have $\lambda+\mu_j\geq0$ and $\lambda + \mu_j \nearrow \lambda$.

\begin{figure}[h]
\centering
\begin{tikzpicture}[xscale=1.25,yscale=1]

% Number line
\draw[->] (-5.0,0) -- (4.2,0);

% Negative eigenvalues accumulating at 0
\foreach \x in {
    -4.6,-4.1,-3.65,-3.25,-2.9,-2.58,-2.30,-2.05,
    -1.82,-1.62,-1.44,-1.28,-1.13,-1.00,-0.90,-0.82,
    -0.75,-0.69,-0.63,-0.58,-0.53,-0.48,-0.44,-0.40,
    -0.36,-0.33,-0.30,-0.27,-0.24,-0.22,-0.20,-0.18,
    -0.16,-0.14,-0.12,-0.10,-0.08,-0.06,-0.04,-0.02
}
{
    \draw (\x,0) -- (\x,0.42);
}

% Labels for selected eigenvalues
\node[below] at (-4.6,0) {$\mu_1$};
\node[below] at (-4.1,0) {$\mu_2$};
\node[below] at (-3.65,0) {$\mu_3$};
\node[below] at (-3.25,0) {$\mu_4$};
\node[below] at (-2.9,0) {$\mu_5$};
\node[below] at (-2.3,0) {$\cdots$};

% Zero
\draw[very thick] (0,0) -- (0,0.65);
\node[below] at (0,0) {$0$};

% Negative eigenvalue arrow
\draw[->] (-2.0,0.85) -- (-0.25,0.85);
\node[above] at (-1.15,0.85)
{$\mu_j\nearrow0$};

% Finitely many positive eigenvalues
\foreach \x in {0.65,1.25,2.0,2.85,3.55}
{
    \draw (\x,0) -- (\x,0.42);
}

% Positive eigenvalues label
\node[above] at (1.9,0.55)
{\scriptsize finitely many};

\end{tikzpicture}
\caption{The point spectrum of $C_\lambda=F_\lambda-K_\lambda$.}
\label{fig:spectrum-C}
\end{figure}

\begin{figure}[h]
\centering
\begin{tikzpicture}[xscale=1.25,yscale=1]

% Number line
\draw[->] (-4.8,0) -- (5.2,0);

% Eigenvalues below lambda
\foreach \x in {-4.5,-4.05,-3.45,-3.05,-2.7,-2.38,-2.10,
                -1.84,-1.60,-1.39,-1.20,-1.03,-0.88,-0.75,
                -0.63,-0.53,-0.44,-0.36,-0.29,-0.23,-0.18,
                -0.14,-0.11,-0.08,-0.055,-0.035,-0.025}
{
    \draw (\x,0) -- (\x,0.42);
}

% lambda
\draw[very thick] (0,0) -- (0,0.7);

% Eigenvalues above lambda
\foreach \x in {0.75,1.35,2.05,2.85,3.65}
{
    \draw (\x,0) -- (\x,0.42);
}

% Labels
\node[below] at (0,0) {$\lambda$};
\node[below] at (-4.5,0) {\fontsize{7pt}{8pt}\selectfont $0$};
\node[below] at (-4.05,0) {\fontsize{4.6pt}{5pt}\selectfont $\lambda+\mu_1$};
\node[below] at (-3.45,0) {\fontsize{4.6pt}{5pt}\selectfont $\lambda+\mu_2$};
\node[below] at (-3.05,0) {\fontsize{4.5pt}{5pt}\selectfont $\cdots$};
% Accumulation arrow
\draw[->] (-2.1,0.85) -- (-0.18,0.85);
\node[above] at (-1.15,0.85)
{$\lambda+\mu_j\nearrow\lambda$};

% Finitely many points above lambda
\node[above] at (2.2,0.55)
{\scriptsize finitely many};

\end{tikzpicture}
\caption{The point spectrum of
$T_\lambda=\lambda I_{H_\lambda}+C_\lambda$.}
\label{fig:spectrum-Tlambda}
\end{figure}

Consequently, for $\lambda\in J$, $\sigma_p(T_\lambda)$ is well-ordered.
Since $J$ is finite, $\Gamma_0:=
\bigcup_{\lambda\in J}\sigma_p(T_\lambda)$
is well-ordered. Hence $\sigma_p(T)=\Lambda_0\cup\Gamma_0$ is the countable well-ordered subset of $[0,\|T\|]$.

Since each $T_\lambda$ is diagonalizable and $H=\bigoplus_{\lambda\in\Lambda}H_\lambda$,
therefore the union of orthonormal eigenbases of the operators
$T_\lambda$ is an orthonormal basis for $H$ consisting of eigenvectors of $T$. Thus $T$ is diagonalizable. Since $T$ is positive and $\sigma_p(T)$ is a countable well-ordered subset of $[0,\|T\|]$, by Theorem \ref{+M_r class equiv}, it follows that $T\in\ma_r(H)_+$. 
\end{proof}

\begin{corollary}\label{Finite direct sum of AM Op}
    Let $A_j\in \am(H_j)$ for $j=1,\cdots,n$. Then $\bigoplus_{j=1}^n A_j\in \ma_r\left(\bigoplus_{j=1}^n H_j\right)$.
\end{corollary}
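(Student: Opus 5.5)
The plan is to reduce to the positive case and then apply Theorem \ref{finite and compact perturbation suff}. Put $A:=\bigoplus_{j=1}^n A_j$ on $H:=\bigoplus_{j=1}^n H_j$. Since $A^*A=\bigoplus_{j=1}^n A_j^*A_j$, uniqueness of the positive square root gives $|A|=\bigoplus_{j=1}^n |A_j|$. By Proposition \ref{equivalence}(ii), it suffices to show $|A|\in\ma_r(H)_+$.

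For each $A_j$ I will use two known facts from \cite{AM Cha,NBGRAM}, which I am assuming and quoting rather than proving. First, $A_j\in\am(H_j)$ if and only if $|A_j|\in\am(H_j)$. Second, a positive $\am$-operator has the form
$$|A_j|=\beta_j I_{H_j}+F_j-K_j,$$
where $\beta_j\ge 0$, $F_j\in\clf(H_j)$ is positive (so self-adjoint), $K_j\in\clk(H_j)_+$, and $\|K_j\|\le\beta_j$. I will fix these data for $j=1,\dots,n$.

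Next I set up the data for Theorem \ref{finite and compact perturbation suff}.
\begin{enumerate}
\item Let $\Lambda:=\{\beta_1,\dots,\beta_n\}$. It is finite, hence a bounded countable well-ordered subset of $[0,\infty)$.
\item For $\lambda\in\Lambda$, let $P_\lambda$ be the orthogonal projection onto $\bigoplus_{\{j:\beta_j=\lambda\}}H_j$. These projections are mutually orthogonal and $\sum_{\lambda\in\Lambda}P_\lambda=I$.
\item Let $F:=\bigoplus_j F_j$ and $K:=\bigoplus_j K_j$. Then $F$ is finite rank and self-adjoint (a finite sum of finite rank operators), and $K$ is compact and positive (a finite direct sum of compact positive operators).
\end{enumerate}
Since $F$, $K$ and every $P_\lambda$ are block diagonal with respect to $H=\bigoplus_j H_j$, and each $P_\lambda$ is a sum of the block projections, $F$ and $K$ commute with every $P_\lambda$. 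On $R(P_\lambda)$ we have $K|_{R(P_\lambda)}=\bigoplus_{\{j:\beta_j=\lambda\}}K_j\le\lambda I$, because $\|K_j\|\le\beta_j=\lambda$.

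With these choices, $|A|=\sum_{\lambda\in\Lambda}\lambda P_\lambda+F-K$, and $|A|$ is positive. Theorem \ref{finite and compact perturbation suff} then yields $|A|\in\ma_r(H)_+$, and so $A\in\ma_r(H)$.

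The main obstacle is the structure theorem for positive $\am$-operators. I need the exact form with the bound $\|K_j\|\le\beta_j$, since that is precisely what gives the hypothesis $K|_{R(P_\lambda)}\le\lambda I_{R(P_\lambda)}$. If the cited form only gives $K_j\le\beta_j I$, or states the decomposition differently, I would first rewrite it into this form. If that fails, I would instead argue directly that each $|A_j|$ is diagonalizable with well-ordered point spectrum: its spectrum consists of finitely many points above $\beta_j$ plus an increasing sequence converging to $\beta_j$. A finite union of such sets is well-ordered, so Theorem \ref{+M_r class equiv} would apply to $|A|$.
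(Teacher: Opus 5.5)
Your proposal is correct and follows essentially the paper's proof. The paper also writes each $A_j^*A_j=\delta_jI_{H_j}-K_j+F_j$ via the structure theorem for positive $\am$-operators, assembles the block-diagonal data, and invokes Theorem \ref{finite and compact perturbation suff} together with Proposition \ref{equivalence}; the only differences are that you work with $|A_j|$ rather than $A_j^*A_j$, and you merge the summands with equal $\beta_j$ so the projections are genuinely indexed by the set $\Lambda$, a point the paper's write-up glosses over.
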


\begin{proof}
    Since $A_j\in \am(H_j)$, by \cite[Theorem 5.14]{AM Cha}, $A^*_jA_j\in \am(H_j)_+$ for $j=1,\cdots,n$. Thus by \cite[Theorem 5.7]{AM Cha}, $A^*_jA_j=\delta_j I_{H_j}-K_j+F_j$, where $\delta_j\geq 0$, $F_j\in \clf(H_j)_+$ and $K_j\in \clk(H_j)_+$ with $\|K_j\|\leq \delta_j$  for $j=1,\cdots,n$. Then
    \begin{align*}
        \bigoplus_{j=1}^n A_j^* A_j&=\bigoplus_{j=1}^n \delta_j I_{H_j} - \bigoplus_{j=1}^n K_j + \bigoplus_{j=1}^n F_j\\
        &=\sum_{j=1}^n \delta_j P_j - \widetilde{K} + \widetilde{F},
    \end{align*}
    where $P_j$ is the orthogonal projection of $H:=\bigoplus_{j=1}^n H_j$ onto $H_j$, $\widetilde{K}:=\bigoplus_{j=1}^n K_j\in\clk(H)_+$ and $\widetilde{F}:=\bigoplus_{j=1}^n F_j\in \clf(H)_+$. Also it is easy to observe that $P_j\widetilde{F}=\widetilde{F}P_j$, $P_j\widetilde{K}=\widetilde{K}P_j$ and $0\leq K_j\leq \delta_j I_{H_j}$ for $j=1,\cdots,n$. Therefore, by Theorem \ref{finite and compact perturbation suff} and Proposition \ref{equivalence}, the result follows.
\end{proof}

In general, the above result does not extend to infinite direct sums of
$\am$-operators.
\begin{remark}
 Let $H=\ell^2(\mathbb N)=\bigoplus_{j=1}^{\infty} H_j$, where $H_j=\bigvee\{e_j\}$.
For each $j\in\mathbb N$, define $T_j\in\clb(H_j)$ by
$$
T_j e_j=\left(1+\frac{1}{j}\right)e_j.
$$
Since $H_j$ is one-dimensional, each $T_j$ is $\am$. Consider the operator $T=\bigoplus_{j=1}^{\infty}T_j$. Then $\sigma_p(T)=\{1+\frac{1}{j} : j\in\N\}$, which does not have the least element. Thus $\sigma_p(T)$ is not well-ordered, and so $T\notin\ma_r\left(\bigoplus_{j=1}^\infty H_j\right)$
\end{remark}

\begin{lemma}\label{finite spectrum}
Every self-adjoint operator $T\in\clb(H)$ with finite spectrum belongs to $\ma_r(H)$.
\end{lemma}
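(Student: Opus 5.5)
The plan is to reduce to the positive case through Remark \ref{self-adj iff}: for a self-adjoint $T$, $T\in\ma_r(H)$ if and only if $|T|\in\ma_r(H)_+$. It therefore suffices to show that $|T|$ admits a representation as in Theorem \ref{Red min sufficient}.

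\textbf{Step 1: $T$ is diagonal with finitely many eigenvalues.} Write $\sigma(T)=\{\lambda_1,\dots,\lambda_k\}\subseteq\mathbb{R}$. Each point of a finite spectrum is isolated. For a self-adjoint operator, the spectral projection $P_j:=\chi_{\{\lambda_j\}}(T)$ given by the continuous functional calculus is nonzero, and it is exactly the projection onto $N(T-\lambda_j I)$. Concretely, $\chi_{\{\lambda_j\}}$ is continuous on $\sigma(T)$, and $(t-\lambda_j)\chi_{\{\lambda_j\}}(t)=0$ on $\sigma(T)$. Moreover $\sum_j \chi_{\{\lambda_j\}}=1$ and $t=\sum_j\lambda_j\chi_{\{\lambda_j\}}(t)$ on $\sigma(T)$. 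Applying the functional calculus gives mutually orthogonal projections $P_1,\dots,P_k$ with
$$\sum_{j=1}^k P_j=I \quad\text{and}\quad T=\sum_{j=1}^k\lambda_jP_j.$$

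\textbf{Step 2: compute $|T|$.} By the same functional calculus, or directly since the $P_j$ are orthogonal and sum to $I$, we get $|T|=\sum_{j=1}^k|\lambda_j|P_j$. Group the indices by the values of $|\lambda_j|$. Let $\Lambda:=\{|\lambda_j|:1\le j\le k\}$, and for $\mu\in\Lambda$ set $Q_\mu:=\sum_{|\lambda_j|=\mu}P_j$. Then the $Q_\mu$ are nonzero, mutually orthogonal, and satisfy $\sum_{\mu}Q_\mu=I$ and $|T|=\sum_{\mu\in\Lambda}\mu Q_\mu$. The set $\Lambda$ is a finite subset of $[0,\infty)$, so it is bounded, countable and well-ordered, since every nonempty finite set of reals has a least element.

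\textbf{Step 3: conclude.} Theorem \ref{Red min sufficient} now gives $|T|\in\ma_r(H)_+$. Remark \ref{self-adj iff}, via Proposition \ref{equivalence}(ii), then yields $T\in\ma_r(H)$.

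The only point needing care is Step 1, namely that a finite spectrum forces the eigenspace decomposition, so that $T$ is diagonalizable. This is the standard fact that for normal operators isolated spectral points are eigenvalues whose spectral projections are the eigenprojections. The characteristic functions being continuous on the finite set $\sigma(T)$ makes this immediate, with no need for the Borel functional calculus.
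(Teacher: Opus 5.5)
Your proof is correct, but it is organized differently from the paper's.

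The paper never passes through $|T|$. After writing $T=\sum_{j=1}^m\lambda_jP_j$ via the spectral theorem, it takes an arbitrary nonzero $M\in\red_T$ and notes that $M$ reduces each spectral projection $P_j$. It then picks $j_0$ minimizing $|\lambda_j|$ over those $j$ with $P_jM\neq\{0\}$. From $\|Tx\|^2=\sum_j|\lambda_j|^2\|P_jx\|^2$ it checks directly that a unit vector in $P_{j_0}M$ attains $m(T_M)=|\lambda_{j_0}|$.

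You instead regroup the $P_j$ by the values $|\lambda_j|$ to write $|T|=\sum_{\mu\in\Lambda}\mu Q_\mu$ with $\Lambda$ finite. You then apply Theorem \ref{Red min sufficient} to $|T|$ and transfer back to $T$ with Proposition \ref{equivalence}(ii). There is no circularity: that implication holds for arbitrary $T$, and the cited results are proved before this lemma.

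The underlying mechanism is the same in both: a reducing subspace commutes with the spectral projections, and the smallest absolute eigenvalue that $M$ sees is attained. Your version makes the lemma visibly a special case of the positive characterization, since finite sets are trivially well-ordered. The paper's version is self-contained and handles the signs of the eigenvalues by minimizing $|\lambda_j|$ directly rather than by regrouping.

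Your Step 1 also justifies the finite spectral decomposition, which the paper simply quotes from the spectral theorem. You do this through the continuous functional calculus, using that the characteristic functions $\chi_{\{\lambda_j\}}$ are continuous on the finite set $\sigma(T)$.
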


\begin{proof}
 Let $\sigma(T)=\{\lambda_1,\ldots,\lambda_m\}$. Since $T$ is self-adjoint, by the
spectral theorem, we can write $T=\sum_{j=1}^m\lambda_jP_j$,
where $P_1,\ldots,P_m$ are mutually orthogonal projections with
$\sum_{j=1}^mP_j=I$. 

Let $M\neq\{0\}$ be a reducing subspace for $T$. Since each $P_j$ is a spectral projection of $T$, $M$ reduces $P_j$ for every $j$. Hence
$P_jM\subseteq M$. Since $M\neq\{0\}$, there exists $j$ such that
$P_jM\neq\{0\}$. Let $j_0$ be such that
$|\lambda_{j_0}|=\min\{|\lambda_j|:P_jM\neq\{0\}\}$. Choose a unit vector $x_0\in P_{j_0}M$. Then $\|Tx_0\|=|\lambda_{j_0}|$. Moreover, for every $x\in S_M$,
$$
\|Tx\|^2
=\sum_{j=1}^m|\lambda_j|^2\|P_jx\|^2
\geq |\lambda_{j_0}|^2\sum_{j:P_jM\neq\{0\}}\|P_jx\|^2
=|\lambda_{j_0}|^2.
$$
Thus $m(T_M)=|\lambda_{j_0}|=\|Tx_0\|$. Therefore, $T_M$ is minimum attaining. Since $M$ is arbitrary,
$T\in\ma_r(H)$.
\end{proof}

The following is a simple application of the spectral theorem. For the sake of completeness we include the proof here.
\begin{lemma}\label{self approx}
Let $T \in\clb(H)$ be a self-adjoint operator. Then $T$ can be approximated in operator norm by self-adjoint operators with finite spectrum.
\end{lemma}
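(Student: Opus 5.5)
The plan is to use the Borel functional calculus for $T$ with a step function that uniformly approximates the identity on $\sigma(T)$. Since $T$ is self-adjoint, $\sigma(T)\subseteq[-\|T\|,\|T\|]=:[a,b]$ is a compact subset of $\mathbb{R}$. Let $E$ be the spectral measure of $T$, so that $T=\int_{\sigma(T)}t\,dE(t)$.

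Fix $\varepsilon>0$ and choose a partition $a=t_0<t_1<\cdots<t_n=b$ with mesh less than $\varepsilon$. Define the Borel step function
$$f_\varepsilon:=\sum_{k=1}^{n} t_k\,\chi_{\Delta_k},$$
where $\Delta_1=[t_0,t_1]$ and $\Delta_k=(t_{k-1},t_k]$ for $k\ge 2$. Then $f_\varepsilon$ is real-valued, takes finitely many values, and satisfies $\sup_{t\in[a,b]}|f_\varepsilon(t)-t|<\varepsilon$.

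Set $T_\varepsilon:=f_\varepsilon(T)=\sum_{k=1}^n t_kE(\Delta_k)$. The $E(\Delta_k)$ are mutually orthogonal projections summing to $I$. Since $f_\varepsilon$ is real, $T_\varepsilon$ is self-adjoint, and $\sigma(T_\varepsilon)\subseteq\{t_k:E(\Delta_k)\neq0\}$ is finite. This can be checked directly: for $\mu$ outside this finite set, $T_\varepsilon-\mu I=\bigoplus_k(t_k-\mu)I_{R(E(\Delta_k))}$ is invertible.

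The estimate follows from the norm bound of the Borel functional calculus:
$$\|T-T_\varepsilon\|=\Bigl\|\int (t-f_\varepsilon(t))\,dE(t)\Bigr\|\le\sup_{t\in\sigma(T)}|t-f_\varepsilon(t)|<\varepsilon.$$

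The only point needing care is this bound. To avoid invoking the Borel calculus norm estimate abstractly, one can verify it by hand. Each $R(E(\Delta_k))$ reduces $T$, and the spectrum of the restriction satisfies $\sigma\bigl(T|_{R(E(\Delta_k))}\bigr)\subseteq\overline{\Delta_k}$. Hence
$$\bigl\|(T-t_kI)|_{R(E(\Delta_k))}\bigr\|\le\varepsilon,$$
because the restriction is self-adjoint with spectrum in $[t_{k-1}-t_k,0]$. Taking the orthogonal direct sum over $k$ then gives $\|T-T_\varepsilon\|\le\varepsilon$. Letting $\varepsilon\to0$ proves the lemma.
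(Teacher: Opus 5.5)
Your proof is correct and follows essentially the same route as the paper. Both arguments partition $[-\|T\|,\|T\|]$ finely, form $\sum_k c_k E(\Delta_k)$ from the spectral measure, and bound the error by $\sup_{t\in\sigma(T)}|t-f_\varepsilon(t)|$. The only difference is cosmetic: the paper takes the values $\lambda_k\in I_k\cap\sigma(T)$, which also gives $\sigma(T_\varepsilon)\subseteq\sigma(T)$, whereas you use the right endpoints $t_k$, and that works equally well for the lemma.
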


\begin{proof}
Since $T$ is self-adjoint, its spectrum $\sigma(T)$ is a compact subset of $[-\|T\|, \|T\|]$. By the Spectral Theorem \cite[Theorem 20.2, Page 120]{KeheZhu}, there exists a unique spectral measure $E$ on the Borel $\sigma$-algebra of $\sigma(T)$ such that
$$
T = \int_{\sigma(T)} \lambda \, dE(\lambda).
$$
For a given $\varepsilon > 0$, cover the interval $[-\|T\|, \|T\|]$ with a finite collection of disjoint subintervals $I_1, I_2, \dots, I_n$ such that
$$
\text{diam}(I_k) < \frac{\varepsilon}{2} \quad \text{for all } k \in \{1, \dots, n\}.
$$
Let $A_k = I_k \cap \sigma(T)$ for $k = 1, \dots, n$, and define the index set of non-empty intersections
$$
J = \{ k \in \{1, \dots, n\} \mid A_k \neq \emptyset\}.
$$
Since $\{A_k\}_{k \in J}$ is a pairwise disjoint partition of $\sigma(T) = \bigcup_{k \in J} A_k$, it follows that $\{E(A_k)\}_{k \in J}$ is a family of nonzero, mutually orthogonal projections summing to the identity operator $I$. 

Now, for each $k \in J$, choose a point $\lambda_k \in A_k$, and define the operator 
$$
T_\varepsilon := \sum_{k \in J} \lambda_k E(A_k).
$$
Clearly, $T_\varepsilon$ is self-adjoint with finite spectrum $\sigma(T_\varepsilon) = \{\lambda_k \mid k \in J\} \subseteq \sigma(T)$. Therefore,
\begin{align*}
    \|T - T_\varepsilon\| &= \left\| \int_{\sigma(T)} \left(\lambda - \sum_{k \in J} \lambda_k \chi_{A_k}(\lambda)\right) dE(\lambda) \right\| \\
    &= \sup_{\lambda \in \sigma(T)} \left| \lambda - \sum_{k \in J} \lambda_k \chi_{A_k}(\lambda) \right|.
\end{align*}
Now for each $\lambda \in \sigma(T)$, there exists a unique $k \in J$ such that $\lambda \in A_k$. Consequently, we obtain
$$
\left| \lambda - \sum_{j \in J} \lambda_j \chi_{A_j}(\lambda) \right| = |\lambda - \lambda_k| \le \mathrm{diam}(A_k) < \frac{\varepsilon}{2}.
$$
Thus $\|T - T_\varepsilon\| \le \frac{\varepsilon}{2} < \varepsilon$. This completes the proof.
\end{proof}

We now prove the density theorem.
\begin{theorem}\label{densitythm}
The class $\ma_r(H)$ is norm dense in $\clb(H)$. 
\end{theorem}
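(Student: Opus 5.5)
The plan is to reduce to the positive case through the polar decomposition and then apply Lemma \ref{finite spectrum} together with Proposition \ref{equivalence}(ii). Fix $T\in\clb(H)$ and $\varepsilon>0$, and write $T=V|T|$, where $V$ is the partial isometry with $V^*V=P_{N(T)^\perp}$. Since $N(T)=N(|T|)$, we have $V^*V=I-E(\{0\})$, where $E$ is the spectral measure of $|T|$. The idea is to approximate $|T|$ by a positive operator $S$ with finite spectrum that commutes with $E(\{0\})$ and vanishes on $R(E(\{0\}))$. Then $T$ is approximated by $VS$.

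\textbf{Step 1 (approximating $|T|$).} I would rerun the construction of Lemma \ref{self approx} for $|T|$, with one change to the partition. The point $\{0\}$ is taken as its own piece $A_0:=\{0\}\cap\sigma(|T|)$, and we assign it the value $0$. The set $\sigma(|T|)\setminus\{0\}$ is partitioned into finitely many disjoint Borel sets $A_1,\dots,A_n$, each of diameter $<\varepsilon$. In each nonempty $A_k$ we choose a point $\lambda_k\in A_k$, so that $\lambda_k\geq 0$. Then set
$$S:=\sum_{k\ge1}\lambda_kE(A_k)\quad(+\,0\cdot E(A_0)).$$
This $S$ is positive with finite spectrum, and the spectral estimate of Lemma \ref{self approx} gives $\||T|-S\|\le\varepsilon$. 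Moreover $S$ commutes with $E(\{0\})$ and satisfies $SE(\{0\})=0$, because the $E(A_k)$ with $k\ge 1$ are orthogonal to $E(\{0\})$.

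\textbf{Step 2 (computing $|VS|$).} We have $(VS)^*(VS)=SV^*VS=S(I-E(\{0\}))S=S^2$. Hence $|VS|=S$.

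\textbf{Step 3 (membership and estimate).} By Lemma \ref{finite spectrum}, $S\in\ma_r(H)_+$. So $|VS|\in\ma_r(H)_+$, and Proposition \ref{equivalence}(ii) gives $VS\in\ma_r(H)$. Finally,
$$\|T-VS\|=\|V(|T|-S)\|\le\||T|-S\|\le\varepsilon.$$
Since $\varepsilon>0$ was arbitrary, $\ma_r(H)$ is norm dense in $\clb(H)$.

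The only delicate point is Step 1. A naive finite-spectrum approximant of $|T|$ need not commute with $V^*V$. In that case $|VS|$ is not $S$, and it is unclear whether $|VS|$ lies in $\ma_r(H)$. Isolating $\{0\}$ as a separate spectral piece with value $0$ is exactly what forces $|VS|=S$, and this is the step I would check most carefully.
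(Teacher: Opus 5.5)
Your proof is correct and follows the paper's argument. You use the polar decomposition, build a positive finite-spectrum approximant of $|T|$ that vanishes on $N(T)$ so that $|VS|$ has finite spectrum, and then apply Lemma~\ref{finite spectrum}, Proposition~\ref{equivalence}(ii) and $\|V\|\le 1$. The difference is cosmetic: the paper gets vanishing on $N(T)$ by applying Lemma~\ref{self approx} to $|T|_{N(T)^\perp}$, extending by $0$ and then passing to $|A|$, whereas you isolate the atom $\{0\}$ in the spectral partition, which gives $|VS|=S$ directly.
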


\begin{proof}
    Let $T=V|T|$ be the polar decomposition of $T$. Then $V^*V$ is the orthogonal projection onto $N(T)^\perp$. Consider the restriction $|T|_{N(T)^\perp} : N(T)^\perp \longrightarrow N(T)^\perp$, which is a positive operator. Then by Lemma \ref{self approx}, for every $\varepsilon>0$, there exists a self-adjoint operator $A$ on $N(T)^\perp$ with finite spectrum such that $$\||T|_{N(T)^\perp}-A\|<\varepsilon.$$
    With respect to the decomposition $H=N(T)\oplus N(T)^\perp$, write $V^*V=\begin{bmatrix}
        0 & 0\\
        0 & I_{N(T)^\perp}
    \end{bmatrix}$, and consider the operator $B:=\begin{bmatrix}
        0 & 0\\
        0 & A
    \end{bmatrix}$. Set $S:=VB$. Then $S^*S=B^*V^*VB=\begin{bmatrix}
        0 & 0 \\
        0 & A^2
    \end{bmatrix}$, and hence $|S|=\begin{bmatrix}
        0 & 0 \\
        0 & |A|
    \end{bmatrix}$. Since $\sigma(|A|)=\{|\lambda|:\lambda\in\sigma(A)\}$, the operator $|A|$ has finite spectrum. Consequently, $|S|$ is a positive operator with finite spectrum. By Lemma \ref{finite spectrum}, it follows that $|S|\in\ma_r(H)_+$. Hence Proposition \ref{equivalence}(ii)
    gives $S\in\ma_r(H)$. 
    
    Therefore,
    $$\|T-S\|=\|V(|T|-B)\|\leq \|V\|\||T|-B\|=\||T|_{N(T)^\perp}-A\|<\varepsilon.$$
    Thus $\ma_r(H)$ is dense in $\clb(H)$.
 \end{proof}
Next, we strengthen the above density theorem. In fact, we prove that the operator which approximates the given bounded operator has a  invariant half-space. Before proving this, we recall the following definition of half-space.
\begin{definition}\label{halfspace}
    A closed subspace $M$ of a Banach space $X$ is said to be a half-space if both $M$ and the quotient space $X/M$ are infinite dimensional. 
\end{definition}
\begin{corollary}\label{density-halfspace}
The set of operators in $\ma_r(H)$ having a nontrivial invariant
half-space is norm dense in $\mathcal B(H)$.
\end{corollary}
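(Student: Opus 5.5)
The plan is to rerun the construction in the proof of Theorem \ref{densitythm} and then choose, case by case, a half-space that is automatically invariant for the approximant. The candidates are $N(S)$, $\overline{R(S)}$, or a spectral subspace of $|S|$. Fix $T\in\clb(H)$ and $\varepsilon>0$. As in Theorem \ref{densitythm}, write $T=V|T|$ and take a self-adjoint $A$ on $N(T)^\perp$ with finite spectrum and $\||T|_{N(T)^\perp}-A\|<\varepsilon/2$. We may replace $A$ by $|A|$, since $\bigl||\lambda|-|\mu|\bigr|\le|\lambda-\mu|$ on the finite spectrum and $|T|\ge0$. Then put $B=0\oplus A$ and $S=VB$. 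By Lemma \ref{finite spectrum} and Proposition \ref{equivalence}(ii), every operator $S=VB$ of this type lies in $\ma_r(H)$. Whatever further modification we make, we will keep $S$ of the form $VB$ with $B\ge0$ of finite spectrum, so that membership in $\ma_r(H)$ is preserved.

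\emph{Case 1: $N(S)$ is a half-space.} Then $N(S)$ itself is invariant, since $S(N(S))=\{0\}$, and we are done. The same applies to $\overline{R(S)}$, which always satisfies $S(\overline{R(S)})\subseteq\overline{R(S)}$. So it suffices that either $N(S)$ or $\overline{R(S)}$ is infinite dimensional and infinite codimensional. In particular, this settles the cases where $N(T)$ is a half-space, or where $V$ has a co-infinite range of infinite dimension. In the latter case, $R(S)\subseteq R(V)$ gives infinite codimension, and infinite dimension of $\overline{R(S)}$ follows once $A$ has an eigenvalue of infinite multiplicity that is nonzero.

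\emph{Case 2: the remaining, essentially Fredholm-like situation, where $N(S)$ is finite dimensional and $\overline{R(S)}$ has finite codimension.} Here I would exploit the fact that $A=\sum_k\lambda_kE_k$ has finitely many spectral projections on the infinite-dimensional space $N(T)^\perp$. So some $E_{k_0}$ has infinite rank, and we can split $R(E_{k_0})=L_1\oplus L_2$ into two infinite-dimensional pieces. The freedom we have is in the finite-spectrum approximant. Changing $A$ inside a single eigenspace costs nothing in the estimate, provided the eigenvalues stay within $\varepsilon/2$ of $\sigma(|T|)$.

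The idea I would try first is to make the approximant a direct sum along a half-space. Choose a half-space $M$ for which the off-diagonal corners of $T$ relative to $H=M\oplus M^\perp$ are small, by adapting $M$ to the spectral decomposition of $|T|$ and to the action of $V$. Then replace $T$ by its block-diagonal part and approximate each diagonal block separately by the construction above. The resulting $S_1\oplus S_2$ belongs to $\ma_r(H)$, because by Corollary \ref{Finite direct sum of AM Op}-type reasoning and Proposition \ref{equivalence} we have $|S_1\oplus S_2|=|S_1|\oplus|S_2|$, which has finite spectrum. Moreover, $M$ is invariant, and even reducing.

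The main obstacle is exactly this last step: producing a half-space that is almost reducing for an arbitrary $T$. That is not true in general; a unilateral-shift-like $T$ has no almost reducing half-space in this sense. So I expect to need to settle for invariance rather than reduction. That means choosing $M$ so that only the lower-left corner $P_{M^\perp}TP_M$ is small, and then deleting that corner. For the modified operator I would have to check again that $|S|$ still has finite spectrum, which is now the delicate point.
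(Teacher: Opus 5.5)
Your Case 1 is essentially fine: any half-space contained in $N(S)$, or containing $\overline{R(S)}$, is invariant for $S$. Case 2, however, is not proved, and it is the generic situation. It is forced, for example, whenever $T$ is invertible and $\varepsilon<m(T)$, since then $S$ is invertible. The proposal stops at a plan whose two essential steps are open, as your last paragraph concedes.

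\textbf{No half-space is produced.} Your reason for abandoning the almost reducing version is in fact mistaken. By Voiculescu's theorem, $T$ is a norm limit of operators $U_n(T\oplus\rho(T))U_n^*$, where $\rho$ is a unital representation of $C^*(T)$ on an infinite-dimensional separable space annihilating $C^*(T)\cap\clk(H)$. Each $U_n(H\oplus\{0\})$ is a reducing half-space for these operators. So almost reducing half-spaces exist for every $T$, the unilateral shift included. With this (deep) input your block-diagonal plan would go through, because $|S_1\oplus S_2|=|S_1|\oplus|S_2|$ has finite spectrum.

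\textbf{Deleting a small corner loses control of the class.} If $M$ is only almost invariant, subtracting a corner that is merely small in norm takes you out of the class you control. For $S'=S-P_{M^\perp}SP_M$, the difference $S'^*S'-|S|^2$ is small but in general of infinite rank. Hence $\sigma(|S'|)$ need not be finite, and nothing gives $S'\in\ma_r(H)$. Deleting the corner of $T$ before approximating does not help either, because the construction $S=VB$ does not in general preserve a non-reducing invariant subspace.

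\textbf{How the paper closes the gap.} It approximates first and insists that the correction be of finite rank. Take $S$ as in the proof of Theorem \ref{densitythm}, so that $\sigma(|S|)$ is finite. Then \cite[Theorem 4.3]{Tcaciuc} gives $F\in\clf(H)$ with $\|F\|<\varepsilon/2$ such that $S+F$ has an invariant half-space. Now $(S+F)^*(S+F)=|S|^2+\widetilde F$ with $\widetilde F\in\clf(H)_{sa}$. A finite-rank self-adjoint perturbation of a self-adjoint operator with finite spectrum still has finite spectrum: each of the finitely many components of $\mathbb R\setminus\sigma(|S|^2)$ contains at most $\mathrm{rank}\,\widetilde F$ points of the new spectrum. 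Lemma \ref{finite spectrum} and Proposition \ref{equivalence} then give $S+F\in\ma_r(H)$.

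\textbf{An elementary alternative.} The principle you stated at the outset handles your delicate point: do not subtract the corner, but keep $|X|=|S|$ and move only the partial isometry $W$ in $S=W|S|$.
\begin{enumerate}
\item If $N(S)$ is infinite-dimensional, Case 1 applies. Otherwise $S$ is left semi-Fredholm, since its range is closed because $\sigma(|S|)$ is finite.
\item Choose $\lambda\neq0$ in the left essential spectrum of $S$; this set contains $\partial\es(S)\neq\emptyset$. Take an orthonormal sequence $(x_n)$ with $\sum_n\|(S-\lambda I)x_n\|^2<\delta^2$, and set $M:=\bigvee\{x_{2n}:n\in\N\}$, which is a half-space.
\item On $M$ one has $\||S|x\|\ge(|\lambda|-\delta)\|x\|$ and $\|P_{M^\perp}Sx\|\le\delta\|x\|$. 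So $W$ maps the closed subspace $|S|(M)$ into $M$ up to an error of $\delta/(|\lambda|-\delta)$.
\item Correcting $W$ by polar decompositions yields a partial isometry $W'$ with $W'^*W'=W^*W$, $W'(|S|(M))\subseteq M$ and $\|W'-W\|=O(\delta)$.
\item Then $X:=W'|S|$ satisfies $|X|=|S|$, hence $X\in\ma_r(H)$. It leaves $M$ invariant, and $\|X-S\|\le\|W'-W\|\,\|S\|$.
\end{enumerate}
Some argument of one of these kinds has to be supplied. As written, the proposal does not prove the corollary.
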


\begin{proof}
Let $T\in\mathcal B(H)$, and choose $\varepsilon>0$. By
Theorem \ref{densitythm}, there exists $S\in\ma_r(H)$ with $\sigma(|S|)$ finite such that
$$
\|T-S\|<\frac{\varepsilon}{2}.
$$
By \cite[Theorem 4.3]{Tcaciuc}, there exists a finite rank operator
$F$ with $\|F\|<\frac{\varepsilon}{2}$
such that $S+F$ has a nontrivial invariant half-space.

We claim that $S+F\in\ma_r(H)$. Indeed,
$$
(S+F)^*(S+F)=|S|^2+ \widetilde{F},
$$
where $\widetilde{F}:=S^*F+F^*S+F^*F\in\mathcal F(H)_{\mathrm{sa}}$.

Since $|S|$ has finite spectrum, so does $|S|^2$, by the spectral mapping theorem. Hence by
\cite[Proposition 2.11]{Beta Jot}, $(S+F)^*(S+F)$ has finite spectrum. Therefore, by Lemma \ref{finite spectrum}, it follows that $(S+F)^*(S+F)\in\ma_r(H)_+$.
Proposition \ref{equivalence}(ii) now yields
$S+F\in\ma_r(H)$.

Finally, we obtain
$$
\|T-(S+F)\|
\leq \|T-S\|+\|F\|
<\frac{\varepsilon}{2}+\frac{\varepsilon}{2}
=\varepsilon.$$
Thus the operators in $\ma_r(H)$ having a  invariant
half-space are norm dense in $\mathcal B(H)$.
\end{proof}

\section{Normal Operators in $\ma_r(H)$}

An operator $T\in\clb(H)$ is said to be quasinormal if $T(T^*T)=(T^*T)T$, equivalently $T|T|=|T|T$. The class of quasinormal operators includes normal operators. We refer to \cite{Furuta} for further details. In this section, we show that if $T\in\ma_r(H)$ is quasinormal, then $|T|\in\ma_r(H)_+$. For normal operators, this leads to a
structural characterization. Also we show that a normal operator in $\ma_r(H)$ has closed range. Finally, we provide the
characterization for multiplication operators on $L^2(\T)$ in terms of its symbol function.

\begin{proposition}\label{quasinormal mod T}
Let $T\in\ma_r(H)$ be quasinormal. Then $|T|\in\ma_r(H)_+$.
\end{proposition}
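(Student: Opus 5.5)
The plan is to reduce everything to the spectral characterization of $\ma_r(H)_+$ from Theorem \ref{+M_r class equiv}. Concretely, I will show that $|T|$ is diagonalizable and that $\sigma_p(|T|)$ is a countable well-ordered subset of $[0,\|T\|]$; Theorem \ref{Red min sufficient} then gives $|T|\in\ma_r(H)_+$.

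\textbf{Reducing subspaces of $|T|$ that also reduce $T$.} The difficulty is that a reducing subspace of $|T|$ need not reduce $T$, so I cannot work with all of $\red_{|T|}$. Instead I only use reducing subspaces built from spectral data of $|T|$. Quasinormality gives $T|T|=|T|T$, and taking adjoints gives $|T|T^*=T^*|T|$. So both $T$ and $T^*$ commute with $|T|$. Hence they commute with every spectral projection $E(\Delta)$ of $|T|$, in particular with each projection onto an eigenspace $N(|T|-\lambda I)$. Therefore every eigenspace of $|T|$, every closed span of a family of eigenspaces, and the orthogonal complement of such a span all belong to $\red_T$.

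\textbf{Attainment gives an eigenvalue.} On any $M\in\red_T$ we have $\|Tx\|=\||T|x\|$ for all $x\in M$, so $m(T_M)=m(|T|_M)$. Suppose in addition that $M$ reduces $|T|$, which holds for the subspaces above. If $T_M$ attains its minimum at a unit vector $x_0$, then Lemma \ref{ma set}, applied to the positive operator $|T|_M$, gives $|T|x_0=m(|T|_M)x_0$. So $x_0$ is an eigenvector of $|T|$ lying in $M$.

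\textbf{Diagonalizability.} Let $\widetilde{\cle}$ be the closed linear span of all eigenspaces of $|T|$; this replaces the Zorn argument of Theorem \ref{diagonalizable}. By the first step, $\widetilde{\cle}^\perp\in\red_T$, and it also reduces $|T|$. If $\widetilde{\cle}^\perp\neq\{0\}$, then $T_{\widetilde{\cle}^\perp}$ is minimum attaining because $T\in\ma_r(H)$. The previous step then produces an eigenvector of $|T|$ inside $\widetilde{\cle}^\perp$, which contradicts the definition of $\widetilde{\cle}$. Thus $H=\bigoplus_{\lambda\in\sigma_p(|T|)}N(|T|-\lambda I)$, and $\sigma_p(|T|)$ is countable by separability.

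\textbf{Well-ordering.} I will mimic the proof of Theorem \ref{self-adjoint}(ii). For a nonempty $A\subseteq\sigma_p(|T|)$, set $M_A=\bigoplus_{\lambda\in A}N(|T|-\lambda I)$, which lies in $\red_T$. The same computation as there gives $m(T_{M_A})=\inf A$. Since $T_{M_A}$ is minimum attaining, the nonnegative-sum argument forces some $\lambda\in A$ with $\lambda=\inf A$, so $\inf A\in A$. Hence $\sigma_p(|T|)$ is well-ordered, and Theorem \ref{Red min sufficient} concludes.

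The main obstacle is the first step: choosing reducing subspaces that reduce both $T$ and $|T|$, since a general $M\in\red_{|T|}$ gives no information from $T\in\ma_r(H)$. The identities $T|T|=|T|T$ and $T^*|T|=|T|T^*$, through the spectral projections of $|T|$, are exactly what make the argument go through.
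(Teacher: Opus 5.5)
Your proof is correct and follows essentially the same route as the paper. Quasinormality makes each eigenspace of $|T|$, and hence spans of eigenspaces and their orthogonal complements, reduce $T$. Minimum attainment on the complement of the eigenvector span then forces $|T|$ to be diagonalizable, minimum attainment on sums of eigenspaces forces $\sigma_p(|T|)$ to be well-ordered, and the sufficiency theorem finishes.

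The differences are minor. You take the closed span of all eigenspaces instead of a Zorn-maximal orthonormal set of eigenvectors. This also makes explicit why that span reduces $T$, a point the paper leaves implicit. You also check the least-element property directly, as in Theorem \ref{self-adjoint}(ii), rather than ruling out a strictly decreasing sequence of eigenvalues.
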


\begin{proof}
Since $T\in\ma_r(H)$ and $H\in\red_T$, by Lemma \ref{ma set}, there exists a nonzero $x\in H$ such that $|T|x=m(T)x$. Choosing $x$ as unit vector, in the similar way of Theorem \ref{diagonalizable}, it can be shown that $|T|$ has a maximal orthonormal set of eigenvectors. Let $\mathcal{E}$ be a maximal orthonormal set consisting of eigenvectors
of $|T|$, and assume that $\widetilde{\cle}=\bigvee \cle$.

We claim that $\widetilde{\cle}=H$, that is, $|T|$ is diagonalizable. If not, for contradiction assume that $\widetilde{\cle}\neq \{0\}$.

Since $T$ is quasinormal, we have $T|T|=|T|T$. Let $x\in\cle$ and suppose that $\lambda$ is the associated eigenvalue, that is,  $|T|x=\lambda x$. Then
$$
|T|(Tx)=T|T|x=\lambda Tx.
$$
Thus $Tx\in N(|T|-\lambda I)$. Similarly, using $T^*|T|=|T|T^*$, we get
$$
T^*x\in N(|T|-\lambda I).
$$
Therefore, the closed span $\widetilde{\cle}$ reduces $T$. Consequently, $\widetilde{\cle}^\perp\in\red_T$. Consider the restriction $S:=T|_{\widetilde{\cle}^\perp}$. Then $S\in\ma(\widetilde{\cle}^\perp)$. Hence there exists a unit vector $y\in\ \widetilde{\cle}^\perp$ such that $|S|y=m(S)y$.  Since $|S|=|T_{\widetilde{\cle}^\perp}|=|T|_{\widetilde{\cle}^\perp}$ by \cite[Lemma 3.4]{betaclass}, it follows that $|T|y=m(S)y$.  Thus $y$ is an eigenvector of $|T|$ lying in $\widetilde{\cle}^\perp$, which contradicts the maximality of $\cle$. Therefore, $\widetilde{\cle}^\perp=\{0\}$, and hence $\widetilde{\cle}=H$. Consequently, $|T|$ is diagonalizable.

\smallskip
Let $\Lambda:=\sigma_p(|T|)$. Since $|T|$ is positive, eigenspaces corresponding to distinct
eigenvalues are mutually orthogonal. Thus we can write $H=\bigoplus_{\lambda\in\Lambda}
N(|T|-\lambda I)$.

Now we show that $\Lambda$ is well-ordered. If not, then there exists a strictly
decreasing sequence
$$
\lambda_1>\lambda_2>\cdots,
\qquad \lambda_n\in\Lambda.
$$
Let $M:=\bigoplus_{n=1}^{\infty}
N(|T|-\lambda_n I)$. Since each eigenspace $N(|T|-\lambda_nI)$ reduces $T$, it follows that $M\in\red_T$. For every $x\in M$ with $\|x\|=1$, write $x=\sum_{n=1}^{\infty}x_n$, where $x_n\in N(|T|-\lambda_nI)$. Then we have
$$
\|Tx\|^2
=\||T|x\|^2
=\sum_{n=1}^{\infty}\lambda_n^2\|x_n\|^2 > (\inf_{n\geq 1} \lambda_n)^2.
$$
On the other hand, by Lemma \ref{m(T) for direct sum operators}, $m(T_M)=m(|T|_M)=\inf_{n\geq 1} \lambda_n$. Thus $\|Tx\|>m(T_M)$ for every unit vector $x\in M$. It follows that $T_M\notin \ma(M)$, which is a contradiction. 

Therefore, $\Lambda$ is a well-ordered subset of $[0,\|T\|]$.
Since $H$ is separable, $\Lambda$ is countable. Hence by Theorem \ref{+M_r class equiv}, it follows that $|T|\in\ma_r(H)_+$.
\end{proof}

\begin{remark}
    Let $T\in\clb(H)$ be a quasinormal operator. Then $T\in\ma_r(H)$ if and only if $|T|\in\ma_r(H)_+$.
\end{remark}

\begin{corollary}\label{normal mod T}
    Let $T\in\clb(H)$ be a normal operator. Then $T\in\ma_r(H)$ if and only if $|T|\in\ma_r(H)_+$.
\end{corollary}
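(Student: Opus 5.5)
This is a direct specialization of the quasinormal case already treated. Every normal operator is quasinormal: $TT^*=T^*T$ gives $T(T^*T)=(TT^*)T=(T^*T)T$, so $T$ commutes with $T^*T$, and by the continuous functional calculus it also commutes with $|T|=(T^*T)^{1/2}$. I will argue the two directions separately.

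For the forward implication, suppose $T\in\ma_r(H)$ is normal. Since $T$ is quasinormal, Proposition \ref{quasinormal mod T} gives $|T|\in\ma_r(H)_+$ directly. For the reverse implication, suppose $|T|\in\ma_r(H)_+$. Proposition \ref{equivalence}(ii) gives $T\in\ma_r(H)$ for any $T\in\clb(H)$, without using normality at all.

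The only point to check is that Proposition \ref{quasinormal mod T} applies verbatim under the normality hypothesis. In particular, its proof uses $T^*|T|=|T|T^*$, which holds here: taking adjoints in $T|T|=|T|T$ and using $|T|^*=|T|$ gives exactly this identity. So there is no real obstacle; the corollary is just the preceding Remark restricted to the normal subclass.
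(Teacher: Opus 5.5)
Your proposal is correct and follows the paper's own route. Normality gives quasinormality, so the forward direction is Proposition \ref{quasinormal mod T}, and the converse is Proposition \ref{equivalence}(ii); this is exactly the Remark following Proposition \ref{quasinormal mod T}, specialized to normal operators.
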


\begin{proposition}\label{Normal T and T*}
Let $T$ be normal. Then $T\in\ma_r(H)$ if and only if $T^*\in\ma_r(H)$.
\end{proposition}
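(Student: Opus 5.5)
Two routes are available, and I would follow the first.

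\emph{Route via WEP.} A normal operator satisfies $\|Tx\|=\|T^*x\|$ for all $x$, so $N(T)=N(T^*)$. Its range also satisfies $R(T)=R(T^*)$, because $|T|=|T^*|$ and the polar decompositions $T=V|T|$, $T^*=V^*|T|$ give matching ranges (more precisely $R(T)=V R(|T|)$, $R(T^*)=V^*R(|T|)$, and for normal $T$ the unitary part commutes with $|T|$, so both equal $R(|T|)$). Hence $T$ is a WEP operator, and the claim is exactly Proposition \ref{WEP T and T* eqv}. This is the same observation the paper already recorded when it noted that weak EP operators include normal operators.

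\emph{Route via Corollary \ref{normal mod T}.} This route avoids citing the WEP range identity. Since $T$ is normal, so is $T^*$. Moreover $|T^*|=(TT^*)^{1/2}=(T^*T)^{1/2}=|T|$. Corollary \ref{normal mod T} applied to $T$ gives $T\in\ma_r(H)$ if and only if $|T|\in\ma_r(H)_+$. Applied to the normal operator $T^*$, it gives $T^*\in\ma_r(H)$ if and only if $|T^*|\in\ma_r(H)_+$. Since $|T^*|=|T|$, the two conditions coincide, and the equivalence follows.

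The only point requiring care is checking that the hypotheses of the cited result are met. For the second route this is that Corollary \ref{normal mod T} genuinely applies to $T^*$, and $T^*$ is normal. That corollary rests on Proposition \ref{quasinormal mod T} and Proposition \ref{equivalence}(ii), both proved above. No obstacle is expected, so I would present the second route as the proof and mention the first as a remark.
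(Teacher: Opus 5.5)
Your proof is correct, but the route you chose to present is not the paper's. The route you relegate to a remark is the paper's.

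The paper observes only that $N(T)=N(T^*)$ and invokes Proposition~\ref{WEP T and T* eqv}. That proposition works one reducing subspace at a time. Each nonzero $M\in\red_T$ also reduces $T^*$, and the kernels $M\cap N(T)$ and $M\cap N(T^*)$ of $T_M$ and $(T_M)^*$ coincide. A cited result on minimum attainment of an operator versus its adjoint, under equal kernels, then gives $T_M\in\ma(M)\iff (T_M)^*\in\ma(M)$. Only the kernel equality is used, so the range identity you verify is not needed. In that verification, $V$ is only a partial isometry, and $V R(|T|)=R(|T|)$ needs $V^*V=VV^*$ in addition to $V|T|=|T|V$. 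It is simpler to use $R(T)=R(|T^*|)=R(|T|)$.

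Your main route applies Corollary~\ref{normal mod T} to $T$ and to the normal operator $T^*$, and uses $|T^*|=|T|$. This is valid and not circular, since that corollary rests only on Propositions~\ref{quasinormal mod T} and \ref{equivalence}(ii).

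The trade-off is as follows. The paper's argument is short and needs no spectral structure. It applies verbatim to every operator with $N(T)=N(T^*)$, but it depends on an outside lemma. Your argument stays inside the paper and identifies both memberships with a single spectral condition on $|T|$: diagonalizable with a countable well-ordered set of eigenvalues. It costs the heavier maximal-eigenbasis and well-ordering argument of Proposition~\ref{quasinormal mod T}. It is also tied to normality, because $|T^*|=|T|$ holds exactly when $T$ is normal.
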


\begin{proof}
    Since $T$ is normal, $N(T)=N(T^*)$. The result follows by Proposition \ref{WEP T and T* eqv}.
\end{proof}

\begin{proposition}\label{normal closed range}
Let $T\in\ma_r(H)$ be normal. Then $T$ has closed range.
\end{proposition}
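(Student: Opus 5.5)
The plan is to pass from $T$ to its modulus and read off closedness of the range from the spectral picture of $|T|$. Since $T$ is normal, Corollary \ref{normal mod T} gives $|T|\in\ma_r(H)_+$. By Theorem \ref{+M_r class equiv}, we can then write
$$|T|=\sum_{\lambda\in\Lambda}\lambda P_\lambda,$$
where $\Lambda=\sigma_p(|T|)$ is a countable well-ordered subset of $[0,\|T\|]$ and $\sigma(|T|)=\overline{\Lambda}$.

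The key step is to show that $0$ is not an accumulation point of $\Lambda$, i.e. that $\Lambda\setminus\{0\}$ is bounded away from $0$.
\begin{enumerate}
\item Suppose $\Lambda\setminus\{0\}$ is nonempty. Since $\Lambda$ is well-ordered, so is $\Lambda\setminus\{0\}$, and it has a least element $\lambda_1>0$.
\item Every $\lambda\in\Lambda\setminus\{0\}$ then satisfies $\lambda\geq\lambda_1$.
\item Hence $\sigma(|T|)=\overline{\Lambda}\subseteq\{0\}\cup[\lambda_1,\|T\|]$, so $0$ is at most an isolated point of $\sigma(|T|)$.
\end{enumerate}
This is the only place where well-ordering is genuinely used: a decreasing sequence of eigenvalues tending to $0$ is exactly what would make the range non-closed.

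Next, observe that $N(|T|)=R(P_0)$ (with $P_0=0$ if $0\notin\Lambda$) and $N(|T|)^\perp=\bigoplus_{\lambda>0}R(P_\lambda)$. For $x\in N(T)^\perp=N(|T|)^\perp$ we have
$$\|Tx\|^2=\||T|x\|^2=\sum_{\lambda>0}\lambda^2\|P_\lambda x\|^2\geq\lambda_1^2\|x\|^2.$$
So $T$ is bounded below on $N(T)^\perp$. An operator bounded below on the orthogonal complement of its kernel has closed range, since $R(T)=T(N(T)^\perp)$.

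For the trivial case, if $\Lambda\subseteq\{0\}$ then $|T|=0$, so $T=0$ and its range is closed. I expect no real obstacle beyond the well-ordering step above; normality is used only to invoke Corollary \ref{normal mod T}, together with $\|Tx\|=\||T|x\|$, which holds for any operator.
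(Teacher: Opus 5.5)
Your proof is correct, but it takes a different and heavier route than the paper.

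The paper argues directly from the definition of $\ma_r(H)$. Since $T$ is normal, $N(T)=N(T^*)$, so $N(T)^\perp$ reduces $T$. The restriction $T_0=T|_{N(T)^\perp}$ is therefore minimum attaining, and it is also injective. Attainment of $m(T_0)$ at a unit vector then forces $m(T_0)>0$. Hence $T_0$ is bounded below and $R(T)=R(T_0)$ is closed. This tests the defining property on a single reducing subspace and needs neither diagonalization nor well-ordering. It works verbatim for any $T\in\ma_r(H)$ whose kernel reduces $T$, for example WEP or quasinormal operators.

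You instead go through Corollary \ref{normal mod T} and Theorem \ref{+M_r class equiv}. These rest on the full diagonalization of $|T|$ and the well-ordering of $\sigma_p(|T|)$. In exchange you get quantitative information the paper does not record:
\begin{itemize}
\item $m(T|_{N(T)^\perp})$ equals exactly the least positive eigenvalue $\lambda_1$ of $|T|$;
\item $|T|$ has the spectral gap $(0,\lambda_1)$.
\end{itemize}
Your remark that well-ordering is ``genuinely used'' is true only of your route. The mere conclusion $m(T_0)>0$ already follows from attainment plus injectivity.

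Once $|T|\in\ma_r(H)_+$ is known, your argument uses nothing about $T$ beyond $\|Tx\|=\||T|x\|$. So it shows closed range for every $T$ with $|T|\in\ma_r(H)_+$. The paper's short argument, applied to $|T|$ on $N(|T|)^\perp$, would give this as well. Your step (3) about $\sigma(|T|)$ is not needed for the conclusion.
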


\begin{proof}
For normal $T$, $N(T)=N(T^*)$, so $N(T)^\perp$ reduces $T$. The restriction
$T_0=T|_{N(T)^\perp}$ is injective and minimum attaining. Hence its minimum
modulus is nonzero, otherwise attainment of $m(T_0)=0$ would contradict
injectivity. Thus $T_0$ is bounded below, and $R(T)=R(T_0)$ is closed.
\end{proof}

\begin{proposition}
Let $T\in\ma_r(H)$ be normal, and let $S$ be an isometry. Then
$$
ST,\quad ST^*,\quad TS^*\in\ma_r(H).
$$
\end{proposition}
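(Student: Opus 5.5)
The plan is to reduce all three statements to the positive case through the modulus. By Proposition \ref{equivalence}(ii), an operator $X$ lies in $\ma_r(H)$ as soon as $|X|\in\ma_r(H)_+$. By Corollary \ref{normal mod T}, the hypothesis gives $|T|\in\ma_r(H)_+$. So it suffices to compute $|ST|$, $|ST^*|$ and $|TS^*|$ and to show that each is a positive operator in $\ma_r(H)$.

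\emph{The operators $ST$ and $ST^*$.} Since $S^*S=I$, we have $(ST)^*(ST)=T^*S^*ST=T^*T$, so $|ST|=|T|\in\ma_r(H)_+$, and hence $ST\in\ma_r(H)$. Similarly, $(ST^*)^*(ST^*)=TT^*$. By normality $TT^*=T^*T$, so $|ST^*|=|T|$ and the same argument applies. Alternatively, we could first pass to $T^*\in\ma_r(H)$ using Proposition \ref{Normal T and T*}.

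\emph{The operator $TS^*$; this is where the real work lies.} Here
\[
(TS^*)^*(TS^*)=ST^*TS^*=S|T|^2S^* .
\]
The operator $S|T|S^*$ is positive, and since $S^*S=I$,
\[
(S|T|S^*)^2=S|T|S^*S|T|S^*=S|T|^2S^* .
\]
By uniqueness of the positive square root, $|TS^*|=S|T|S^*$. It remains to show that $S|T|S^*\in\ma_r(H)_+$.

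By Theorem \ref{+M_r class equiv}, we can write $|T|=\sum_{\lambda\in\Lambda}\lambda P_\lambda$, where $\Lambda\subseteq[0,\|T\|]$ is countable and well-ordered and $\sum_\lambda P_\lambda=I$ strongly. Set $Q_\lambda:=SP_\lambda S^*$.
\begin{enumerate}
\item Since $S^*S=I$, the $Q_\lambda$ are mutually orthogonal projections: $Q_\lambda Q_\mu=SP_\lambda P_\mu S^*$.
\item Their strong sum is $SS^*$, the projection onto $R(S)$.
\item Put $Q_0':=I-SS^*$, the projection onto $R(S)^\perp$.
\end{enumerate}
This gives
\[
S|T|S^*=\sum_{\lambda\in\Lambda}\lambda Q_\lambda+0\cdot Q_0',
\]
with strong convergence inherited from the series for $|T|$ by strong continuity of $X\mapsto SXS^*$. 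The index set is $\Lambda\cup\{0\}$ if $Q_0'\neq0$, and $\Lambda$ otherwise. In the first case, if $0\in\Lambda$ we simply merge $Q_0'$ into $Q_0$. In every case the index set is a bounded countable well-ordered set, because adjoining $0$ adds a least element.

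Theorem \ref{Red min sufficient} then gives $S|T|S^*\in\ma_r(H)_+$. Note that Theorem \ref{Red min sufficient} does not require the projections to be nonzero, and any zero $Q_\lambda$ may be discarded anyway. Therefore $|TS^*|\in\ma_r(H)_+$, and Proposition \ref{equivalence}(ii) yields $TS^*\in\ma_r(H)$.

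The main obstacle is this last case. It requires the identity $|TS^*|=S|T|S^*$ and checking that the range of the isometry only contributes an extra zero eigenvalue, which preserves well-ordering.
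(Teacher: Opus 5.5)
Your proof is correct and follows essentially the same route as the paper: pass to the positive part via Proposition \ref{equivalence}, and for $TS^*$ conjugate the spectral decomposition by $S$, absorbing $I-SS^*$ into the eigenvalue $0$ so that the index set stays well-ordered. The differences are cosmetic. You work with $|TS^*|=S|T|S^*$, obtained from uniqueness of square roots, where the paper works with $(TS^*)^*(TS^*)=ST^*TS^*$. You also treat $ST^*$ directly through $TT^*=T^*T$, where the paper first passes through $T^*\in\ma_r(H)$.
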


\begin{proof}
Since $T\in\ma_r(H)$ is normal, Corollary \ref{Normal T and T*} gives
$T^*\in\ma_r(H)$. By Corollary \ref{normal mod T}, we get $|T|,\ |T^*|\in\ma_r(H)_+$. Hence by Proposition \ref{equivalence}, it follows that $T^*T,\ TT^*\in\ma_r(H)_+$.

Let $X=ST$. Since $S$ is an isometry, $S^*S=I$, and hence
$$
X^*X=T^*S^*ST=T^*T\in\ma_r(H)_+.
$$
Therefore, by Proposition \ref{equivalence}, we obtain $X=ST\in\ma_r(H)$.

\smallskip
Next, assume that $Y=ST^*$. Then
$$
Y^*Y=TS^*ST^*=TT^*\in\ma_r(H)_+.
$$
Again, by Proposition \ref{equivalence}, we obtain $Y=ST^*\in\ma_r(H)$.

\smallskip
It remains to prove that $TS^*\in\ma_r(H)$. Let $Z=TS^*$. Since $T^*T\in\ma_r(H)_+$, by Theorem \ref{+M_r class equiv},
there exist a countable well-ordered set
$\Lambda\subseteq[0,\|T^*T\|]$ and mutually orthogonal projections
$(P_\lambda)_{\lambda\in\Lambda}$ with $\sum_{\lambda\in\Lambda}P_\lambda=I$ such that $T^*T=\sum_{\lambda\in\Lambda}\lambda P_\lambda$ in the strong operator topology.

Now
$$
Z^*Z=ST^*TS^*=\sum_{\lambda\in\Lambda}\lambda Q_\lambda, \text{ where }
Q_\lambda:=SP_\lambda S^*.
$$
Since $S^*S=I$, each $Q_\lambda$ is an orthogonal projection, and
$$
Q_\lambda Q_\mu
=SP_\lambda S^*SP_\mu S^*
=SP_\lambda P_\mu S^*
=0 \text{ for } \lambda\neq\mu.
$$
Moreover, observe that
$$
\sum_{\lambda\in\Lambda}Q_\lambda
=S\left(\sum_{\lambda\in\Lambda}P_\lambda\right)S^*
=SS^*.
$$

Now we distinguish two cases.

\smallskip
\noindent\textbf{Case 1: $0\notin\Lambda$.}
Set $Q_0:=I-SS^*$. Then $Q_0$ is an orthogonal projection and $Q_0Q_\lambda=(I-SS^*)SP_\lambda S^*=0$ for every $\lambda\in\Lambda$. Hence
$(Q_0,Q_\lambda)_{\lambda\in\Lambda}$ is a mutually orthogonal family of
projections and
\[
Q_0+\sum_{\lambda\in\Lambda}Q_\lambda
=I-SS^*+SS^*
=I.
\]
Thus we can write
\[
Y^*Y
=\sum_{\lambda\in\Lambda}\lambda Q_\lambda.
\]
Since $\{0\}\cup\Lambda$ is countable and well-ordered, Theorem \ref{+M_r class equiv} yields $Y^*Y\in\ma_r(H)_+$.

\smallskip
\noindent\textbf{Case 2: $0\in\Lambda$.}
Define $\widetilde Q_0:=SP_0S^*+I-SS^*$ and $\widetilde Q_\lambda:=SP_\lambda S^*$ for $\lambda\in\Lambda\setminus\{0\}$.
Since $(I-SS^*)SP_0S^*=0$, we get
$\widetilde Q_0$ is an orthogonal projection. Furthermore,
the family $(\widetilde Q_\lambda)_{\lambda\in\Lambda}$ is mutually
orthogonal and
$$
\sum_{\lambda\in\Lambda}\widetilde Q_\lambda
=I-SS^*+\sum_{\lambda\in\Lambda}SP_\lambda S^*
=I.
$$
Since the coefficient corresponding to $\widetilde Q_0$ is zero,
we can write $Z^*Z
=\sum_{\lambda\in\Lambda}\lambda\widetilde Q_\lambda$.
As $\Lambda$ is countable and well-ordered, by Theorem \ref{+M_r class equiv}, we obtain $Z^*Z\in\ma_r(H)_+$.

In either case, Proposition \ref{equivalence} implies $Z=TS^*\in\ma_r(H)$.
\end{proof}

\begin{theorem}\label{normalrepn}
    Let $T\in\clb(H)$ be a normal operator. Then the following are equivalent:
    \begin{enumerate}
        \item $T\in\ma_r(H)$;

        \item There exist a sequence of pairwise orthogonal reducing subspaces $(H_a)_{a\in\Lambda}$ for $T$ and unitary operators $U_a\in\clb(H_a)$ such that $$H=\bigoplus_{a\in\Lambda} H_a \text{ and }\; T=\bigoplus_{a\in\Lambda}a\,U_a,$$ where $\Lambda:=\sigma_p(|T|)\subseteq[0,\|T\|]$ is a countable well-ordered set.
    \end{enumerate}
\end{theorem}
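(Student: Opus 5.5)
For (i)$\Rightarrow$(ii), I will use Corollary \ref{normal mod T}: since $T$ is normal and $T\in\ma_r(H)$, we have $|T|\in\ma_r(H)_+$. Theorem \ref{+M_r class equiv} then gives $|T|=\sum_{a\in\Lambda}aP_a$, where $\Lambda=\sigma_p(|T|)$ is a countable well-ordered subset of $[0,\|T\|]$. Here $P_a$ is the projection onto $H_a:=N(|T|-aI)$, and $\sum P_a=I$ strongly.

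Normality gives $T|T|=|T|T$ and $T^*|T|=|T|T^*$, so each $H_a$ reduces $T$, and $H=\bigoplus_{a\in\Lambda}H_a$. Put $T_a:=T|_{H_a}$. Then $T_a$ is normal on $H_a$, and $|T_a|=|T|\,|_{H_a}=aI_{H_a}$, using \cite[Lemma 3.4]{betaclass} or directly since $H_a$ reduces $T^*T$.

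If $a>0$, set $U_a:=a^{-1}T_a$. Then $U_a^*U_a=a^{-2}|T_a|^2=I$, and by normality $U_aU_a^*=U_a^*U_a=I$, so $U_a$ is unitary and $T_a=aU_a$. If $a=0\in\Lambda$, then $|T_0|=0$ forces $T_0=0$, and we may write $T_0=0\cdot I_{H_0}$ with $U_0:=I_{H_0}$. Assembling these pieces gives $T=\bigoplus_a aU_a$.

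For (ii)$\Rightarrow$(i), I start from $T=\bigoplus_a aU_a$, which gives $T^*T=\bigoplus_a a^2 I_{H_a}$. Hence $|T|=\sum_{a\in\Lambda}aP_a$, with $P_a$ the projection onto $H_a$ and $\sum P_a=I$. The only needed input is that $\Lambda$ is a bounded countable well-ordered subset of $[0,\infty)$. Theorem \ref{Red min sufficient} then yields $|T|\in\ma_r(H)_+$, and Proposition \ref{equivalence}(ii) gives $T\in\ma_r(H)$.

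There is a minor consistency check: the statement says $\Lambda=\sigma_p(|T|)$, which is automatic when the $H_a$ are nonzero, since $|T|$ restricted to $H_a$ is $aI$. I will assume the $H_a$ are nonzero, as is implicit in the statement.

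The main obstacle is the forward direction, namely that the eigenspaces of $|T|$ reduce $T$ and that each restriction is exactly $a$ times a unitary. The key point there is that a normal operator $X$ with $X^*X=I$ is unitary. The well-ordering itself is supplied entirely by Corollary \ref{normal mod T} together with Theorem \ref{+M_r class equiv}.
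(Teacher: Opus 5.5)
Your proposal is correct and follows essentially the same route as the paper: Corollary~\ref{normal mod T} with Theorem~\ref{+M_r class equiv} gives the eigenspace decomposition of $|T|$, normality makes each $H_a$ reducing, and the converse goes through Theorem~\ref{Red min sufficient}. The only cosmetic differences are that you set $U_a=a^{-1}T|_{H_a}$ directly instead of restricting the polar-decomposition partial isometry $V$ to $H_a$ (the two coincide, and yours makes explicit that $U_a$ maps $H_a$ into itself), and that you finish the converse with Proposition~\ref{equivalence}(ii) rather than Corollary~\ref{normal mod T}.
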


\begin{proof}
   (i)$\implies$(ii): Since $T$ is normal and $T\in\ma_r(H)$, Corollary \ref{normal mod T} gives $|T|\in\ma_r(H)_+$. Therefore, by Theorem \ref{+M_r class equiv}, there exists a countable well-ordered set
$\Lambda\subseteq[0,\infty)$ and mutually orthogonal projections
$(P_a)_{a\in\Lambda}$ with $\sum_{a\in\Lambda}P_a=I$ such that
$|T|=\sum_{a\in\Lambda}aP_a$ in the strong operator topology. It is easy to observe that $\Lambda=\sigma_p(|T|)$. Let $H_a:=N(|T|-aI)$ for $a\in\Lambda$. Then it follows that $H=\bigoplus_{a\in\Lambda} H_a$.

Normality of $T$ gives us $T|T|=|T|T$ and $T^*|T|=|T|T^*$. Consequently, it follows that for each $a\in\Lambda$, $H_a$ reduces $T$.

\smallskip
Let $T=V|T|$ be the polar decomposition of $T$. 

\smallskip
\noindent\textbf{Case 1: $a=0$.} In this case, $H_0=N(|T|)=N(T)$, and hence $T|_{H_0}=0$. In this case, we can define $U_0=I|_{H_0}$.

\smallskip
\noindent\textbf{Case 2: $a>0$.} Then for every $x\in H_a$, we have $Tx=V|T|x=aVx$. Moreover, $H_a\subseteq N(T)^\perp$, so $V|_{H_a}$ is an isometry on $H_a$. Since $T|_{H_a}$ is normal, we obtain $V|_{H_a}$ is a normal isometry. Consequently, $V|_{H_a}$ is unitary. Taking $U_a:=V|_{H_a}$, we can write $$T|_{H_a}=aU_a,\quad a>0.$$
Therefore, combining above two cases we obtain
$$
T=\bigoplus_{a\in\Lambda}a\,U_a.
$$
\medskip

(ii)$\implies$(i): Assume that (ii) holds. Then by Theorem \ref{Red min sufficient}, it follows that $|T|\in\ma_r(H)_+$. By Corollary \ref{normal mod T}, we obtain $T\in\ma_r(H)$.
\end{proof}

Next, we provide a necessary and sufficient condition for the multiplication operator $M_\vp$ on $L^2(\T)$ induced by $\vp\in L^\infty(\T)$ to be in $\ma_r(L^2(\T))$.

\begin{theorem}\label{thm:multiplication_op}
Let $\varphi \in L^\infty(\mathbb{T})$. Then the following statements are equivalent:
\begin{enumerate}
    \item $M_\varphi \in \mathcal{M}_r(L^2(\mathbb{T}))$;
    \item There exist a countable well-ordered set $\Lambda \subseteq [0,\|\vp\|_{\infty}]$, a family $\{E_\lambda\}_{\lambda \in \Lambda}$ of pairwise essentially disjoint measurable subsets of $\,\mathbb{T}$ satisfying $\mu(E_\lambda) > 0$ for every $\lambda \in \Lambda$ with $\mu\left(\mathbb{T} \setminus \bigcup_{\lambda \in \Lambda} E_\lambda\right) = 0$, and a function $\omega \in L^\infty(\mathbb{T})$ with $|\omega| = 1$ a.e. on $\mathbb{T}$ such that
    $$
    \varphi = \omega \sum_{\lambda \in \Lambda} \lambda \chi_{E_\lambda} \quad \text{a.e. on } \mathbb{T}.
    $$
\end{enumerate}
\end{theorem}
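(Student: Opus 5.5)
The plan is to reduce everything to the modulus. Since $M_\varphi$ is normal, Corollary \ref{normal mod T} gives $M_\varphi\in\ma_r(L^2(\T))$ if and only if $|M_\varphi|\in\ma_r(L^2(\T))_+$. We have $M_\varphi^*M_\varphi=M_{|\varphi|^2}$, so $|M_\varphi|=M_{|\varphi|}$. By Theorem \ref{+M_r class equiv}, the problem therefore becomes: the positive multiplication operator $M_{|\varphi|}$ has a diagonal representation $\sum_{\lambda\in\Lambda}\lambda P_\lambda$ with $\Lambda$ countable and well-ordered if and only if $|\varphi|=\sum_{\lambda}\lambda\chi_{E_\lambda}$ a.e. for some partition $\{E_\lambda\}$ as in (ii).

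For (i)$\Rightarrow$(ii), take $\Lambda=\sigma_p(M_{|\varphi|})$ with eigenprojections $P_\lambda$; this set is countable and well-ordered, and $\Lambda\subseteq[0,\||\varphi|\|_\infty]$. The key step is to identify eigenspaces of a multiplication operator. If $M_{|\varphi|}f=\lambda f$, then $(|\varphi|-\lambda)f=0$ a.e., so $f$ is supported in $E_\lambda:=\{|\varphi|=\lambda\}$. Conversely, every $f\in L^2$ supported on $E_\lambda$ is an eigenvector. Hence $N(M_{|\varphi|}-\lambda I)=L^2(E_\lambda)$, $P_\lambda=M_{\chi_{E_\lambda}}$, and $\mu(E_\lambda)>0$ because the eigenspace is nonzero. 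The sets $E_\lambda$ are disjoint by construction. Since $\sum P_\lambda=I$ strongly, applying this to the constant $1$ gives $\sum_\lambda\chi_{E_\lambda}=1$ in $L^2$, so $\mu(\T\setminus\bigcup E_\lambda)=0$. This is the one point needing care: it uses countability of $\Lambda$ to keep the union measurable and the sum meaningful. Then $|\varphi|=\sum\lambda\chi_{E_\lambda}$ a.e. Next define $\omega=\varphi/|\varphi|$ where $\varphi\neq0$ and $\omega=1$ on $\{\varphi=0\}$. This $\omega$ is measurable, unimodular and in $L^\infty$, and $\varphi=\omega|\varphi|$ a.e., which is (ii).

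For (ii)$\Rightarrow$(i), the hypothesis $|\omega|=1$ gives $|\varphi|=\sum\lambda\chi_{E_\lambda}$ a.e. Set $P_\lambda=M_{\chi_{E_\lambda}}$. These are mutually orthogonal projections by essential disjointness, nonzero since $\mu(E_\lambda)>0$, and $\sum P_\lambda=I$ strongly by dominated convergence, using that the union has full measure. For every $f$, the series $\sum_\lambda\lambda\chi_{E_\lambda}f$ converges in $L^2$ to $|\varphi|f$, by dominated convergence with bound $\|\varphi\|_\infty|f|$. Hence $M_{|\varphi|}=\sum\lambda P_\lambda$, and Theorem \ref{Red min sufficient} gives $M_{|\varphi|}\in\ma_r(L^2(\T))_+$. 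Corollary \ref{normal mod T} then gives $M_\varphi\in\ma_r(L^2(\T))$.

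The main obstacle is the identification $N(M_{|\varphi|}-\lambda I)=L^2(E_\lambda)$ together with the a.e. reconstruction of $|\varphi|$ from the strong sum. Both are routine measure-theoretic facts but need to be stated cleanly. The choice of a measurable unimodular $\omega$ on the zero set of $\varphi$ is a minor point, handled by setting $\omega=1$ there.
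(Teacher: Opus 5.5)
Your proposal is correct and follows essentially the same route as the paper. Both arguments reduce to $M_{|\varphi|}$ via normality (Corollary~\ref{normal mod T}), identify the eigenprojections as $M_{\chi_{E_\lambda}}$ with $E_\lambda=\{|\varphi|=\lambda\}$, and for the converse apply Theorem~\ref{Red min sufficient} to $\sum_\lambda \lambda M_{\chi_{E_\lambda}}$. The differences are cosmetic and all valid: you test $\sum P_\lambda=I$ on the constant $1$ rather than on $\chi_A$, read off $|\varphi|=\sum\lambda\chi_{E_\lambda}$ directly from the level sets, and build the unimodular $\omega$ explicitly.
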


\begin{proof}
(i)$\implies$(ii): Let $M_\varphi \in \mathcal{M}_r(L^2(\mathbb{T}))$. Since $M_\varphi$ is a normal operator, by Corollary \ref{normal mod T}, $M_{|\varphi|} \in \mathcal{M}_r(L^2(\mathbb{T}))_+$. Then by Theorem \ref{Red Min Necessary}, we have
$$
M_{|\varphi|} = \sum_{\lambda \in \Lambda} \lambda P_\lambda,
$$
where $\Lambda := \sigma_p(M_{|\varphi|}) \subseteq [0, \|\vp\|_{\infty}]$ is a countable well-ordered set, and $(P_\lambda)_{\lambda \in \Lambda}$ is a family of mutually orthogonal projections satisfying $\sum_{\lambda \in \Lambda} P_\lambda = I$ in the strong operator topology.

\smallskip
Clearly, for each $\lambda \in \Lambda$, $P_\lambda$ is the orthogonal projection onto the eigenspace $H_\lambda:= \ker(M_{|\varphi|} - \lambda I)$. 
Define the measurable set $E_\lambda = \{x \in \mathbb{T} : |\varphi(x)| = \lambda\}$. For any $f \in L^2(\mathbb{T})$,
\begin{align*}
f \in H_\lambda &\iff (|\varphi| - \lambda)f = 0 \quad \text{a.e. on } \mathbb{T} \\
&\iff f = 0 \quad \text{a.e. on } \mathbb{T} \setminus E_\lambda \\
&\iff f \in L^2(E_\lambda).
\end{align*}
Thus $R(P_\lambda) = H_\lambda = L^2(E_\lambda) = R(M_{\chi_{E_\lambda}})$, which implies $P_\lambda = M_{\chi_{E_\lambda}}$ for each $\lambda \in \Lambda$.

Since $P_\lambda \neq 0$, we have $\mu(E_\lambda) > 0$. Further, $P_\lambda P_\mu = 0$ for $\lambda \neq \mu$ implies that $\mu(E_\lambda \cap E_\mu) = 0$, that is, $E_\lambda\cap E_\mu=\emptyset$ a.e. Consider $A := \mathbb{T} \setminus \bigcup_{\lambda \in \Lambda} E_\lambda$ and $f := \chi_A \in L^2(\mathbb{T})$. Since $\sum_{\lambda \in \Lambda} P_\lambda = I$, we have
$$
\chi_A = \sum_{\lambda \in \Lambda} M_{\chi_{E_\lambda}} \chi_A = \sum_{\lambda \in \Lambda} \chi_{E_\lambda \cap A} = 0 \quad \text{a.e. on } \mathbb{T},
$$
and hence $\mu(A) = 0$. Now observe that
$$
M_{|\varphi|} = \sum_{\lambda \in \Lambda} \lambda M_{\chi_{E_\lambda}} = M_{\sum_{\lambda \in \Lambda} \lambda \chi_{E_\lambda}}.
$$
Therefore, $|\varphi| = \sum_{\lambda \in \Lambda} \lambda \chi_{E_\lambda}$ a.e. on $\mathbb{T}$. By the polar decomposition of $\varphi$, there exists $\omega \in L^\infty(\mathbb{T})$ satisfying $|\omega| = 1$ a.e. on $\mathbb{T}$ such that
$$
\varphi = \omega |\varphi| = \omega \sum_{\lambda \in \Lambda} \lambda \chi_{E_\lambda} \quad \text{a.e. on } \mathbb{T}.
$$

(ii)$\implies$(i): Assume that (ii) holds. Then $|\varphi| = \sum_{\lambda \in \Lambda} \lambda \chi_{E_\lambda}$, and hence $M_{|\varphi|} = \sum_{\lambda \in \Lambda} \lambda M_{\chi_{E_\lambda}}$. Since $\{E_\lambda\}_{\lambda \in \Lambda}$ are pairwise essentially disjoint measurable subsets of $\mathbb{T}$ with positive measure, $M_{\chi_{E_\lambda}}$ are nonzero mutually orthogonal projections. Also, $\mu\left(\mathbb{T} \setminus \bigcup_{\lambda \in \Lambda} E_\lambda\right) = 0$ implies $\sum_{\lambda \in \Lambda} \chi_{E_\lambda} = 1$ a.e. on $\mathbb{T}$. Consequently, we obtain $\sum_{\lambda \in \Lambda} M_{\chi_{E_\lambda}} = I$ in the strong operator topology. By Theorem \ref{Red min sufficient}, it follows that $M_{|\varphi|} \in \mathcal{M}_r(L^2(\mathbb{T}))_+$. Hence by Proposition \ref{equivalence}(ii), we conclude that $M_\varphi \in \mathcal{M}_r(L^2(\mathbb{T}))$.
\end{proof}

\section{General Case}

In general, an operator $T \in \clb(H)$ and its modulus $|T|$ need not share the same collection of reducing subspaces. In this final section, we investigate the general case for operators $T\in\ma_r(H)$ under the structural hypothesis $\red_T = \red_{|T|}$. We first establish that under this condition, an operator in $\ma_r(H)$ is entirely governed by its modulus, showing that $T \in \ma_r(H)$ if and only if $|T| \in \ma_r(H)_+$. Building on this equivalence, we derive a complete structural characterization of such operators in terms of partial isometries. Finally, we derive when the hypothesis $\red_T = \red_{|T|}$ holds by providing a characterization via the von Neumann algebra generated by $|T|$.

\begin{question}
For $T\in\clb(H)$, find the necessary and sufficient conditions for
$\red_T=\red_{|T|}$.
\end{question}

\begin{proposition}\label{T and |T| special relation}
Let $T\in\clb(H)$ satisfy $\red_T=\red_{|T|}$. Then
$$
T\in\ma_r(H)\quad\Longleftrightarrow\quad |T|\in\ma_r(H)_+.
$$
\end{proposition}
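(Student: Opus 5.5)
The reverse implication is already available: Proposition \ref{equivalence}(ii) shows that $|T|\in\ma_r(H)_+$ implies $T\in\ma_r(H)$, and this holds with no hypothesis on reducing subspaces. So the real work is the forward implication. The plan is to show directly from the definition that $|T|_M$ is minimum attaining for every nonzero $M\in\red_{|T|}$, using the hypothesis $\red_T=\red_{|T|}$ to move the question from $|T|$ back to $T$.

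For the forward direction, assume $T\in\ma_r(H)$ and fix a nonzero $M\in\red_{|T|}$. By hypothesis $M\in\red_T$, so $T_M\in\ma(M)$. Choose a unit vector $x_0\in M$ with $\|T_Mx_0\|=m(T_M)$.

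The key identity is $\|Tx\|=\||T|x\|$ for every $x\in H$. Since $M$ reduces both $T$ and $|T|$, we have $T_Mx=Tx$ and $|T|_Mx=|T|x$ for $x\in M$. Hence
$$m(T_M)=\inf_{x\in S_M}\|Tx\|=\inf_{x\in S_M}\||T|x\|=m(|T|_M).$$
It follows that $\||T|_Mx_0\|=\|Tx_0\|=m(|T|_M)$, so $|T|_M\in\ma(M)$. Since $M$ was an arbitrary nonzero element of $\red_{|T|}$, this gives $|T|\in\ma_r(H)_+$.

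I expect no real obstacle here. The only point needing care is that the hypothesis is used in exactly one direction: $\red_{|T|}\subseteq\red_T$. The opposite inclusion $\red_T\subseteq\red_{|T|}$ always holds and is what makes the reverse implication work unconditionally. As an optional check, one could also observe via Lemma \ref{ma set} that $x_0$ satisfies $|T|x_0=m(T_M)x_0$, which confirms the attainment through eigenvectors, consistent with Proposition \ref{eigenvalue}.
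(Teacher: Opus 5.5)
Your proof is correct and takes essentially the paper's approach: on a nonzero $M$ reducing both $T$ and $|T|$, the identity $\|Tx\|=\||T|x\|$ gives $m(T_M)=m(|T|_M)$, so minimum attainment transfers between $T_M$ and $|T|_M$. The only difference is presentational: you obtain the reverse implication from Proposition~\ref{equivalence}(ii), correctly noting that $\red_T\subseteq\red_{|T|}$ always holds, while the paper runs the same one-line argument for both directions at once.
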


\begin{proof}
Let $M\neq\{0\}$ be a reducing subspace for $T$. By hypothesis,
$M$ also reduces $|T|$. Moreover, for every $x\in M$, we have $\|Tx\|=\||T|x\|$. Consequently, $m(T_M)=m(|T|_M)$. Thus $T_M$ attains its minimum if and only if $|T|_M$ attains its minimum. Since $M$ is arbitrary, the desired equivalence follows.
\end{proof}

\begin{theorem}\label{general case}
Let $T\in\clb(H)$ such that $\red_T=\red_{|T|}$. Then the following are equivalent:
\begin{enumerate}
    \item $T\in\ma_r(H)$;

    \item There exist a countable well-ordered set
    $\Lambda\subseteq[0,\|T\|]$, a family of mutually orthogonal
    projections $(P_\lambda)_{\lambda\in\Lambda}$, and a family of
    partial isometries $(V_\lambda)_{\lambda\in{\Lambda\setminus \{0\}}}$ with $\sum_{\lambda\in\Lambda}P_\lambda=I$ and $V_\mu^*V_\lambda
    =\delta_{\mu\lambda}P_\lambda$ for $\lambda,\mu\in\Lambda\setminus\{0\}$ such that
    \begin{equation}
    T=\sum_{\lambda\in\Lambda\setminus\{0\}}\lambda V_\lambda
    \end{equation}
    in the strong operator topology.
\end{enumerate}
\end{theorem}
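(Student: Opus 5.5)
The plan is to pass everything through Proposition \ref{T and |T| special relation}: under $\red_T=\red_{|T|}$ we have $T\in\ma_r(H)$ if and only if $|T|\in\ma_r(H)_+$. Theorem \ref{+M_r class equiv} describes the latter condition spectrally. The polar decomposition $T=V|T|$ then converts the spectral form of $|T|$ into the stated partial isometry form.

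\textbf{(i)$\Rightarrow$(ii).} First, by Proposition \ref{T and |T| special relation}, $|T|\in\ma_r(H)_+$. Theorem \ref{+M_r class equiv} then gives a countable well-ordered $\Lambda=\sigma_p(|T|)\subseteq[0,\||T|\|]=[0,\|T\|]$ and mutually orthogonal projections $P_\lambda$ (onto $N(|T|-\lambda I)$) with $\sum P_\lambda=I$ and $|T|=\sum\lambda P_\lambda$ strongly. Let $T=V|T|$ be the polar decomposition. For $\lambda\in\Lambda\setminus\{0\}$, set $V_\lambda:=VP_\lambda$.

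Since $R(P_\lambda)\subseteq N(|T|)^\perp=N(T)^\perp$ for $\lambda>0$, and $V$ is isometric on $N(T)^\perp$, we get $V^*VP_\lambda=P_\lambda$. Hence
\[
V_\mu^*V_\lambda=P_\mu V^*VP_\lambda=P_\mu P_\lambda=\delta_{\mu\lambda}P_\lambda .
\]
In particular each $V_\lambda$ is a partial isometry with initial projection $P_\lambda$. Finally, for $x\in H$,
\[
Tx=V|T|x=V\sum_{\lambda}\lambda P_\lambda x=\sum_{\lambda\neq0}\lambda VP_\lambda x .
\]
Here we use continuity of $V$ and the fact that the $\lambda=0$ term vanishes. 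This gives the required strong-operator-topology series.

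\textbf{(ii)$\Rightarrow$(i).} Given the representation, we compute $T^*T$. The partial sums $T_F=\sum_{\lambda\in F}\lambda V_\lambda$ over finite $F$ satisfy
\[
T_F^*T_F=\sum_{\lambda,\mu\in F}\lambda\mu V_\mu^*V_\lambda=\sum_{\lambda\in F}\lambda^2P_\lambda .
\]
Hence $\|T_Fx\|^2=\sum_{\lambda\in F}\lambda^2\|P_\lambda x\|^2$.

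Since $T_Fx\to Tx$ strongly, we get $\langle T^*Tx,x\rangle=\sum_{\lambda\neq 0}\lambda^2\|P_\lambda x\|^2$. Polarizing gives $T^*T=\sum_{\lambda\in\Lambda}\lambda^2P_\lambda$ strongly, where the $\lambda=0$ term is zero. Therefore $|T|=\sum_{\lambda\in\Lambda}\lambda P_\lambda$ by uniqueness of the positive square root; one checks directly that the diagonal operator $\sum\lambda P_\lambda$ squares to $T^*T$.

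We must discard any $P_\lambda=0$, since Theorem \ref{Red min sufficient} tacitly uses nonzero projections, or at least its proof only needs $\Lambda_M$ nonempty. A subset of a well-ordered countable set is still well-ordered and countable. Theorem \ref{Red min sufficient} then gives $|T|\in\ma_r(H)_+$, and Proposition \ref{equivalence}(ii) (or Proposition \ref{T and |T| special relation}) yields $T\in\ma_r(H)$. Note that this direction does not even use the hypothesis $\red_T=\red_{|T|}$.

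The main obstacle is the convergence bookkeeping in (ii)$\Rightarrow$(i). We must justify that the strong convergence of $\sum\lambda V_\lambda$ combined with the relations $V_\mu^*V_\lambda=\delta_{\mu\lambda}P_\lambda$ really yields $T^*T=\sum\lambda^2P_\lambda$. I would handle this via the quadratic form identity above, together with the orthogonality of the ranges $R(V_\lambda)$. That orthogonality follows from $V_\mu^*V_\lambda=0$, which makes the sum an orthogonal series.
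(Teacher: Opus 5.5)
Your proof is correct and follows the paper's own route. For (i)$\Rightarrow$(ii) you use Proposition \ref{T and |T| special relation}, Theorem \ref{+M_r class equiv}, and $V_\lambda:=VP_\lambda$ from the polar decomposition; for the converse you compute $T^*T=\sum_{\lambda}\lambda^2P_\lambda$ and conclude with Proposition \ref{equivalence}. Your quadratic-form justification of that computation, and passing through $|T|$ and Theorem \ref{Red min sufficient} instead of applying Theorem \ref{+M_r class equiv} to $T^*T$, only make the paper's formal double-sum manipulation more careful.
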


\begin{proof}
(i)$\implies$(ii): If $T\in\ma_r(H)$, by Proposition \ref{T and |T| special relation}, we have $|T|\in\ma_r(H)_+$. Hence by Theorem \ref{+M_r class equiv}, there exist a bounded countable well-ordered set
$\Lambda\subseteq[0,\|T\|]$ and mutually orthogonal projections
$(P_\lambda)_{\lambda\in\Lambda}$ with
$\sum_{\lambda\in\Lambda}P_\lambda=I$ such that $|T|=\sum_{\lambda\in\Lambda}\lambda P_\lambda$ in the strong operator topology. 

Let $T=W|T|$ be the polar decomposition of $T$. For $\lambda>0$, observe that
$R(P_\lambda)\subseteq N(T)^\perp$. Define $V_\lambda:=WP_\lambda$ for $\lambda\in\Lambda\setminus\{0\}$. Then for each $\lambda>0$, $V_\lambda$ is a partial isometry on $H$ with initial space $R(P_\lambda)$. 

\smallskip
Since $W^*W$ is an orthogonal projection onto $N(T)^\perp$, it follows that 
\begin{equation*}
V_\mu^*V_\lambda=P_\mu W^*WP_\lambda=P_\mu P_\lambda=\delta_{\mu\lambda}P_\lambda\, \text{ for }\, \lambda,\mu\in\Lambda\setminus\{0\}.
\end{equation*}
Therefore, we obtain $T=W|T|=\sum_{\lambda\in\Lambda\setminus\{0\}}\lambda V_\lambda$.
\medskip

(ii)$\implies$(i): Assume that (ii) holds. Then 
\begin{align*}
T^*T&=\left(\sum_{\mu\in\Lambda\setminus\{0\}}\mu V_\mu\right)^*
\left(\sum_{\lambda\in\Lambda\setminus\{0\}}\lambda V_\lambda\right)\\
&=\sum_{\mu,\lambda\in\Lambda\setminus\{0\}}
\mu\lambda V_\mu^*V_\lambda\\
&=\sum_{\lambda\in\Lambda\setminus\{0\}}\lambda^2P_\lambda.
\end{align*}
By Theorem \ref{+M_r class equiv} and Proposition \ref{equivalence}, it follows that $T\in\ma_r(H)$.
\end{proof}
\medskip

Next, we recall some basic notions concerning von Neumann algebras that will be used
in the sequel. For a subset $\mathcal{S}$ of $\mathcal{B}(H)$, the
\emph{commutant} of $\mathcal{S}$ is defined by
$$
\mathcal{S}'
=
\{A\in\mathcal{B}(H): AS=SA\ \text{for every } S\in\mathcal{S}\}.
$$
The \emph{bicommutant} of $\mathcal{S}$ is given by $\mathcal{S}''=(\mathcal{S}')'$.
In particular, for $T\in\mathcal{B}(H)$, we write
$$
\{T\}'=\{A\in\mathcal{B}(H): AT=TA\} \text{ and } \{T\}''=(\{T\}')'.
$$

A unital $*$-subalgebra $\mathcal{A}$ of $\mathcal{B}(H)$ is called a
\emph{von Neumann algebra} if it is closed in the weak operator topology.
By the von Neumann bicommutant theorem, a unital $*$-subalgebra
$\mathcal{A}$ of $\mathcal{B}(H)$ is a von Neumann algebra if and only if $\mathcal{A}=\mathcal{A}''$.
The von Neumann algebra generated by a normal operator $T\in\mathcal{B}(H)$,
denoted by $W^*(T)$, is the smallest von Neumann algebra containing $T$.
Equivalently, $W^*(T)=\{T\}''$. For further details, see \cite{Murphy,KeheZhu}.

We shall use these notions to characterize the condition
$\mathcal{R}_T=\mathcal{R}_{|T|}$ in terms of the partial isometry
appearing in the polar decomposition of $T$.

\begin{theorem}
    Let $T \in B(H)$ with polar decomposition $T = V|T|$. Then $\red_T=\red_{|T|}$ if and only if $V\in W^*(|T|)=\{|T|\}^{''}$. 
\end{theorem}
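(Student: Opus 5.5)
The plan is to identify reducing subspaces with projections in commutants: a closed subspace $M$ reduces an operator $A$ if and only if $P_M$ commutes with both $A$ and $A^*$. For the positive operator $|T|$, this says $\red_{|T|}$ corresponds exactly to the projections in $\{|T|\}'$. Both directions will then reduce to showing that certain projections commute with the partial isometry $V$.

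\emph{Sufficiency.} Assume $V\in\{|T|\}''$. We already know $\red_T\subseteq\red_{|T|}$, so only the reverse inclusion is needed. Let $M\in\red_{|T|}$ and $P=P_M$. Then $P\in\{|T|\}'$, and hence $P$ commutes with $V$. Since $P$ is self-adjoint, it also commutes with $V^*$. Therefore $P$ commutes with $T=V|T|$ and with $T^*=|T|V^*$, so $M\in\red_T$.

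\emph{Necessity.} Assume $\red_T=\red_{|T|}$. The first step is to show that every projection $P\in\{|T|\}'$ commutes with $V$.
\begin{itemize}
\item By hypothesis, $P$ corresponds to a reducing subspace of $T$, so $PT=TP$ and $PT^*=T^*P$.
\item Put $U:=2P-I$, a self-adjoint unitary. Then $UTU^*=T$ and $U|T|U^*=|T|$.
\item This gives $T=(UVU^*)|T|$, where $UVU^*$ is a partial isometry with initial space $U(N(T)^\perp)$.
\item Since $P$ commutes with $|T|$, it preserves $N(|T|)=N(T)$, so $U(N(T)^\perp)=N(T)^\perp$.
\item By uniqueness of the polar decomposition, $UVU^*=V$, hence $PV=VP$.
\end{itemize}
Alternatively, one can use that $V$ is the strong limit of $T(|T|+\tfrac1n I)^{-1}$ on $N(T)^\perp$ and vanishes on $N(T)$. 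Every such approximant commutes with $P$, and so does the projection onto $N(T)$.

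The second step is to pass from projections to all of $\{|T|\}'$. The commutant $\{|T|\}'$ is a von Neumann algebra, since $|T|$ is self-adjoint. Every element of it is a combination of two self-adjoint elements of $\{|T|\}'$. Each self-adjoint element $A$ there is a norm limit of finite linear combinations of its spectral projections, as in Lemma \ref{self approx}. These spectral projections lie in $\{A\}''\subseteq\{|T|\}'$. Hence $\{|T|\}'$ is the norm-closed span of its projections. Since $V$ commutes with each of these projections and commutation passes to norm limits, $V$ commutes with all of $\{|T|\}'$. Thus $V\in\{|T|\}''=W^*(|T|)$.

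The main obstacle is the first step of necessity: deducing $PV=VP$ from $PT=TP$ and $PT^*=T^*P$. The key point is that a reducing projection of $T$ must preserve $N(T)$, so that uniqueness of the polar decomposition can be applied. The spectral-projection density argument in the second step is standard.
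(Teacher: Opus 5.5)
Your proof is correct, and it has the same overall structure as the paper's. The sufficiency direction is the same argument. For necessity, you and the paper both show that $V$ commutes with every projection $P\in\{|T|\}'$, and then extend to all of $\{|T|\}'$ using density of spans of projections.

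The main difference is how $PV=VP$ is obtained. The paper computes directly: $PT=TP$ gives $(PV-VP)|T|=0$, so $PV=VP$ on $\overline{R(|T|)}=N(|T|)^\perp$. On $N(|T|)=N(V)$ both sides vanish, because $P$ maps $N(|T|)$ into itself. You instead conjugate by the symmetry $U=2P-I$ and appeal to uniqueness of the polar decomposition.

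Both arguments rest on the same fact, namely that $P$ preserves $N(T)$. The paper's version is more elementary and needs nothing beyond the defining property $N(V)=N(|T|)$. Yours is more conceptual: the same reasoning shows that every unitary commuting with $T$ commutes with $V$. Hence $V\in W^*(T)$ always, and this exposes the structure behind the theorem. The hypothesis $\red_T=\red_{|T|}$ says that the von Neumann algebras $\{T,T^*\}'$ and $\{|T|\}'$ have the same projections, hence coincide, so $W^*(T)=W^*(|T|)$.

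For the density step, the paper cites the strong-operator density of spans of projections in $\{|T|\}'$. You derive the stronger norm density yourself from the spectral approximation in Lemma \ref{self approx} and the inclusion $\{A\}''\subseteq\{|T|\}'$, which makes your argument self-contained.
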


\begin{proof}
    Assume that $\red_T=\red_{|T|}$. If $P\in \{|T|\}^{'}$ is any orthogonal projection on $H$, then $R(P)$ reduces $|T|$ as well as $T$. Then we have
    \begin{align*}
        PT&=TP, \text{ that is, } PV|T|=V|T|P \text{ or, } PV|T|=VP|T|,
    \end{align*}
    and hence $(PV-VP)|T|=0$. Therefore, $PV=VP$ on $R(|T|)$, and it can be extended to $\overline{R(|T|)}=N(|T|)^\perp$. 

    For $x\in N(|T|)$, it follows that $|T|Px=P|T|x=0$. Hence $Px\in N(|T|)=N(V)$. Consequently, $VPx=0$ and $PVx=0$. Thus we obtain $PV=VP$ on $H$.

    We have shown that $V$ commutes with every orthogonal projection $P\in\{|T|\}'$.
    Since $\{|T|\}^{'}$ is a von Neumann algebra, by \cite[Theorem 20.3, Page 122]{KeheZhu}, every element of $\{|T|\}'$ is a strong operator limit of finite linear combinations of orthogonal projections from $\{|T|\}'$. Hence $V$ commutes with every $S\in\{|T|\}'$. Thus $V\in \{|T|\}^{''}=W^{*}(|T|)$.
\medskip

    Conversely, let $V\in W^*(|T|)=\{|T|\}^{''}$, the von Neumann algebra generated by $|T|$. Then $VS=SV$ for all $S\in \{|T|\}^{'}$. Let $M\in\red_{|T|}$ and $P_M$ be the orthogonal projection onto $M$. Then $P_M |T|=|T|P_M$, and further we get $P_M\in \{|T|\}^{'}$. Therefore, $VP_M=P_M V$ and $V^*P_M=P_M V^*$. Consequently, $M$ reduces $V$. From the polar decomposition of $T$, it follows that $M$ reduces $T$. Thus we get $\red_{|T|}\subseteq \red_T$. Also we already have $\red_T\subseteq \red_{|T|}$. Combining, we obtain $\red_T=\red_{|T|}$.
\end{proof}
\section*{Acknowledgements}
The authors used ChatGPT (OpenAI) to assist with improving the language and polishing the grammar. All mathematical ideas, results, proofs, and conclusions are the authors' own, and the authors take full responsibility for the contents of the manuscript.

\end{document}